\numberwithin{equation}{section}
\newcommand{\ccombinedarroww}{\stackrel{\rightarrow}{\smash{\sim}\rule{0pt}{0.3ex}}}
\newcommand{\ccombinedarrowss}{\stackrel{\leftrightarrows}{\smash{\sim}\rule{0pt}{0.25ex}}}
\newtheorem{example}{Example}[section]
\newtheorem{thm}{Theorem}[section]
\newtheorem{cor}{Corollary}[section]
\newtheorem{note}{Note}[section]
\newtheorem{pro}{Proposition}[section]
\newtheorem{defn}{Definition}[section]
\newtheorem{lemma}{Lemma}[section]
\newtheorem{observation}{Observation}[section]
\begin{document}
	\vspace{-2cm}
	\markboth{G.Kalaivani and R. Rajkumar}{New matrices for the spectral theory of mixed graphs, Part I}
	\title{\LARGE\bf New matrices for the spectral theory of mixed graphs, part I}
	\author{G. Kalaivani\footnote{e-mail: {\tt kalaivani.slg@gmail.com}}~\footnote{G. Kalaivani is thankful to the University Grants Commission (UGC), Government of India, for the award of UGC-SRF  (NTA Ref. No.: 211610155238)},\ \ \
		R. Rajkumar\footnote{e-mail: {\tt rrajmaths@yahoo.co.in (Corresponding Author)}}\ \\
		{\footnotesize Department of Mathematics, The Gandhigram Rural Institute (Deemed to be University),}\\ \footnotesize{Gandhigram -- 624 302, Tamil Nadu, India}\\[3mm]
	}
	\date{}
	\maketitle
	\begin{abstract}
	In this paper, we introduce a matrix for a mixed graph, called the integrated adjacency matrix.   This matrix uniquely determines a mixed graph, as long as the indices of the matrix are specified. 	 Additionally, we associate an (undirected) graph with each mixed graph, enabling the spectral analysis of the integrated adjacency matrix to connect the structural properties of the mixed graph and its associated graph.  Furthermore,  we define certain mixed graph structures and establish their  relationships to the eigenvalues of the integrated adjacency matrix.\\
	
	\noindent \textbf{Keywords:} Mixed graph, Integrated adjacency matrix, Spectrum \\
	\textbf{Mathematics Subject Classification:}  05C50
	
\end{abstract}

\section{Introduction}

All graphs (directed graphs) considered in this paper may contain multiple loops and multiple edges (multiple directed loops and multiple arcs). A mixed graph generalizes both graphs and directed graphs. That is, a mixed graph may contain multiple loops, multiple edges, multiple directed loops and  multiple arcs. 

A fundamental problem in spectral theory  of mixed graphs  is assigning an appropriate matrix to a mixed graph and analyzing the mixed graph's properties through the eigenvalues of that matrix. To achieve this,  researchers have introduced various  matrices that capture the spectral properties of mixed graphs. 

For a mixed graph having no multiple edges, loops, multiple arcs, directed loops, and digons (i.e., a pair of arcs with the same end vertices but in opposite directions), the following matrices have been explored: the Hermitian adjacency matrix, independently introduced by Liu and Li~\cite{Liu2015hermitian} and Guo and Mohar~\cite{guo2017hermitian};  the Hermitian Laplacian matrix defined by Yu and Qu~\cite{Yu2015hermitian}; the Hermitian quasi-Laplacian matrix defined by Yu et al.~\cite{Yu2017singularity}; the Hermitian normalized Laplacian matrix defined by Yu et al.~\cite{Yu2019Hermitian}; the mixed adjacency matrix defined by Adiga et al.~\cite{Adiga2016mixed};  the $k$-generalized Hermitian adjacency matrix for each positive integers $k$, proposed by Yu et al.~\cite{Yu2023k}; the index weighted Hermitian adjacency matrices defined by Zheng Wang et al.~\cite{Wang2024index}; the $\alpha$-Hermitian adjacency matrix for $|\alpha|=1$ defined by Abudayah et al.~\cite{abudayah2022hermitian}; the $\gamma$-signless Laplacian adjacency matrix introduced by Alomari et al.~\cite{Alomari2022signless}; the Hermitian Laplacian matrix of second kind and the Hermitian (quasi-)Laplacian matrix of second kind introduced by Xiong et al.~\cite{Xiong2023principal}.

For a mixed graph having no directed loops, mixed Laplacian matrix was defined by Bapat et al.~\cite{Bapat1999generalized}. 
For a mixed graph having no loops, directed loops and digons, a generalization of the Hermitian adjacency matrix was introduced by Guo and Mohar~\cite{guo2017hermitian}. 
In the case of a mixed graph where an edge is treated as a digon, the Hermitian adjacency matrix of the second kind was introduced by Mohar~\cite{Mohar2020new}. 
For a mixed graph having no digons, loops and directed loops, another form of Hermitian adjacency matrix was defined by Yuan et al.~\cite{Yuan2022hermitian}.

However, there are certain limitations: (i) the matrix defined in~\cite{Adiga2016mixed} is not symmetric, and as a result, their eigenvalues can include complex numbers; (ii) some of these matrices are only defined for specific types of mixed graphs; and (iii) in some matrices, undirected edges are represented as digons. This representation makes it  unclear from the matrix entries whether two vertices are joined by an edge or a digon, even when the index set of the matrix is provided.  Furthermore, the number of edges, loops, arcs, and the orientation of arcs cannot be clearly determined from the entries of the matrix defined in~\cite{Bapat1999generalized}. Consequently, not all mixed graphs can be determined from their associated matrices.  

To address these issues, in this paper, we introduce a matrix, named the integrated adjacency matrix for a mixed graph. We study the properties of a mixed graph through the eigenvalues and characteristic polynomial of its integrated adjacency matrix.


The rest of the paper is arranged as follows. We present some preliminary definitions and notations in Section~\ref{S2 prelims} that are used throughout the paper. In Section~\ref{S3 adjacency matrix}  we define a matrix for a mixed graph, namely the integrated adjacency matrix, and associate a graph to it. We show that the integrated adjacency matrix of a mixed graph is identical to the adjacency matrix of the associated graph. 

Section~\ref{S4basic results}  presents some basic results which relate the mixed graph invariants, such as number of edges, number of arcs to the eigenvalues of its integrated adjacency matrix. Additionally, we define a new structure called an alternating walk in a mixed graph and use it to  determine the entries of the $k$-th power of the integrated adjacency matrix of a mixed graph.

In Section~\ref{S5 spectra} we determine the eigenvalues of the integrated adjacency matrix of some special mixed graphs, such as complete $k$-partite mixed graph, complete $k$-partite directed graph, and complete mixed graph. 
We also provide a necessary and sufficient condition on the eigenvalues and the eigenvectors of the integrated adjacency matrix for a mixed graph being $(r,s)$-regular. Furthermore, we derive the  characteristic polynomial of the integrated adjacency matrix of a mixed graph formed by adding a new vertex in a specific manner to a given mixed graph. 

In Section~\ref{S6 components}, we introduce a new structure in a mixed graph, namely a mixed component. We prove that the number of mixed components of a mixed graph equals the number of components of the associated graph. 

Section~\ref{S7 determinant} provides the determinant of the integrated adjacency matrix for specific types of mixed graphs. Finally, in Section~\ref{S8 bounds}, we  establish bounds for the largest,  second largest,  smallest,  and second smallest eigenvalues of the integrated adjacency matrix in relation with the mixed graph invariants, such as the number of edges, arcs, and degrees. Additionally, we derive upper bounds for the clique number and the independence number of a mixed graph using the eigenvalues of its integrated adjacency matrix.
	
	\section{Preliminaries}\label{S2 prelims}
In this section, we present some notations and definitions of graphs, directed graphs and mixed graphs. A mixed graph $G$ is an ordered $5$-tuple $G=(V_G,E_G,\vec{E}_G,\phi_G,\psi_G)$, where $V_G\neq\emptyset$, $E_G$, and $\vec{E}_G$ are three mutually disjoint sets, and $\phi_G$ (resp. $\psi_G$) is the incidence function which maps each element of $E_G$ (resp. $\vec{E}_G$) with an unordered pair (resp. ordered pair) of elements of $V_G$. The sets $V_G$, $E_G$, and $\vec{E}_G$ are called vertex set, edge set and arc set, respectively. Elements of the sets are called vertices, edges and arcs, respectively. 
Let $V_G=\{v_1,v_2,\ldots,v_n\}$.
If there is an edge $e\in E_G$ such that $\phi_G(e)=\{v_i,v_j\}$, then we say that $e$ is an edge joining $v_i$ and $v_j$ in $G$. In particular, if $\phi_G(e)=\{v_i,v_i\}$, then we say that $e$ is a loop at $v_i$. If there is an arc $a\in \vec{E}_G$ such that $\psi_G(a)=(v_i,v_j)$, then we say that $a$ is an arc from $v_i$ to $v_j$ in $G$. In this case, we say that $a$ starts at $v_i$ and ends at $v_j$. In particular, if $\psi_G(a)=(v_i,v_i)$, then we say that $a$ is a directed loop at $v_i$. 
If two or more edges joining a same pair of vertices, then they are called multiple edges.
If two or more arcs start at $u$ and end at $v$ for some $u,v\in V_G$, then they are called multiple arcs. 
In particular, if two or more loops (resp. directed loops) are at one vertex, then they are called multiple loops (resp. multiple directed loops). If there is an edge joining $v_i$ and $v_j$ in $G$, we denote it as $v_i \sim v_j$. To specify the number of edges joining $v_i$ and $v_j$, we write $v_i\overset{k}{\sim} v_j$, provided there are $k$ such edges in $G$. If $v_i\overset{l}{\sim} v_i$, this indicates that there are $l$ loops at $v_i$ in $G$. If there is an arc from $v_i$ to $v_j$ in $G$, we denote it as $v_i \rightarrow v_j$. To indicate the number of arcs from $v_i$ to $v_j$, we write $v_i\overset{k}{\rightarrow} v_j$, provided there are $k$ such arcs in $G$. If $v_i\overset{l}{\rightarrow} v_i$, this means there are $l$ directed loops at $v_i$ in $G$.	Two vertices $u$ and $v$ in $G$ are said to be \textit{adjacent} if at least one of $u\sim v$, $u\rightarrow v$ or $v\rightarrow u$ holds. An edge $e$ is said to be \textit{incident} at a vertex $u$ in $G$ if $\phi_G(e)=\{u,v\}$ for some vertex $v$ in $G$. An arc $a$ is said to be \textit{incident} at a vertex $u$ in $G$ if $\psi_G(a)=(u,v)$ or $\psi_G(a)=(v,u)$ for some vertex $v$ in $G$. $G$ is said to be loopless if it has no loops. 

The \textit{undirected degree} $d(u)$ of  a vertex $u$ in $G$ is the sum of the number of edges incident with $u$ and the number of loops at $u$, where each loop at $u$ is counted twice. The \textit{out-degree} $d^+(u)$ of $u$ in $G$ is the number of arcs start at $u$ (which includes the number of directed loops at $u$). The \textit{in-degree} $d^-(u)$ of $u$ in $G$ is the number of arcs end at $u$ (which includes the number of directed loops at $u$). $l(u)$ denotes the number of loops at $u$. $e(G),a(G)$ and $l(G)$ denote the number of edges (excluding loops), arcs (including directed loops) and loops in $G$, respectively. $G$ is said to be \textit{$(r,s)$-regular} if $d(u)=r$ and $d^+(u)=s=d^-(u)$ for all $u\in V_G$. A \textit{walk} in $G$ is a sequence $W:v_{n_1},e_1,v_{n_2},e_2,\ldots,e_{k-1},v_{n_k}$, where $v_{n_i}\in V_G$, $e_j\in E_G$ and $\phi_G(e_j)=\{v_{n_j},v_{n_{j+1}}\}$, or $e_j\in \vec{E}_G$ and $\psi_G(e_j)=(v_{n_j},v_{n_{j+1}})$ or $\psi_G(e_j)=(v_{n_{j+1}},v_{n_j})$ for $i=1,2,\ldots,k$, $j=1,2,\ldots,k-1$.
In this case, we say that $W$ is a walk from $v_{n_1}$ to $v_{n_k}$. The total number of edges and arcs in $W$ (with each repeated edge or arc counted as many times as it appears), i.e., $k-1$ is said to be the \textit{length} of $W$. The walk $W$ is said to be \textit{closed} if $v_{n_1}=v_{n_k}$. A mixed graph $G_1$ is said to be a \textit{submixed graph} of $G$ if $V_{G_1}\subseteq V_G$, $E_{G_1}\subseteq E_G$, $\vec{E}_{G_1}\subseteq \vec{E}_G$, $\phi_G|_{E_{G_1}}=\phi_{G_1}$ and $\psi_G|_{\vec{E}_{G_1}}=\psi_{G_1}$. A submixed graph $G_1$ of $G$ is said to be an \textit{induced submixed graph} of $G$ if $G_1$ is the maximal submixed graph of $G$ with vertex set $V_{G_1}$. A submixed graph $G_1$ of $G$ is said to be a \textit{spanning submixed graph} of $G$ if $V_{G_1}=V_G$.

A mixed graph $G$ is said to be a graph if $\vec{E}_G=\emptyset$.
Let $G$ be a graph with $V_G=\{v_1,v_2,\ldots,v_n\}$.
The \textit{adjacency matrix} of $G$, denoted by $A(G)$, is defined as follows: The rows and the columns of $A(G)$ are indexed by the vertices of $G$, and for $i,j=1,2,\ldots,n$,
\begin{eqnarray*}
	\textnormal{the }(v_i,v_j)\textnormal{-th entry of } A(G) =\begin{cases}
		k, & \textnormal{if}~i\neq j\textnormal{ and} ~v_i\overset{k}{\sim} v_j;\\
		2l, & \textnormal{if}~i=j\textnormal{ and} ~v_i\overset{l}{\sim} v_i;\\
		0, & \textnormal{otherwise}.
	\end{cases}
\end{eqnarray*}
We denote its eigenvalues as $\lambda_i(G)$ for $i=1,2,\ldots,n$. Since $A(G)$ is real symmetric, we  can arrange them, with out loss of generality as $\lambda_1(G)\geq\lambda_2(G)\geq\cdots\geq\lambda_{n}(G)$. 
The path graph and the cycle graph on $n$ vertices are denoted by $P_n$ and $C_n$, respectively. The disjoint union of $m$ copies of a graph $G$ is denoted by $mG$.


A mixed graph $G$ is said to be a directed graph if $E_G=\emptyset$. Let $G$ be a directed graph with $V_G=\{v_1,v_2,\ldots,v_n\}$.
The \textit{adjacency matrix} of $G$, denoted by $\vec{A}(G)$, is defined as follows~\cite{bapatbook}: The rows and the columns of $\vec{A}(G)$ are indexed by the vertices of $G$, and for $i,j=1,2,\ldots,n$,
\begin{eqnarray*}
	\textnormal{the }(v_i,v_j)\textnormal{-th entry of } \vec{A}(G) =\begin{cases}
		k, & \textnormal{if}~v_i\overset{k}{\rightarrow} v_j;\\
		0, & \textnormal{otherwise}.
	\end{cases}
\end{eqnarray*}

A \textit{simple graph} (resp. \textit{simple directed graph}) is a graph (resp. directed graph) which has no loops (resp. directed loops) and no multiple edges (resp. multiple arcs). A \textit{simple mixed graph} is a mixed graph which has no loops, directed loops, multiple edges or multiple arcs. A \textit{complete graph} is a simple graph in which every pair of distinct vertices is adjacent. A \textit{complete directed graph} is a simple directed graph in which, for every pair of distinct vertices $u$ and $v$, both $u\rightarrow v$ and $v\rightarrow u$ exist. A \textit{complete mixed graph} is a simple mixed graph in which, for every pair of distinct vertices $u$ and $v$, the following relationship hold: $u\sim v$, $u\rightarrow v$ and $v\rightarrow u$. We denote the complete graph, the complete directed graph and the complete mixed graph on $n$ vertices as $K_n$, $K^D_n$ and $K^M_n$, respectively.

An \textit{independent set} in a mixed graph $G$ is a set of vertices of $G$ in which no two vertices are adjacent. The \textit{independence number} of $G$, denoted by $\alpha(G)$, is the cardinality of a largest independent set of $G$. A \textit{$k$-partite graph} (or \textit{$k$-partite directed graph}, or \textit{$k$-partite mixed graph}, resp.) is a graph (or directed graph, or mixed graph, resp.) whose vertices can be partitioned into $k$ independent sets. If $k=2$, then we say that it is a \textit{bipartite graph} (or \textit{bipartite directed graph}, or \textit{bipartite mixed graph}, resp.). Let $G$ be a  $k$-partite graph with vertex partition $V_G=V_1\overset{.}{\cup}V_2\overset{.}{\cup}\cdots\overset{.}{\cup}V_k$ and $|V_i|=n_i$ for $i=1,2,\ldots,k$. Then $G$ is said to be \textit{complete $k$-partite} if $u\sim v$ for every $u\in V_i$ and $v\in V_j$ with $i\neq j$. We denote it by $K_{n_1,n_2,\ldots,n_k}$. In particular, $K_{k(m)}$ is used when $n_1=n_2=\cdots=n_k=m$.

Let $G$ be a $k$-partite directed graph with vertex partition $V_G=V_1\overset{.}{\cup}V_2\overset{.}{\cup}\ldots\overset{.}{\cup}V_k$ and $|V_i|=n_i$ for $i=1,2,\ldots,k$. Then $G$ is said to be \textit{complete $k$-partite} if $u\rightarrow v$ and $v\rightarrow u$ for every $u\in V_i$ and $v\in V_j$ with $i\neq j$. We denote it by $K^D_{n_1,n_2,\ldots,n_k}$. In particular, $K^D_{k(m)}$ is used when $n_1=n_2=\cdots=n_k=m$.

Let $G$ be a $k$-partite mixed graph with vertex partition $V_G=V_1\overset{.}{\cup}V_2\overset{.}{\cup}\ldots\overset{.}{\cup}V_k$ and $|V_i|=n_i$ for $i=1,2,\ldots,k$. Then $G$ is said to be \textit{complete $k$-partite} if $u\sim v$,  $u\rightarrow v$, and $v\rightarrow u$ for every $u\in V_i$ and $v\in V_j$ with $i\neq j$. We denote it by $K^M_{n_1,n_2,\ldots,n_k}$. In particular, $K^M_{k(m)}$ is used when $n_1=n_2=\cdots=n_k=m$.

Throughout this paper, $I_n$ denotes the $n\times n$ identity matrix, $J_{n\times m}$ denotes the all-ones $n\times m$ matrix, $\mathbf{0}_{n\times m}$ denotes the all-zero $n\times m$ matrix, and $\mathbf{1}_n$ denotes the all-ones matrix of size $n\times 1$. These matrices are simply referred to as $I$, $J$, $\mathbf{0}$, and $\mathbf{1}$, when their sizes are evident from the context. The characteristic polynomial of a square matrix  $M$ is denoted as $P_M(x)$, and the spectrum of $M$ is the multi-set of all the eigenvalues of $M$. For an $n\times n$ real matrix $M$ with real eigenvalues, the eigenvalues are denoted by $\lambda_i(M)$ for $i=1,2,\ldots,n$ and they are ordered as $\lambda_1(M)\geq \lambda_2(M)\geq \cdots\geq\lambda_n(M)$.
An eigenvector $X$ of a matrix is said to be positive if all it's entries are positive real numbers. For a  matrix $M$, $M[i|j]$ denotes the matrix obtained by deleting the $i$-th row and the $j$-th column from $M$. We denote $M[i|i]$ simply as $M[i]$. Let $M=[m_{ij}]$ and $N$ be matrices of order $m\times n$ and $p\times q$, respectively. The Kronecker product of $M$ and $N$, denoted by $M\otimes N$, is the $mp\times nq$ block matrix $[m_{ij}N]$.

In the rest of the paper we consider only mixed graphs having finite number of vertices.
	\section{Integrated adjacency matrix of a mixed graph}\label{S3 adjacency matrix}
Let $G$ be a mixed graph. The \textit{undirected part} (resp. the \textit{directed part}) of $G$ is the spanning submixed graph of $G$ whose edge set is $E_G$, and arc set is $\emptyset$ (resp. edge set is $\emptyset$, and arc set is $\vec{E}_G$). We denote the undirected part   and the directed part of $G$ as $G_u$ and $G_d$, respectively. 
Clearly, every mixed graph can be viewed as the union of its undirected part and directed part. 

\begin{example}\normalfont
	Consider the mixed graph $G$ shown in  Figure~\ref{fig-mixed-exmpl}. It's undirected part $G_u$ and  directed part $G_d$ are also depicted within the same figure.
	\begin{figure}[ht]
		\begin{center}
			\includegraphics[scale=1]{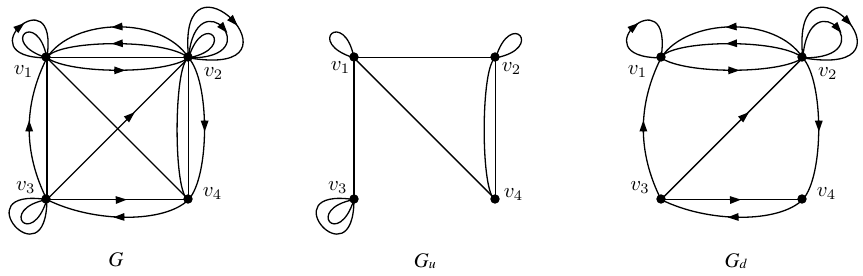}
		\end{center}\caption{A mixed graph $G$ along with its undirected part $G_u$ and directed part $G_d$.}\label{fig-mixed-exmpl}
	\end{figure}
\end{example}
\begin{defn}\normalfont\label{adjacency}
	Let $G$ be a mixed graph with $V_G=\{v_1,v_2,\ldots,v_n\}$. For $i=1,2,\ldots,n$, let $v'_i$ and $v''_i$ represent two copies of $v_i$. We define the \textit{integrated adjacency matrix} of $G$, denoted by $\mathcal{I}(G)$,  as the square matrix of order $2n$, with rows and columns are indexed by the elements of the set $\{v'_1,v'_2,\ldots,v'_n,v''_1,v''_2,\ldots,v''_n\}$.
	For $i,j=1,2,\ldots,n$,
	\begin{equation*}
		\textnormal{the }(v'_i,v'_j)\textnormal{-th entry of }\mathcal{I}(G)=\textnormal{the }(v''_i,v''_j)\textnormal{-th entry of }\mathcal{I}(G)=\begin{cases}
			2r, & \text{if}~i= j\textnormal{ and}~ v_{i}\overset{r}{\sim} v_{j};\\
			s, & \text{if}~i\neq j\textnormal{ and}~ v_{i}\overset{s}{\sim} v_{j};\\
			0, & \text{otherwise};
		\end{cases}
	\end{equation*}
	\begin{equation*}
		\textnormal{the }(v'_i,v''_j)\textnormal{-th entry of }\mathcal{I}(G)=\textnormal{the }(v''_j,v'_i)\textnormal{-th entry of }\mathcal{I}(G)=\begin{cases}
			t, & \text{if}~ v_{i}\overset{t}{\rightarrow} v_{j};\\
			0, & \text{otherwise}.
		\end{cases}
	\end{equation*}
\end{defn}

The eigenvalues of $\mathcal{I}(G)$ are called the $\mathcal{I}$-\textit{eigenvalues} of $G$. We denote these eigenvalues as $\bm{\lambda}_i(G)$ for $i=1,2,\ldots, 2n$. Since $\mathcal{I}(G)$ is real symmetric, these can be arranged, without loss of generality as $\bm{\lambda}_1(G)\geq\bm{\lambda}_2(G)\geq\cdots\geq\bm{\lambda}_{2n}(G)$. 	The spectrum of $\mathcal{I}(G)$ is referred to as the $\mathcal{I}$-\textit{spectrum} of $G$.

It can be seen that a mixed graph $G$ can be  determined by its integrated adjacency matrix $\mathcal{I}(G)$, as long as the indices of the matrix are specified.

By arranging the rows and columns of $\mathcal{I}(G)$ in the order 
$v'_1,v'_2,\ldots,v'_n,v''_1,v''_2,\ldots,v''_n$, the matrix
$\mathcal{I}(G)$ can be viewed as a $2\times 2$ block matrix 
\begin{equation}\label{adjmatrix block}
	\mathcal{I}(G)=\begin{bmatrix}
		A(G_u) & \vec{A}(G_d)\\
		\vec{A}(G_d)^T & A(G_u)
	\end{bmatrix}.
\end{equation}
Since the matrix $\mathcal{I}(G)$ integrates the matrices 	$A(G_u)$ and $\vec{A}(G_d)$, it justifies its name.

Now we define a graph, called the \emph{associated graph} of $G$, denoted by $G^A$, as follows.
Let $V_{G^A}=\{v'_1,v'_2,\ldots,v'_n,v''_1,v''_2,\ldots,v''_n\}$. The adjacency among the vertices of $G^A$ is determined using the following rules, for $i,j=1,2,\ldots,n$: 
\begin{itemize}
	\item[(i)] $v'_i\overset{k}{\sim} v'_j$ and $v''_i\overset{k}{\sim} v''_j$ in $G^A$ if and only if $v_i\overset{k}{\sim} v_j$ in $G$;
	\item[(ii)] $v'_i\overset{k}{\sim} v''_j$ in $G^A$ if and only if $v_i\overset{k}{\rightarrow} v_j$ in $G$.
\end{itemize}

Clearly, $G^A$ has $2e(G)+a(G)$ edges (excluding loops) and $2l(G)$  loops. The subgraphs spanned by the vertex subsets $\{v'_1,v'_2,\ldots,v'_n\}$ and $\{v''_1,v''_2,\ldots,v''_n\}$ of $G^A$ are isomorphic to $G_u$. If $G$ has a loop at $v_i$,  then correspondingly $G^A$ has two loops: one at $v'_i$ and the other at $v''_i$. If $G$ has a directed loop at $v_i$ in $G$, then correspondingly $G^A$ has an edge joining $v'_i$ and $v''_i$. 

It is evident that $A(G^A)=\mathcal{I}(G)$. Consequently the spectrum of $\mathcal{I}(G)$ is identical to the spectrum of $A(G^A)$. From the construction of $G^A$ it is easy to observe that if $G$ is simple, then $G^A$ is a simple graph.
\begin{example}\normalfont
	Consider the graph $G$ shown in Figure~\ref{fig-mixed-exmpl}. Its integrated adjacency matrix is 
	$$\mathcal{I}(G)=\begin{bmatrix}
		2&1&1&1&1&1&0&0\\
		1&2&0&2&2&2&0&1\\
		1&0&4&0&1&1&0&1\\
		1&2&0&0&0&0&1&0\\
		1&2&1&0&2&1&1&1\\
		1&2&1&0&1&2&0&2\\
		0&0&0&1&1&0&4&0\\
		0&1&1&0&1&2&0&0
	\end{bmatrix},$$ and its associated graph $G^A$ is shown in Figure~\ref{associated graph}.
	\begin{figure}[ht]
		\begin{center}
			\includegraphics[scale=1]{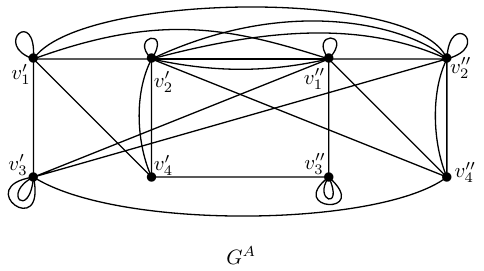}
		\end{center}\caption{The associated graph $G^A$ of the mixed graph $G$ shown in Figure~\ref{fig-mixed-exmpl}}\label{associated graph}
	\end{figure}
\end{example}
	
\section{Some basic results}\label{S4basic results}
We start with the following observations.
\begin{observation}\label{observations}\normalfont
	\begin{enumerate}[(1)]
		\item If $G$ is a graph, then $\mathcal{I}(G)=I_2\otimes A(G)$. Consequently, the eigenvalues of $\mathcal{I}(G)$ are identical to those of $A(G)$ but with twice their multiplicities. That is, $\lambda$ is an eigenvalue of $A(G)$ with multiplicity $k$ if and only if $\lambda$ is an eigenvalue of $\mathcal{I}(G)$ with multiplicity $2k$.
		\item If $G$ is a directed graph in which every pair of its vertices has the same number of arcs in both direction, then $\mathcal{I}(G)=(J_2-I_2)\otimes\vec{A}(G)$.
		As a result, $\lambda_1,\lambda_{2},\ldots,\lambda_n$ are the eigenvalues of $\vec{A}(G)$ if and only if  $\lambda_1,\lambda_{2},\ldots,\lambda_n,-\lambda_1,-\lambda_{2},\ldots,-\lambda_n$ are the eigenvalues of $\mathcal{I}(G)$, where $n$ represents the number of vertices in $G$.
	\end{enumerate}
\end{observation}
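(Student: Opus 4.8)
The plan is to establish both identities directly from the block decomposition~\eqref{adjmatrix block} of $\mathcal{I}(G)$, and then read off the spectra using the eigenvalue rule for Kronecker products. In each case the work is to specialize the two blocks $A(G_u)$ and $\vec{A}(G_d)$ under the stated hypothesis and recognize the resulting $2\times 2$ block matrix as a Kronecker product.

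For part (1), I would first note that since $G$ is a graph, its arc set is empty, so the directed part $G_d$ contributes nothing and $\vec{A}(G_d)=\mathbf{0}$; moreover $G$ coincides with its undirected part, whence $A(G_u)=A(G)$. Substituting these into~\eqref{adjmatrix block} produces the block-diagonal matrix with $A(G)$ in both diagonal blocks, which is precisely $I_2\otimes A(G)$. The multiplicity claim then follows from the spectral rule for Kronecker products: the eigenvalues of $I_2\otimes A(G)$ are all products $\mu\lambda$, where $\mu$ ranges over the eigenvalues of $I_2$ (both equal to $1$) and $\lambda$ over those of $A(G)$; hence each eigenvalue $\lambda$ of $A(G)$ occurs exactly twice, doubling its multiplicity.

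For part (2), I would similarly observe that $G$ being a directed graph forces $E_G=\emptyset$, so $A(G_u)=\mathbf{0}$. The key structural step is to translate the hypothesis: requiring that each pair of vertices carry equally many arcs in both directions means $v_i\overset{k}{\rightarrow}v_j$ if and only if $v_j\overset{k}{\rightarrow}v_i$, i.e.\ $\vec{A}(G)$ is symmetric, so that $\vec{A}(G_d)^T=\vec{A}(G_d)=\vec{A}(G)$. Plugging this into~\eqref{adjmatrix block} yields the matrix carrying $\vec{A}(G)$ in both off-diagonal blocks and $\mathbf{0}$ on the diagonal, which equals $(J_2-I_2)\otimes\vec{A}(G)$ since $J_2-I_2=\begin{bmatrix}0&1\\1&0\end{bmatrix}$. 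As $J_2-I_2$ has eigenvalues $1$ and $-1$, and $\vec{A}(G)$ is now symmetric with real eigenvalues $\lambda_1,\ldots,\lambda_n$, the Kronecker rule delivers the spectrum $\{\lambda_i\}\cup\{-\lambda_i\}$, as claimed.

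I do not anticipate a genuine obstacle: both statements reduce to recognizing the specialized block form as a Kronecker product. The only point requiring a little care is the application of the spectral rule in part (2) — one should invoke it for the symmetric (hence diagonalizable) factors $J_2-I_2$ and $\vec{A}(G)$, so that algebraic and geometric multiplicities agree and the paired list $\lambda_i,-\lambda_i$ is produced with the correct multiplicity. It is precisely the symmetry of $\vec{A}(G)$ secured by the balanced-arc hypothesis that licenses this step and guarantees the eigenvalues are real.
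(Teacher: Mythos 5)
Your proposal is correct and is essentially the paper's own (implicit) justification: the paper states this result as an observation without a written proof, and the intended argument is exactly yours --- specialize the two blocks of the decomposition \eqref{adjmatrix block} under each hypothesis, recognize the result as $I_2\otimes A(G)$ or $(J_2-I_2)\otimes\vec{A}(G)$, and read off the spectrum via the Kronecker-product eigenvalue rule for symmetric factors, the same rule the paper later cites from \cite{bapatbook} in the proof of Theorem~\ref{thm-adj-complete-kpartite}. Your added care in part (2), noting that the balanced-arc hypothesis makes $\vec{A}(G)$ symmetric so the rule applies and the eigenvalues are real, is precisely the point the observation takes for granted.
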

The proof of the following result  follows directly from the definition of the integrated adjacency matrix of a mixed graph.
\begin{pro}\label{pro-degreesum}
	Let $G$ be a mixed graph. Then the following holds.
	\begin{enumerate}[(i)]
		\item $\underset{u\in V_G}{\sum}d(u)=2(e(G)+l(G))$,
		\item $\underset{u\in V_G}{\sum}d^+(u)=a(G)=\underset{u\in V_G}{\sum}d^-(u)$.
	\end{enumerate}
\end{pro}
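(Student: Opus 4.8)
The plan is to reduce both identities to double-counting statements about the entries of the two blocks $A(G_u)$ and $\vec{A}(G_d)$ appearing in the block form \eqref{adjmatrix block} of $\mathcal{I}(G)$. The unifying observation is that the row and column sums of these blocks recover exactly the degree sequences defined in Section~\ref{S2 prelims}, so that each identity becomes a comparison of a degree sum with the total entry-sum of the relevant block.

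For part (i), I would first note that, by the definition of $A(G_u)$, the sum of the entries in the row indexed by $v_i$ equals the number of non-loop edges incident with $v_i$ plus $2l(v_i)$, which is precisely $d(v_i)$. Summing over all $i$, the quantity $\sum_{u\in V_G}d(u)$ therefore equals the sum of all entries of $A(G_u)$. On the other hand, each non-loop edge $\{v_i,v_j\}$ contributes to the two symmetric off-diagonal entries indexed by $(v_i,v_j)$ and $(v_j,v_i)$, hence $2$ to this total, while each loop contributes $2l(v_i)$ on the diagonal; so the entry-sum equals $2e(G)+2l(G)$. Since $G$ is simple we have $l(G)=0$, and the identity $\sum_{u\in V_G}d(u)=2e(G)$ follows.

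For part (ii), I would argue analogously with the off-diagonal block $\vec{A}(G_d)$. By definition its $(v_i,v_j)$-entry is the number of arcs from $v_i$ to $v_j$, so the sum of the row indexed by $v_i$ is $d^{+}(v_i)$ and the sum of the column indexed by $v_j$ is $d^{-}(v_j)$. Summing over the rows gives $\sum_{u\in V_G}d^{+}(u)$ as the total of all entries of $\vec{A}(G_d)$, and summing over the columns gives $\sum_{u\in V_G}d^{-}(u)$ as the same total; since every arc is counted exactly once in that full entry-sum, the common value is $a(G)$, yielding $\sum_{u\in V_G}d^{+}(u)=a(G)=\sum_{u\in V_G}d^{-}(u)$.

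I do not anticipate a genuine obstacle here: the statement is essentially the handshaking lemma applied separately to the undirected and directed parts of $G$, and the only point requiring care is the diagonal convention (loops weighted by $2l$, directed loops absent) together with the hypothesis that $G$ is simple, which is exactly what eliminates the loop term $2l(G)$ in part (i) and rules out directed-loop corrections in part (ii).
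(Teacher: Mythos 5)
Your proof is correct and takes essentially the same approach as the paper: the paper offers no written argument, stating only that the proposition ``follows directly from the definition of the integrated adjacency matrix,'' and your double-counting of the row sums, column sums, and total entry-sums of the blocks $A(G_u)$ and $\vec{A}(G_d)$ is exactly the argument being alluded to. Your attention to the diagonal conventions (loops weighted by $2l$, directed loops on the diagonal of $\vec{A}(G_d)$) and to how simplicity eliminates the loop terms fills in the details the paper leaves implicit.
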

\begin{thm}\label{thm-traceA}
	Let $G$ be a simple mixed graph on $n$ vertices. Then the following holds.
	\begin{enumerate}[(i)]
		\item $\underset{i=1}{\overset{2n}{\sum}}\bm{\lambda}_i(G)=0$,
		\item $\underset{i=1}{\overset{2n}{\sum}}\bm{\lambda}_i(G)^2=4e(G)+2a(G)$.
	\end{enumerate} 
\end{thm}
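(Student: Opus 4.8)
The plan is to express both sums as traces and to exploit the identification $A(G^A)=\mathcal{I}(G)$ established in Section~\ref{S3 adjacency matrix}. Since $\sum_{i=1}^{2n}\bm{\lambda}_i(G)=\operatorname{tr}(\mathcal{I}(G))$ and, because $\mathcal{I}(G)$ is real symmetric, $\sum_{i=1}^{2n}\bm{\lambda}_i(G)^2=\operatorname{tr}(\mathcal{I}(G)^2)=\sum_{p,q}[\mathcal{I}(G)]_{pq}^2$, it suffices to read off the diagonal entries of $\mathcal{I}(G)$ and to count the squares of all its entries.

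For part~(i), I would argue that every diagonal entry of $\mathcal{I}(G)$ vanishes. From the block form \eqref{adjmatrix block}, the main diagonal of $\mathcal{I}(G)$ consists precisely of the two copies of the diagonal of $A(G_u)$, while the blocks $\vec{A}(G_d)$ and $\vec{A}(G_d)^T$ sit off the main diagonal and contribute nothing to the trace. Because $G$ is simple it has no loops, so $A(G_u)$ has zero diagonal, giving $\operatorname{tr}(\mathcal{I}(G))=0$. Equivalently, one may invoke that $G^A$ is simple (hence loopless) whenever $G$ is, so its adjacency matrix has zero trace.

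For part~(ii), the cleanest route is through $G^A$. Since $G$ is simple, $G^A$ is a simple graph with no loops, and it was noted in Section~\ref{S3 adjacency matrix} that $G^A$ has $2e(G)+a(G)$ edges. For the adjacency matrix of any simple graph, $\operatorname{tr}(A^2)$ equals twice the number of edges, since each edge $\{x,y\}$ contributes a unit entry in positions $(x,y)$ and $(y,x)$. Hence $\sum_{i=1}^{2n}\bm{\lambda}_i(G)^2=\operatorname{tr}(\mathcal{I}(G)^2)=\operatorname{tr}(A(G^A)^2)=2\bigl(2e(G)+a(G)\bigr)=4e(G)+2a(G)$. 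Alternatively, working directly from \eqref{adjmatrix block}, the sum of squares of the entries decomposes as $2\|A(G_u)\|_F^2+2\|\vec{A}(G_d)\|_F^2$; simplicity forces every nonzero entry to equal $1$, so $\|A(G_u)\|_F^2=2e(G)$ and $\|\vec{A}(G_d)\|_F^2=a(G)$, again yielding $4e(G)+2a(G)$.

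I expect no serious obstacle here; the only points requiring care are bookkeeping ones: ensuring that simplicity is used correctly to rule out diagonal contributions from loops and directed loops, that each edge and arc contributes exactly one unit entry, and that the factor of two arising from the two identical diagonal blocks (equivalently, from the two isomorphic copies of $G_u$ inside $G^A$) is tracked consistently.
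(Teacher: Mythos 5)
Your proof is correct, and part (i) coincides with the paper's argument: the trace of $\mathcal{I}(G)$ vanishes because simplicity kills the diagonal. For part (ii) you take a mildly different route. The paper computes the diagonal entries of $\mathcal{I}(G)^2$ explicitly as $d(v_i)+d^+(v_i)$ (for the first $n$ indices) and $d(v_i)+d^-(v_i)$ (for the last $n$), and then finishes with Proposition~\ref{pro-degreesum}, i.e.\ $\sum_{u}d(u)=2e(G)$ and $\sum_{u}d^+(u)=\sum_{u}d^-(u)=a(G)$. You instead use $\operatorname{tr}(\mathcal{I}(G)^2)=\sum_{p,q}[\mathcal{I}(G)]_{pq}^2$ and count unit entries, either blockwise from \eqref{adjmatrix block} or by passing to $G^A$ and citing that a simple graph with $m$ edges has $\operatorname{tr}(A^2)=2m$, combined with the count $2e(G)+a(G)$ of edges of $G^A$ from Section~\ref{S3 adjacency matrix}. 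The two computations are equivalent---the paper's diagonal entries are exactly the degrees of the vertices of $G^A$, and summing degrees is the same as counting nonzero entries---but your $G^A$ route is more in keeping with the reduce-to-$G^A$ strategy the paper uses for most of its other theorems, whereas the paper's direct computation makes visible where the mixed-graph degrees $d$, $d^+$, $d^-$ enter, which is precisely the content of Proposition~\ref{pro-degreesum}. The bookkeeping points you flag (simplicity forces $0/1$ entries and a zero diagonal; digons are still allowed but never push an entry above $1$) are exactly the right ones, so there is no gap.
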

\begin{proof}
	Since $\underset{i=1}{\overset{2n}{\sum}}\bm{\lambda}_i(G)=tr(\mathcal{I}(G))$, part~(i) follows. 
	
	Let $v_1,v_2,\ldots,v_n$ be the vertices of $G$. Note that the $i$-th diagonal entry of $\mathcal{I}(G)^2$ is $d(v_i)+d^+(v_i)$, if $i=1,2,\ldots,n$, and $d(v_{i-n})+d^-(v_{i-n})$, if $i=n+1,n+2,\ldots,2n$. 
	Since $\underset{i=1}{\overset{2n}{\sum}}\bm{\lambda}_i(G)^2=tr(\mathcal{I}(G)^2)$, we have
	\begin{eqnarray}\label{sum}
		\underset{i=1}{\overset{2n}{\sum}}\bm{\lambda}_i(G)^2
		&=&\underset{i=1}{\overset{n}{\sum}}(d(v_i)+d^+(v_i))+\underset{i=n+1}{\overset{2n}{\sum}}(d(v_{i-n})+d^-(v_{i-n}))\nonumber\\
		&=&\underset{i=1}{\overset{n}{\sum}}(2d(v_i)+d^+(v_i)+d^-(v_i)).
	\end{eqnarray}
	Part~(ii) follows by applying Proposition~\ref{pro-degreesum} in \eqref{sum}.
\end{proof}
\begin{thm}\label{thm-interlace}
	Let $G$ be a simple mixed graph, and let $H$ be an induced submixed graph of $G$. Then the $\mathcal{I}$-eigenvalues of $H$ interlace the $\mathcal{I}$-eigenvalues of $G$.
\end{thm}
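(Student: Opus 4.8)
The plan is to reduce the statement to the classical Cauchy interlacing theorem for principal submatrices of a real symmetric matrix, exploiting the identity $\mathcal{I}(G)=A(G^A)$ established in Section~\ref{S3 adjacency matrix}. First I would recall the version of interlacing in play: if $M$ is a real symmetric matrix of order $m$ with eigenvalues $\mu_1\geq\cdots\geq\mu_m$, and $N$ is a principal submatrix of $M$ of order $p$ with eigenvalues $\nu_1\geq\cdots\geq\nu_p$, then $\mu_i\geq\nu_i\geq\mu_{i+m-p}$ for each $i=1,2,\ldots,p$. Thus it suffices to exhibit $\mathcal{I}(H)$ as a principal submatrix of $\mathcal{I}(G)$, since both matrices are real symmetric.

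The key structural step is therefore to prove that, when $H$ is an induced submixed graph of $G$, the matrix $\mathcal{I}(H)$ is a principal submatrix of $\mathcal{I}(G)$. Writing $V_G=\{v_1,\ldots,v_n\}$ and $V_H=\{v_{i_1},\ldots,v_{i_k}\}$, the rows and columns of $\mathcal{I}(H)$ are indexed by $\{v'_{i_1},\ldots,v'_{i_k},v''_{i_1},\ldots,v''_{i_k}\}$, which is a subset of the index set of $\mathcal{I}(G)$. I would then verify, case by case from Definition~\ref{adjacency}, that each entry of $\mathcal{I}(H)$ agrees with the corresponding entry of $\mathcal{I}(G)$. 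This is exactly where the hypothesis that $H$ is \emph{induced} is essential: for any $v_a,v_b\in V_H$, the number of edges joining $v_a$ and $v_b$ and the number of arcs from $v_a$ to $v_b$ are the same in $H$ as in $G$, so the blocks $A(H_u)$ and $\vec{A}(H_d)$ appearing in the decomposition~\eqref{adjmatrix block} are precisely the corresponding principal submatrices of $A(G_u)$ and $\vec{A}(G_d)$. Equivalently, in the language of associated graphs, $H^A$ is an induced subgraph of $G^A$, so $A(H^A)$ is obtained from $A(G^A)$ by deleting the rows and columns indexed by $\{v'_j,v''_j : v_j\notin V_H\}$.

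Having established that $\mathcal{I}(H)$ is a principal submatrix of $\mathcal{I}(G)$, the conclusion follows immediately by applying the Cauchy interlacing theorem with $m=2n$ and $p=2k$, giving $\bm{\lambda}_i(G)\geq\bm{\lambda}_i(H)\geq\bm{\lambda}_{i+2(n-k)}(G)$. I do not anticipate any genuine obstacle; the only point requiring care is the bookkeeping in the structural step, namely confirming that deleting from $\mathcal{I}(G)$ the rows and columns corresponding to the vertices outside $V_H$ returns exactly $\mathcal{I}(H)$, which is guaranteed by the ``induced'' hypothesis and fails in general for an arbitrary spanning or non-induced submixed graph.
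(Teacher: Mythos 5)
Your proposal is correct and takes essentially the same route as the paper: both reduce the statement to classical interlacing by identifying $\mathcal{I}(H)$ with a principal submatrix of $\mathcal{I}(G)$, which the paper phrases as $H^A$ being an induced subgraph of $G^A$ together with the identities $\mathcal{I}(G)=A(G^A)$ and $\mathcal{I}(H)=A(H^A)$. The only difference is cosmetic: you apply Cauchy interlacing directly at the matrix level and spell out the entry-by-entry verification (which, as a bonus, does not even need the simplicity hypothesis), whereas the paper cites the interlacing theorem for simple graphs applied to the pair $H^A$, $G^A$.
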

\begin{proof}
	Since $H^A$ is an induced subgraph of $G^A$, as per the interlacing theorem for simple graphs, it follows that the eigenvalues of $A(H^A)$ interlace the eigenvalues of $A(G^A)$. Since $\mathcal{I}(G)=A(G^A)$ and $\mathcal{I}(H)=A(H^A)$, the result follows.
\end{proof}

\begin{thm}\label{coeff}
	For a simple mixed graph $G$ on $n$ vertices, the coefficient of $x^{2n-2}$ in the characteristic polynomial of $\mathcal{I}(G)$ is $-2e(G)-a(G)$.
\end{thm}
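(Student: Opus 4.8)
The plan is to express the desired coefficient in terms of the $2\times 2$ principal minors of $\mathcal{I}(G)$ and then to evaluate their sum using the trace computation already established in Theorem~\ref{thm-traceA}. Writing $\mathcal{I}(G)=[m_{ij}]$, a $2n\times 2n$ real symmetric matrix, recall that the characteristic polynomial admits the expansion
\[
P_{\mathcal{I}(G)}(x)=\det(xI-\mathcal{I}(G))=\sum_{k=0}^{2n}(-1)^k E_k\, x^{2n-k},
\]
where $E_k$ denotes the sum of all $k\times k$ principal minors of $\mathcal{I}(G)$. In particular, the coefficient of $x^{2n-2}$ equals $(-1)^2 E_2=E_2$, so it suffices to prove that $E_2=-2e(G)-a(G)$.

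First I would observe that, since $G$ is a \emph{simple} mixed graph, it has no loops and no directed loops; hence both diagonal blocks $A(G_u)$ and the off-diagonal block $\vec{A}(G_d)$ appearing in \eqref{adjmatrix block} have zero diagonal, so every diagonal entry $m_{ii}$ of $\mathcal{I}(G)$ vanishes. Consequently, for each pair $i<j$ the corresponding $2\times 2$ principal minor collapses to
\[
m_{ii}m_{jj}-m_{ij}m_{ji}=-m_{ij}^2,
\]
where I have used the symmetry $m_{ij}=m_{ji}$.

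Summing over all pairs then yields
\[
E_2=\sum_{i<j}\bigl(-m_{ij}^2\bigr)=-\tfrac12\sum_{i\neq j}m_{ij}^2=-\tfrac12\sum_{i,j}m_{ij}^2=-\tfrac12\, tr\!\left(\mathcal{I}(G)^2\right),
\]
the last equality using that the $i$-th diagonal entry of $\mathcal{I}(G)^2$ is $\sum_j m_{ij}m_{ji}=\sum_j m_{ij}^2$ (again the diagonal of $\mathcal{I}(G)$ being zero lets me drop the $i=j$ terms freely). Finally, applying Theorem~\ref{thm-traceA}(ii), which gives $tr\!\left(\mathcal{I}(G)^2\right)=\sum_{i=1}^{2n}\bm{\lambda}_i(G)^2=4e(G)+2a(G)$, I would conclude $E_2=-2e(G)-a(G)$, as required.

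The argument is essentially routine, and I do not anticipate a genuine obstacle. The only points demanding care are the bookkeeping of signs and of the factor $\tfrac12$ arising from passing between unordered pairs and ordered index sums, together with the observation — specific to \emph{simple} mixed graphs — that the diagonal of $\mathcal{I}(G)$ is identically zero. It is precisely this vanishing that makes each $2\times 2$ principal minor reduce to $-m_{ij}^2$ and thereby allows Theorem~\ref{thm-traceA} to be invoked directly; without it one would have to carry the extra $m_{ii}m_{jj}$ terms, which in general do not cancel.
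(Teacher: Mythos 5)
Your proof is correct. Every step checks out: the principal-minors expansion $\det(xI-\mathcal{I}(G))=\sum_{k}(-1)^k E_k x^{2n-k}$ correctly identifies the coefficient of $x^{2n-2}$ with $E_2$; simplicity of $G$ does force the diagonal of $\mathcal{I}(G)$ to vanish (only the diagonal of the blocks $A(G_u)$ matters here --- the zero diagonal of $\vec{A}(G_d)$ that you also cite contributes only to off-diagonal entries of $\mathcal{I}(G)$, so that remark is true but superfluous); and the reduction $E_2=-\tfrac12\,tr(\mathcal{I}(G)^2)$ via symmetry is sound, after which Theorem~\ref{thm-traceA}(ii) finishes the job.

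Your route is, however, genuinely different from the paper's. The paper stays entirely at the level of eigenvalues: it writes the coefficient as $\sum_{1\leq i<j\leq 2n}\bm{\lambda}_i(G)\bm{\lambda}_j(G)$, expands $\bigl(\sum_j\bm{\lambda}_j(G)\bigr)^2=\sum_j\bm{\lambda}_j(G)^2+2\sum_{j<k}\bm{\lambda}_j(G)\bm{\lambda}_k(G)$, and invokes \emph{both} parts of Theorem~\ref{thm-traceA} (the vanishing trace and the trace of the square). You instead work at the level of matrix entries, via $2\times2$ principal minors, and need only part (ii). The two arguments are really the same Newton-type identity $2e_2=p_1^2-p_2$ read in two languages --- symmetric functions of eigenvalues versus sums over matrix entries --- but your version has the merit of making explicit \emph{where} simplicity of $G$ enters (the zero diagonal kills the $m_{ii}m_{jj}$ terms), whereas in the paper that same hypothesis is hidden inside part (i) of Theorem~\ref{thm-traceA}. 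The paper's version is marginally shorter since the eigenvalue identity requires no discussion of the matrix entries at all.
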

\begin{proof}
	The coefficient of $x^{2n-2}$ in the characteristic polynomial of $\mathcal{I}(G)$ is $\underset{1\leq i<j\leq2n}{\sum}\bm{\lambda}_i(G)\bm{\lambda}_j(G)$. Notice that $$\left(\underset{j=1}{\overset{2n}{\sum}}\bm{\lambda}_j(G)\right)^2=\underset{j=1}{\overset{2n}{\sum}}\bm{\lambda}_j(G)^2+2\underset{1\leq j<k\leq2n}{\sum}\bm{\lambda}_j(G)\bm{\lambda}_k(G).$$
	By using Theorem~\ref{thm-traceA} in the above equation, we get the result as desired.
\end{proof}

\begin{defn}\normalfont
	Let $G$ be a mixed graph with at least one arc. We say a walk containing an arc in $G$ as an \textit{alternating walk} if starting from the first arc it's subsequent arcs must alternate between forward and reverse directions.
	
	We say an alternating walk $W$ in $G$ as an \textit{alternating path} if it satisfies the following conditions:
	\begin{enumerate}[(i)]
		\item Each vertex in $W$ occurs at most twice.
		\item If a vertex in $W$ occurs twice, then the number of arcs between its two occurrences must be odd.
		\item Each edge in $W$ occurs at most twice.
		\item If an edge in $W$ occurs twice, then the number of arcs between its two occurrences must be odd. 
		\item Each arc in $W$ occurs exactly once.
	\end{enumerate}
	
	We say an alternating walk $W$ in $G$ as an \textit{alternating cycle} if it satisfies the following conditions:
	\begin{enumerate}[(1)]
		\item The starting and ending vertex of $W$ are the same.
		\item $W$ satisfies the above conditions~(iii)-(v).
		\item The vertices in $W$ other than the starting vertex (ending vertex) satisfy the above conditions~(i) and (ii).
		\item The starting vertex in $W$ occurs at most thrice.
		\item If the starting vertex in $W$ occurs thrice, then the number of arcs between its successive occurrences must be odd.
	\end{enumerate}
\end{defn}

\begin{thm}\label{entry}
	Let $G$ be a simple mixed graph on $n$ vertices $v_1,v_2,\ldots,v_n$. Then we have the following for $i,j=1,2,\ldots,n$.
	\begin{enumerate}[(i)]
		\item The $(v'_i,v'_j)$-th entry of $\mathcal{I}(G)^k$ is the sum of the number of walks in $G$ from $v_i$ to $v_j$ of length $k$ containing no arcs, and the number of alternating walks in $G$ from $v_i$ to $v_j$ of length $k$ containing even number of arcs with the first arc being directed in the forward direction;
		\item The $(v'_i,v''_j)$-th entry of $\mathcal{I}(G)^k$ is the number of alternating walks in $G$ from $v_i$ to $v_j$ of length $k$ containing odd number of arcs with the first arc being directed in the forward direction;
		\item The $(v''_i,v'_j)$-th entry of $\mathcal{I}(G)^k$ is the number of alternating walks in $G$ from $v_i$ to $v_j$ of length $k$ containing odd number of arcs with the first arc being directed in the backward direction;
		\item The $(v''_i,v''_j)$-th entry of $\mathcal{I}(G)^k$ is the sum of the number of walks in $G$ from $v_i$ to $v_j$ of length $k$ containing no arcs, and the number of alternating walks in $G$ from $v_i$ to $v_j$ of length $k$ containing even number of arcs with the first arc being directed in the backward direction.
	\end{enumerate}
\end{thm}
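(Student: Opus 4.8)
The plan is to exploit the identity $\mathcal{I}(G)=A(G^A)$ established in Section~\ref{S3 adjacency matrix}, together with the standard fact that for a simple graph the $(p,q)$-entry of $A(G^A)^k$ counts the walks of length $k$ in $G^A$ from $p$ to $q$. Thus each entry of $\mathcal{I}(G)^k$ is a walk count in the associated graph, and the theorem reduces to matching walks in $G^A$ with the arc-free and alternating walks in $G$ described in the four cases. Since $G$ is simple, $G^A$ is a simple graph and no edge or arc multiplicities need to be tracked.

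The heart of the argument is a bijection between walks in $G^A$ and walks in $G$. First I would define the projection $\pi\colon V_{G^A}\to V_G$ by $\pi(v'_a)=\pi(v''_a)=v_a$ and observe, from the construction of $G^A$, that any walk $W$ in $G^A$ projects to a walk $\pi(W)$ in $G$: a step within one of the two layers $\{v'_1,\ldots,v'_n\}$ and $\{v''_1,\ldots,v''_n\}$ corresponds to an undirected edge of $G$, while a step crossing between the two layers corresponds to an arc of $G$. The key structural observation is that a crossing step from a primed vertex to a double-primed vertex traverses an arc in the \emph{forward} direction, since $v'_a\sim v''_b$ in $G^A$ exactly when $v_a\to v_b$ in $G$, whereas a crossing step from a double-primed vertex to a primed vertex traverses an arc in the \emph{backward} direction.

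Next I would record, along $W$, whether each visited vertex is primed or double-primed. The number of arcs in $\pi(W)$ equals the number of steps at which the layer changes, and since each arc flips the current layer, consecutive arcs necessarily alternate between forward and backward traversal; hence $\pi(W)$ is either arc-free or an alternating walk. Its first arc is forward precisely when the initial vertex of $W$ is primed and backward precisely when it is double-primed, while the terminal vertex lies in the same layer as the initial vertex if and only if the number of arcs is even. Reading off the four combinations of initial and terminal layers then yields exactly the four cases: for instance a walk counted by the $(v'_i,v'_j)$-entry starts and ends in the primed layer, so it uses an even number of arcs with first arc forward (or no arc at all), which is case~(i).

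Finally I would show $\pi$ is a bijection onto the stated walk classes by constructing the inverse lift: given an arc-free or alternating walk $Q$ in $G$ together with a prescribed initial copy ($v'_i$ or $v''_i$), lift $Q$ step by step, remaining in the current layer on an undirected edge and crossing to the other layer on an arc. The main obstacle is to verify that this lift is always well defined, that is, that the crossing direction forced by the current layer (forward from a primed vertex, backward from a double-primed vertex) matches the direction in which $Q$ actually traverses each arc. This is precisely where the alternating hypothesis enters: starting from a primed vertex, after $m$ arcs the current vertex is primed iff $m$ is even, while in an alternating walk whose first arc is forward the $(m+1)$-th arc is forward iff $m$ is even, so the two always agree (and symmetrically for a double-primed start). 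Checking that $\pi$ and this lift are mutually inverse completes the bijection, and the four walk-count identities follow at once.
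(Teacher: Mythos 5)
Your proposal is correct and follows essentially the same route as the paper: both reduce the claim via $\mathcal{I}(G)=A(G^A)$ to the standard walk-counting fact for powers of the adjacency matrix of a simple graph, and then match walks in $G^A$ with arc-free and alternating walks in $G$ by tracking which of the two layers each step lies in. Your projection-and-lift formulation is just a tidier packaging of the bidirectional case analysis the paper carries out (paper: Cases 1--2 for the forward direction, Cases I--II for the converse), with the same parity argument guaranteeing that layer crossings force the alternation of arc directions.
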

\begin{proof}
	$G^A$ is a simple graph on $2n$ vertices $v'_1,v'_2\ldots,v'_n,v''_1,v''_2,\ldots,v''_n$. According to  \cite[Proposition~1.3.4]{cvetkovic}: ``Let $G$ be a simple graph with $V_G=\{v_1,v_2,\ldots,v_n\}$. Then for $i,j=1,2,\ldots,n$ the $(v_i,v_j)$-th entry of $A(G)^k$ is equal to the number of walks in $G$ of length $k$ that start at $v_i$ and end at $v_j$'', we have the following for $i,j=1,2,\ldots,n$. 
	\begin{enumerate}[(a)]
		\item The $(v'_i,v'_j)$-th entry of $A(G^A)^k$ is the number of walks in $G^A$ of length $k$ that start at $v'_i$ and end at $v'_j$;
		\item The $(v'_i,v''_j)$-th entry of $A(G^A)^k$ is the number of walks in $G^A$ of length $k$ that start at $v'_i$ and end at $v''_j$;
		\item The $(v''_i,v'_j)$-th entry of $A(G^A)^k$ is the number of walks in $G^A$ of length $k$ that start at $v''_i$ and end at $v'_j$;
		\item The $(v''_i,v''_j)$-th entry of $A(G^A)^k$ is the number of walks in $G^A$ of length $k$ that start at $v''_i$ and end at $v''_j$.
	\end{enumerate}
	
	We show that the numbers given in (a), (b), (c) and (d) are equal to the numbers given in parts (i), (ii), (iii) and (iv), respectively in the theorem.
	
	First we show that the number given in (a) is equal to the number given in part~(i) of the theorem. 
	Consider the vertex partition $V_{G^A}=A\cup B$, where $A=\{v_1',v_2',\ldots,v_n'\}$ and $B=\{v_1'',v_2'',\ldots,v_n''\}$. Fix $i,j\in\{1,2,\ldots,n\}$. 	Let $W$ be a walk in $G$ from $v_i$ to $v_j$ of length $k$ as in part (i) of the theorem. We prove that there is a walk in $G^A$ corresponding to $W$ of length $k$ which starts at $v'_i$ and ends at $v'_j$.
	
	\textit{Case~1.} Suppose that $W$ has no arcs.
	As per the construction of $G^A$, $W$ corresponds to two distinct walks of length $k$ in $G^A$. One walk joins only the vertices of $A$, and the other joins only the vertices of $B$. For the count, we consider the walk, say $W'$ which joins the vertices of $A$ only. Then $W'$ starts at $v'_i$ and ends at $v'_j$.
	
	\textit{Case~2.} Suppose that $W$ is an alternating walk containing even number of arcs with the first arc being in the forward direction. Based on the construction of $G^A$, $W$ corresponds to a walk, say $W'$ in $G^A$ of length $k$, which is found as follows. 
	Since the first arc of $W$ is in the forward direction, the corresponding edge in $G^A$ joins a vertex in $A$ to a vertex in $B$. Therefore, the vertices and the edges prior to the first arc in $W$ are associated to the corresponding vertices and the edges in $A$, implying that $W'$ starts at $v_i'$. As the arcs of $W$ alternate in direction, the second arc in $W$ must be in the backward direction consequently the corresponding edge in $G^A$ joins a vertex in $B$ to a vertex in $A$. The vertices and the edges between first two arcs in $W$ are associated to the corresponding vertices and the edges in $B$. Continuing like this, we have the edge corresponding to the last arc of $W$ which joins a vertex in $B$ to a vertex in $A$, since there are even number of arcs in $W$. If there are any remaining edges in $W$ after the last arc, then the corresponding vertices and edges are in $A$. This implies that $W'$ starts at $v_i'$ and ends at $v_j'$.
	
	Conversely, let $W'$ be a walk in $G^A$ of length $k$ which starts at $v_i'$ and ends at $v_j'$. We prove that there is a walk or an alternating walk in $G$ as in part~(i) of the theorem.
	
	\textit{Case~I.} Suppose all the edges of $W'$ join the vertices of $A$ only. Then from the construction of $G^A$, it corresponds to a walk of length $k$ joining $v_i$ and $v_j$ in $G$ containing no arc. 
	
	\textit{Case~II.} Suppose an edge of $W'$ joins a vertex in $A$ and a vertex in $B$. As per the construction of $G^A$, $W'$ corresponds to a walk, say $W$ in $G$ of length $k$, which is found as follows. Since both $v_i'$ and $v_j'$ are in $A$, there must be even number of edges in $W'$ which join vertices in $A$ with vertices in $B$. The first such edge in $W'$ joins a vertex in $A$ with a vertex in $B$. This edge corresponds to an arc in $G$ with forward direction, because $v_i'\in A$. The subsequent edge in $W'$ which joins a vertex in $B$ with a vertex in $A$ corresponds to an arc in $G$ with a backward direction. This alternation continues with the even number of edges in $W'$ joining the vertices in $A$ with the vertices in $B$ are associated to the arcs in $G$ with alternating directions. The remaining edges in $W'$ which are not part of the alternating sequence, correspond to the edges in $G$. Thus, $W$ is an alternating walk of length $k$ in $G$ from $v_i$ to $v_j$ containing even number of arcs, with the first arc in the forward direction.
	
	Hence the sum of the number of walks in $G$ from $v_i$ to $v_j$ of length $k$ containing no arcs, and the number of alternating walks in $G$ from $v_i$ to $v_j$ of length $k$ containing only even number of arcs with the first arc in the forward direction, is equal to the number of walks in $G^A$ of length $k$ that starts at $v'_i$ and ends at $v'_j$. 
	
	In similar manner, we can prove that the numbers given in (b), (c) and (d) are equal to the numbers given in parts (ii), (iii) and (iv), respectively in the theorem.
	
	Since $\mathcal{I}(G)=A(G^A)$, we have $\mathcal{I}(G)^k=A(G^A)^k$. Hence the result follows.
\end{proof}
With the help of the proof of Theorem~\ref{entry}, we can observe the following.
\begin{note}\normalfont
	\begin{enumerate}[(a)]
		\item Each alternating path $W$ in $G$ corresponds to a path $W'$ in $G^A$, since by definition each repeated vertex and each repeated edge in $W$ corresponds to its two distinct copies in $W'$.
		
		\item Each path $W'$ in $G^A$ corresponds to either a path or an alternating path in $G$, depending on whether $W'$ contains no edge joining the vertices of $A$ and $B$ (path) or at least one such edge (alternating path).
		
		\item Each alternating cycle $W$ with an odd number of arcs in $G$ corresponds to a path $W'$ in $G^A$, since the odd number of arcs ensures that the starting vertex does not occur in the interior of the cycle. Thus $W$ is an alternating path, and by (a), $W'$ is a path.
		
		\item Each alternating cycle $W$ with an even number of arcs in $G$ corresponds to a cycle $W'$ in $G^A$, since by definition each repeated vertex (excluding the terminal vertex) and each repeated edge in $W$ corresponds to its two distinct copies in $W'$. The even number of arcs guarantees that $W'$ is a cycle.
		
		\item Each cycle $W'$ in $G^A$ corresponds to either a cycle or an alternating cycle with an even number of arcs in $G$, depending on whether $W'$ contains no edge joining the vertices of $A$ and $B$ (cycle) or at least one such edge (alternating cycle).
	\end{enumerate}
\end{note}
Let $G$ be a mixed graph. An \textit{odd cycle} in $G$ is a cycle having odd number of edges. An \textit{odd alternating cycle}  in $G$ is an alternating cycle having odd number of arcs and edges.

\begin{defn}\normalfont
	A loopless mixed graph $G$ is said to satisfy the \textit{associated-bipartite property} (shortly, \textit{AB property}), if it has no odd cycle  and no odd alternating cycle with even number of arcs.
\end{defn}
\begin{lemma}\label{ABbipartite}
	A loopless mixed graph $G$ has the AB property if and only if $G^A$ is bipartite.
\end{lemma}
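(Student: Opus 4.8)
The plan is to reduce the statement to the classical characterization that an undirected (multi)graph is bipartite if and only if it contains no closed walk of odd length (equivalently, no odd cycle), and then to set up an explicit correspondence between the odd cycles of $G^A$ and the two forbidden structures in the definition of the AB property. Recall the partition $V_{G^A}=A\cup B$ with $A=\{v'_1,\ldots,v'_n\}$ and $B=\{v''_1,\ldots,v''_n\}$ used earlier. The only edges of $G^A$ joining $A$ to $B$ are the inter-part edges $v'_i\sim v''_j$, which by construction correspond bijectively to the arcs $v_i\to v_j$ of $G$, while the intra-part edges (those inside $A$ or inside $B$) correspond to the edges of $G$. I would analyse walks of $G^A$ by projecting each vertex $v'_i$ or $v''_i$ to $v_i$ and each edge to its corresponding edge or arc of $G$, and conversely by lifting walks of $G$ into $G^A$.

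For the forward direction, suppose $G^A$ is not bipartite and fix an odd cycle $C$ in $G^A$. First I would observe that, since $C$ is a closed walk and only inter-part edges switch between $A$ and $B$, the number of inter-part edges traversed by $C$ is even; hence the projected closed walk $W$ in $G$ contains an even number of arcs. If this number is zero, then $C$ lies entirely in $A$ (or entirely in $B$) and projects to an odd cycle of $G_u$, so $G$ has an odd cycle. Otherwise $W$ uses arcs, and the crux is to check that $W$ is an alternating cycle with an even number of arcs: (a) consecutive arcs alternate between forward and backward directions, because an inter-part edge traversed from $A$ to $B$ projects to an arc read forward while one traversed from $B$ to $A$ projects to an arc read backward, and these crossings necessarily alternate along a closed walk; (b) condition~(v) holds because the arcs of $G$ biject with the inter-part edges, each used at most once in the simple cycle $C$; (c) conditions~(iii)--(iv) hold because each edge of $G$ lifts to exactly two edges of $G^A$ (one in $A$, one in $B$), so it is traversed at most twice by $C$, and if it is traversed twice then $C$ must switch parts an odd number of times between the two traversals, i.e.\ an odd number of arcs separates the two occurrences in $W$. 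In either case the AB property fails.

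For the converse, suppose the AB property fails. If $G$ has an odd cycle using edges only, its image inside $A$ is immediately an odd cycle of $G^A$. If instead $G$ has an odd alternating cycle $W$ with an even number of arcs, I would lift $W$ to a walk in $G^A$ by choosing the starting part ($A$ or $B$) according to the direction of the first arc, keeping the current part fixed along edges and toggling it along each arc; the alternation of arc directions makes these toggles consistent, and the even arc count returns the lift to its starting vertex, producing a closed walk in $G^A$ of the same odd length as $W$. Since an odd closed walk forces an odd cycle, $G^A$ is not bipartite.

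I expect the main obstacle to be the forward direction, specifically the careful verification that the projection of a simple cycle of $G^A$ genuinely satisfies all of the alternating-cycle conditions~(iii)--(v) --- above all the parity statement~(iv), which hinges on translating ``switching between $A$ and $B$'' into ``an odd number of intervening arcs.'' The bookkeeping of which inter-part crossing yields a forward versus a backward arc, and confirming that these directions alternate around the whole cycle, is where the argument must be made airtight; the reduction to the no-odd-cycle criterion and the lifting in the converse are comparatively routine. Minor degenerate cases (loops of $G$ giving loops of $G^A$, and parallel edges or digon-type pairs giving even multi-cycles) are consistent with the stated conventions and can be absorbed into the same correspondence.
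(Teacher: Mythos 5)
Your proposal is correct and follows essentially the same route as the paper: both use the natural bipartition $V_{G^A}=A\cup B$, classify odd cycles of $G^A$ by whether they stay in one part (projecting to odd cycles of $G_u$) or cross parts an even number of times (projecting to odd alternating cycles with an even number of arcs), and lift the forbidden structures of $G$ back to odd closed walks in $G^A$ for the converse. In fact your write-up is more careful than the paper's, which merely asserts the type-(ii) correspondence and dismisses the converse with ``by retracing,'' whereas you explicitly verify the alternation of arc directions and conditions (iii)--(v) of the alternating-cycle definition.
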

\begin{proof}
	Let $G$ be a mixed graph with $V_G=\{v_1,v_2,\ldots,v_n\}$. Then $V_{G^A}=\{v'_1,v'_2,\ldots,v'_n,$ $v''_1,v''_2,\ldots,v''_n\}$. Let $A_1=\{v'_1,v'_2,\ldots,v'_n\}$ and $A_2=\{v''_1,v''_2,\ldots,v''_n\}$. Suppose $G$ has the AB property. Then $G$ has no odd cycle and no odd alternating cycle with even number of arcs. 
	
	An odd cycle (if exist) say $C$ in $G^A$ satisfies one of the following.
	
	(i) $C$ contains only the vertices of $A_1$ or only the vertices of $A_2$;
	
	(ii) $C$ contains at least one vertex from each of the sets $A_1$ and $A_2$, and even number of edges which join the vertices in $A_1$ with the vertices in $A_2$.
	
	Since $G$ has no odd cycle, $G^A$ has no odd cycle as in (i). Since $G$ has no odd alternating cycle with even number of arcs, $G^A$ has no odd cycle as in (ii). As per the above, we have that $G^A$ has no odd cycle, i.e., $G^A$ is bipartite. By retracing the above, the converse can be proved.
\end{proof}
\begin{thm}
	Let $G$ be a simple mixed graph on $n$ vertices. Then the following conditions are equivalent.
	\begin{enumerate}[(i)]
		\item $G$ has the AB property;
		\item if $P_{\mathcal{I}(G)}(x)=x^{2n}+c_1x^{2n-1}+c_2x^{2n-2}+\cdots+c_{2n}$ is the characteristic polynomial of $\mathcal{I}(G)$, then $c_{2k+1}=0$ for $k=0,1,\ldots,n-1$;
		\item the $\mathcal{I}$-eigenvalues of $G$ are symmetric with respect to the origin, i.e., if $\lambda$ is an eigenvalue of $\mathcal{I}(G)$ with multiplicity $k$, then $-\lambda$ is also an eigenvalue of $\mathcal{I}(G)$ with multiplicity $k$.
	\end{enumerate} 
\end{thm}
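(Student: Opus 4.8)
The plan is to reduce the whole statement to the classical spectral characterisation of bipartite graphs, applied to the associated graph $G^A$. Since $G$ is simple, $G^A$ is a simple graph on $2n$ vertices and $\mathcal{I}(G)=A(G^A)$; hence the characteristic polynomial in (ii) is $P_{A(G^A)}(x)$ and the $\mathcal{I}$-eigenvalues in (iii) are precisely the eigenvalues of $A(G^A)$. Moreover, by Lemma~\ref{ABbipartite}, condition (i) holds if and only if $G^A$ is bipartite. So it suffices to establish, writing $H:=G^A$, the two equivalences (ii)~$\Leftrightarrow$~(iii) and $\big(H\text{ bipartite}\big)\Leftrightarrow$~(iii); together with the Lemma these yield the full equivalence of (i), (ii), (iii).

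I would first dispose of the purely algebraic equivalence (ii)~$\Leftrightarrow$~(iii), which uses no graph structure. If every $c_{2k+1}$ vanishes, then $P_{\mathcal{I}(G)}(x)$ is a polynomial in $x^2$, so $P_{\mathcal{I}(G)}(-x)=P_{\mathcal{I}(G)}(x)$, and therefore $\lambda$ is a root exactly when $-\lambda$ is, with equal multiplicity, giving (iii). Conversely, if the spectrum is symmetric about the origin, then since $2n$ is even
\[
P_{\mathcal{I}(G)}(-x)=\prod_{i=1}^{2n}\big(-x-\bm{\lambda}_i(G)\big)=\prod_{i=1}^{2n}\big(x-(-\bm{\lambda}_i(G))\big)=P_{\mathcal{I}(G)}(x),
\]
the last equality because $\{-\bm{\lambda}_i(G)\}$ and $\{\bm{\lambda}_i(G)\}$ coincide as multisets; hence $P_{\mathcal{I}(G)}$ is even and all $c_{2k+1}$ vanish.

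For the direction $\big(H\text{ bipartite}\big)\Rightarrow$~(iii), I would order the vertices of $H$ according to its bipartition (the genuine one provided by Lemma~\ref{ABbipartite}, not the partition of \eqref{adjmatrix block}), so that $A(H)=\left[\begin{smallmatrix}\mathbf{0}&B\\ B^T&\mathbf{0}\end{smallmatrix}\right]$. If $\left[\begin{smallmatrix}v_X\\ v_Y\end{smallmatrix}\right]$ is an eigenvector for $\lambda$, then $\left[\begin{smallmatrix}v_X\\ -v_Y\end{smallmatrix}\right]$ is an eigenvector for $-\lambda$, and this sign flip is a linear isomorphism between the $\lambda$- and $(-\lambda)$-eigenspaces, so their multiplicities agree, which is (iii).

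The remaining implication (iii)~$\Rightarrow\big(H\text{ bipartite}\big)$ is the genuinely structural step and the one I expect to be the main obstacle, since it must recover a combinatorial property from spectral data alone. Here I would use that $H$ is simple, so $A(H)$ has nonnegative integer entries and $\operatorname{tr}\!\big(A(H)^{m}\big)$ equals the number of closed walks of length $m$ in $H$. Symmetry of the spectrum forces $\operatorname{tr}\!\big(A(H)^{2k+1}\big)=\sum_{i=1}^{2n}\bm{\lambda}_i(G)^{2k+1}=0$ for every $k\ge 0$, because the contributions of $\lambda$ and $-\lambda$ cancel. As closed-walk counts are nonnegative, $H$ has no closed walk of odd length, hence no odd cycle, so $H$ is bipartite; by Lemma~\ref{ABbipartite} this is (i). The crux is exactly this nonnegativity of closed-walk counts, which converts ``all odd traces vanish'' into ``no odd cycle''; it is cleaner to argue through the trace identity than through a Sachs-type expansion of the coefficients $c_{2k+1}$, where cancellation among distinct odd cycles would obscure the conclusion.
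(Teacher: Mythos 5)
Your proposal is correct and takes essentially the same route as the paper: reduce everything to the associated graph via $\mathcal{I}(G)=A(G^A)$ and Lemma~\ref{ABbipartite}, then apply the classical spectral characterization of bipartite graphs. The only difference is that the paper simply cites this characterization (\cite[Theorem~3.14]{bapatbook}), whereas you prove it from scratch (polynomial parity for (ii)$\Leftrightarrow$(iii), the eigenvector sign-flip for bipartite $\Rightarrow$ symmetric spectrum, and the odd-trace/closed-walk count for the converse), all of which is sound.
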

\begin{proof}
	Since the spectrum of $\mathcal{I}(G)$ coincides with the spectrum of $A(G^A)$, the result directly follows from Lemma~\ref{ABbipartite} and \cite[Theorem~3.14]{bapatbook}.
\end{proof}
\section{$\mathcal{I}$-spectra of certain special mixed graphs}\label{S5 spectra}
\begin{thm}\label{thm-rs regular}
	Let $G$ be a mixed graph on $n$ vertices. Then $G$ is an $(r,s)$-regular mixed graph if and only if $r+s$ and $r-s$ are  eigenvalues of $\mathcal{I}(G)$ with corresponding eigenvectors $\boldsymbol{1}_{2n}$ and $\begin{bmatrix}
		\boldsymbol{1}_n^T&
		-\boldsymbol{1}_n^T
	\end{bmatrix}^T$, respectively.
\end{thm}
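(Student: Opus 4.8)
The plan is to work directly from the block form of $\mathcal{I}(G)$ recorded in \eqref{adjmatrix block}, translating $(r,s)$-regularity into statements about how the blocks $A(G_u)$ and $\vec{A}(G_d)$ act on the all-ones vector $\mathbf{1}_n$. First I would note that by the definition of $A(G_u)$ the $i$-th row sum of $A(G_u)$ equals $d(v_i)$ (the off-diagonal entries count edges and the diagonal entry $2l$ counts each loop twice, which is precisely the undirected degree), that the $i$-th row sum of $\vec{A}(G_d)$ equals $d^+(v_i)$, and that the $i$-th row sum of $\vec{A}(G_d)^T$ equals the $i$-th column sum of $\vec{A}(G_d)$, namely $d^-(v_i)$. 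Hence $G$ is $(r,s)$-regular if and only if
$$A(G_u)\mathbf{1}_n=r\,\mathbf{1}_n,\qquad \vec{A}(G_d)\mathbf{1}_n=s\,\mathbf{1}_n,\qquad \vec{A}(G_d)^T\mathbf{1}_n=s\,\mathbf{1}_n.$$

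For the forward implication, assuming these three row-sum identities, I would multiply the block matrix in \eqref{adjmatrix block} by $\mathbf{1}_{2n}=\big[\mathbf{1}_n^T\ \ \mathbf{1}_n^T\big]^T$ and by $\big[\mathbf{1}_n^T\ \ -\mathbf{1}_n^T\big]^T$. A one-line block computation returns $(r+s)$ and $(r-s)$ times the respective vectors, which exhibits both as eigenvalues of $\mathcal{I}(G)$ with the claimed eigenvectors. For the converse, I would start from the two eigenvalue equations and read off their four scalar block equations: the two top-block equations are $A(G_u)\mathbf{1}_n\pm\vec{A}(G_d)\mathbf{1}_n=(r\pm s)\mathbf{1}_n$, while the bottom blocks give $\vec{A}(G_d)^T\mathbf{1}_n+A(G_u)\mathbf{1}_n=(r+s)\mathbf{1}_n$. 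Adding and subtracting the two top-block equations isolates $A(G_u)\mathbf{1}_n=r\,\mathbf{1}_n$ and $\vec{A}(G_d)\mathbf{1}_n=s\,\mathbf{1}_n$ respectively, and substituting the first of these into a bottom-block equation yields $\vec{A}(G_d)^T\mathbf{1}_n=s\,\mathbf{1}_n$. By the characterization in the first paragraph this is exactly $(r,s)$-regularity.

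The argument presents no genuine obstacle — it is a direct verification resting entirely on the block decomposition \eqref{adjmatrix block}. The only point demanding care is the bookkeeping in the first step: one must apply the loop-doubling convention in the diagonal of $A(G_u)$ correctly and keep straight that out-degrees are row sums while in-degrees are column sums of $\vec{A}(G_d)$, since a slip there would interchange the roles of $r+s$ and $r-s$ or conflate out-regularity with in-regularity.
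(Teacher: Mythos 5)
Your proposal is correct and follows essentially the same route as the paper: both work from the block decomposition \eqref{adjmatrix block}, get the forward direction from the row sums of $A(G_u)$, $\vec{A}(G_d)$ and $\vec{A}(G_d)^T$, and recover $(r,s)$-regularity in the converse by combining the blockwise (equivalently, entrywise degree) equations coming from the two eigenvector relations. The only cosmetic difference is that you add and subtract the vector equations while the paper solves the corresponding scalar degree equations $d(u)\pm d^{+}(u)=r\pm s$, $d(u)+d^{-}(u)=r+s$ directly.
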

\begin{proof}
	Since $G$ is a mixed graph, we can express $\mathcal{I}(G)$ as a $2\times 2$ block matrix as described in \eqref{adjmatrix block}. 
	Suppose $G$ is $(r,s)$-regular, the row sums of the blocks $A(G_u)$, $\vec{A}(G_d)$, and $\vec{A}(G_d)^T$ are $r,s$, and $s$, respectively. Consequently $r+s$ and $r-s$ are eigenvalues of $\mathcal{I}(G)$ with corresponding eigenvectors $\boldsymbol{1}_{2n}$ and $\begin{bmatrix}
		\boldsymbol{1}_n^T&
		-\boldsymbol{1}_n^T
	\end{bmatrix}^T$, respectively. 
	
	Conversely, assume that $r+s$ and $r-s$ are  eigenvalues of $\mathcal{I}(G)$ with corresponding eigenvectors $\boldsymbol{1}_{2n}$ and $\begin{bmatrix}
		\boldsymbol{1}_{n}^T&
		-\boldsymbol{1}_n^T
	\end{bmatrix}^T$, respectively. Then for all $u\in V_G$, the following conditions hold: $d(u)+d^+(u)=r+s$, $d(u)+d^-(u)=r+s$, $d(u)-d^+(u)=r-s$, and $d^-(u)-d(u)=-(r-s)$. These equations imply that $d(u)=r$ and $d^+(u)=d^-(u)=s$ for all $u\in V_G$. Hence $G$ is $(r,s)$-regular.
\end{proof}

Let $G$ be a mixed graph. For $v\in V_G$, $G-v$ denotes the mixed graph obtained by deleting the vertex $v$ along with all edges and arcs incident with $v$ in $G$.
For $e\in E_G$, $G-e$ represents the mixed graph obtained by deleting the edge $e$ from $G$.
Similarly, for $a\in \vec{E}_G$, $G-a$ denotes the mixed graph obtained by deleting the arc $a$ from $G$.

For $u,v\in V_G$, we denote
\begin{itemize}
	\item $u\leftrightarrows v$, if there is exactly one arc from $u$ to $v$, and exactly one arc from $v$ to $u$;
	\item $u\ccombinedarroww v$, if there is exactly one arc from $u$ to $v$, and exactly one edge joining $u$ and $v$;
	\
	\item  $u\ccombinedarrowss v$, if there is exactly one arc from $u$ to $v$, and exactly one arc from $v$ to $u$, and exactly one edge joining $u$ and $v$.
\end{itemize}
\begin{thm}
	Let $G$ be a mixed graph with $V_G=\{v_1,v_2,\ldots,v_n\}$. Let $G_j$ be the mixed graph obtained from $G$ by taking a new vertex $u$, and joining it with $v_j$ in one of the following way: $v_j\overset{1}{\rightarrow} u$, $u\overset{1}{\rightarrow} v_j$, $v_j\overset{1}{\sim} u$, $v_j\rightleftarrows u$, $v_j\ccombinedarroww u$, $u\ccombinedarroww v_j$ and $v_j\ccombinedarrowss u$. Then we have the following.
	\begin{enumerate}[(i)]
		\item If $v_j\overset{1}{\rightarrow} u$, then $P_{\mathcal{I}(G_j)}(x)=x^2P_{\mathcal{I}(G)}(x)-xP_{\mathcal{I}(G)[j]}(x).$
		\item If $u\overset{1}{\rightarrow} v_j$, then $P_{\mathcal{I}(G_j)}(x)=x^2P_{\mathcal{I}(G)}(x)-xP_{\mathcal{I}(G)[n+j]}(x).$
		\item If $v_j\overset{1}{\sim} u$, then $P_{\mathcal{I}(G_j)}(x)=x^2P_{\mathcal{I}(G)}(x)-x\{P_{\mathcal{I}(G)[j]}(x)+P_{\mathcal{I}(G)[n+j]}(x)\}+P_{\mathcal{I}(G-v_j)}(x).$
		\item If $v_j\rightleftarrows u$, then $P_{\mathcal{I}(G_j)}(x)=x^2P_{\mathcal{I}(G)}(x)-x\{P_{\mathcal{I}(G)[j]}(x)+P_{\mathcal{I}(G)[n+j]}(x)\}+P_{\mathcal{I}(G-v_j)}(x).$
		\item If $v_j\ccombinedarroww u$, then $P_{\mathcal{I}(G_j)}(x)=x^2P_{\mathcal{I}(G)}(x)-x\{2P_{\mathcal{I}(G)[j]}(x)+P_{\mathcal{I}(G)[n+j]}(x)+(-1)^nP_{\mathcal{I}(G)[n+j|j]}(x)+(-1)^nP_{\mathcal{I}(G)[j|n+j]}(x)\}+P_{\mathcal{I}(G-v_j)}(x).$
		\item If $u\ccombinedarroww v_j$, then $P_{\mathcal{I}(G_j)}(x)=x^2P_{\mathcal{I}(G)}(x)-x\{P_{\mathcal{I}(G)[j]}(x)+2P_{\mathcal{I}(G)[n+j]}(x)+(-1)^nP_{\mathcal{I}(G)[n+j|j]}(x)+(-1)^nP_{\mathcal{I}(G)[j|n+j]}(x)\}+P_{\mathcal{I}(G-v_j)}(x).$
		\item If $v_j\ccombinedarrowss u$, then $P_{\mathcal{I}(G_j)}(x)=x^2P_{\mathcal{I}(G)}(x)-2x\{P_{\mathcal{I}(G)[j]}(x)+P_{\mathcal{I}(G)[n+j]}(x)+(-1)^nP_{\mathcal{I}(G)[n+j|j]}(x)+(-1)^nP_{\mathcal{I}(G)[j|n+j]}(x)\}.$
	\end{enumerate}
\end{thm}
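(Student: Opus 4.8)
The plan is to exhibit $\mathcal I(G_j)$ as a bordered matrix and collapse all seven cases into a single determinant evaluation. Relabelling the index set of $\mathcal I(G_j)$ in the order $v_1',\dots,v_n',v_1'',\dots,v_n'',u',u''$ only permutes rows and columns simultaneously, so it preserves the characteristic polynomial; with this ordering Definition~\ref{adjacency} gives
\[
\mathcal I(G_j)=\begin{bmatrix}\mathcal I(G) & B\\ B^{T} & \mathbf 0_{2\times 2}\end{bmatrix},\qquad B=\begin{bmatrix} b_1 & b_2\end{bmatrix},
\]
where $b_1,b_2\in\mathbb R^{2n}$ are the columns indexed by $u'$ and $u''$; the corner block is $\mathbf 0_{2\times 2}$ since $u$ carries no loop and no directed loop. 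Writing $\epsilon=e_j$ and $\eta=e_{n+j}$ for the two standard basis vectors of $\mathbb R^{2n}$ attached to $v_j$, the defining rules of $\mathcal I$ show that an edge $v_j\sim u$ contributes $\epsilon$ to $b_1$ and $\eta$ to $b_2$, an arc $u\to v_j$ contributes $\eta$ to $b_1$, and an arc $v_j\to u$ contributes $\epsilon$ to $b_2$. Hence the seven attachments produce, in order, the column pairs $(b_1,b_2)$ equal to $(\mathbf 0,\epsilon)$, $(\eta,\mathbf 0)$, $(\epsilon,\eta)$, $(\eta,\epsilon)$, $(\epsilon,\epsilon+\eta)$, $(\epsilon+\eta,\eta)$, and $(\epsilon+\eta,\epsilon+\eta)$.

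Next I would compute $P_{\mathcal I(G_j)}(x)=\det\!\big(xI_{2n+2}-\mathcal I(G_j)\big)$ by a Schur complement. Set $M=xI_{2n}-\mathcal I(G)$. For every $x$ that is not an $\mathcal I$-eigenvalue of $G$ the block $M$ is invertible, and complementing it gives
\[
P_{\mathcal I(G_j)}(x)=\det(M)\,\det\!\big(xI_2-B^{T}M^{-1}B\big)=x^2\det M-x\,\operatorname{tr}\!\big(B^{T}\operatorname{adj}(M)\,B\big)+\frac{\det\!\big(B^{T}\operatorname{adj}(M)\,B\big)}{\det M},
\]
where I used $\det(xI_2-S)=x^2-x\operatorname{tr}S+\det S$ together with $\det(M)\,M^{-1}=\operatorname{adj}(M)$. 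Every entry of the $2\times2$ matrix $B^{T}\operatorname{adj}(M)\,B$ is a combination of the four adjugate entries on rows and columns $\{j,n+j\}$, and these are exactly the minor characteristic polynomials appearing in the statement: $\operatorname{adj}(M)_{jj}=\det(M[j])=P_{\mathcal I(G)[j]}(x)$, likewise $\operatorname{adj}(M)_{n+j,n+j}=P_{\mathcal I(G)[n+j]}(x)$, while $\operatorname{adj}(M)_{j,n+j}=(-1)^{n}P_{\mathcal I(G)[n+j|j]}(x)$ and $\operatorname{adj}(M)_{n+j,j}=(-1)^{n}P_{\mathcal I(G)[j|n+j]}(x)$, the sign $(-1)^{n}=(-1)^{j+(n+j)}$ being the cofactor sign. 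Substituting each of the seven column pairs into $\operatorname{tr}\!\big(B^{T}\operatorname{adj}(M)\,B\big)=b_1^{T}\operatorname{adj}(M)b_1+b_2^{T}\operatorname{adj}(M)b_2$ reproduces verbatim the bracketed middle terms of (i)--(vii), including the coefficient $2$ that appears in (v), (vi), and (vii).

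The delicate point is the last summand $\det\!\big(B^{T}\operatorname{adj}(M)\,B\big)/\det M$, which has to come out as a polynomial, namely $P_{\mathcal I(G-v_j)}(x)$ or $0$. To see this I would apply Jacobi's identity for the minors of the adjugate to the $2\times2$ principal submatrix of $\operatorname{adj}(M)$ on rows and columns $\{j,n+j\}$:
\[
\det\begin{bmatrix}\operatorname{adj}(M)_{jj}&\operatorname{adj}(M)_{j,n+j}\\[2pt]\operatorname{adj}(M)_{n+j,j}&\operatorname{adj}(M)_{n+j,n+j}\end{bmatrix}=\det(M)\,\det\!\big(M[\{j,n+j\}]\big)=\det(M)\,P_{\mathcal I(G-v_j)}(x),
\]
the last equality because deleting rows and columns $j,n+j$ from $M$ leaves $xI_{2n-2}-\mathcal I(G-v_j)$. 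In cases (iii)--(vi) the columns $b_1,b_2$ form a basis of $\langle\epsilon,\eta\rangle$, so $B=\begin{bmatrix}\epsilon&\eta\end{bmatrix}T$ with $\det T=\pm1$; then $\det\!\big(B^{T}\operatorname{adj}(M)\,B\big)=(\det T)^2\det\big(\operatorname{adj}(M)_{\{j,n+j\}}\big)=\det(M)\,P_{\mathcal I(G-v_j)}(x)$, and the quotient is the clean polynomial $P_{\mathcal I(G-v_j)}(x)$. In cases (i) and (ii) one column is $\mathbf 0$, and in case (vii) the two columns coincide, so $B^{T}\operatorname{adj}(M)\,B$ is singular and the quotient vanishes. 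Since both $\det\!\big(B^{T}\operatorname{adj}(M)\,B\big)$ and $\det(M)\,P_{\mathcal I(G-v_j)}(x)$ are polynomials, the division is exact, so the master identity---valid a priori only off the finite set of $\mathcal I$-eigenvalues of $G$---holds for all $x$, and reading off the seven column pairs yields (i)--(vii). I expect the only real friction to be the sign bookkeeping of the off-diagonal cofactors and the Jacobi step that certifies the last term is polynomial.
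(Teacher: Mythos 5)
Your proposal is correct in substance and takes a genuinely different route from the paper. The paper's written proof covers only part (i): it orders the vertices of $G_j$ so that $xI-\mathcal{I}(G_j)$ is an explicit $4\times4$ block array, performs two successive Laplace expansions along the rows/columns of $u'$ and $u''$, and then declares the remaining six parts ``similar''. Your bordered-matrix identity
\[
P_{\mathcal{I}(G_j)}(x)=x^{2}\det M-x\,\operatorname{tr}\bigl(B^{T}\operatorname{adj}(M)B\bigr)+\frac{\det\bigl(B^{T}\operatorname{adj}(M)B\bigr)}{\det M},\qquad M=xI_{2n}-\mathcal{I}(G),
\]
handles all seven attachments uniformly: the seven column pairs $(b_1,b_2)$ are read off correctly from Definition~\ref{adjacency}, the trace term reproduces each bracketed expression, and the Jacobi step explains structurally both why the constant term equals $P_{\mathcal{I}(G-v_j)}(x)$ in cases (iii)--(vi) (there $B$ spans $\langle e_j,e_{n+j}\rangle$, so $\det(B^T\operatorname{adj}(M)B)=\det M\cdot P_{\mathcal{I}(G-v_j)}(x)$) and why it disappears in (i), (ii), (vii) (there $B^{T}\operatorname{adj}(M)B$ is singular). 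This is more systematic than the paper's case-by-case expansion and it is the only place where the absence of the $P_{\mathcal{I}(G-v_j)}$ term in (vii) is actually accounted for rather than asserted.

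One step needs care: your identification $\operatorname{adj}(M)_{j,n+j}=(-1)^{n}P_{\mathcal{I}(G)[n+j|j]}(x)$. What is literally true is $\operatorname{adj}(M)_{j,n+j}=(-1)^{n}\det\bigl((xI-\mathcal{I}(G))[n+j|j]\bigr)$, i.e.\ $(-1)^n$ times a \emph{minor of the characteristic matrix}. For a principal deletion the minor of the characteristic matrix coincides with the characteristic polynomial of the deleted matrix, so parts (i)--(iv) are unambiguous; but for a non-principal deletion $\det\bigl((xI-A)[n+j|j]\bigr)\neq\det\bigl(xI_{2n-1}-A[n+j|j]\bigr)$ in general (already for $A=\mathbf{0}_{2\times2}$ one side is $0$ and the other is $x$). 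Under the paper's stated convention that $P_{N}(x)$ denotes the characteristic polynomial of the square matrix $N$, formulas (v)--(vii) are in fact false as written: for $G=K_1$ with $v_1\ccombinedarroww u$ one has $G_1^A=P_4$, whose characteristic polynomial is $x^4-3x^2+1$, while the literal reading of (v) yields $x^4-x^2+1$; the minor-of-characteristic-matrix reading gives the correct $x^4-3x^2+1$. So this is a defect in the theorem's notation rather than in your argument --- your proof is exactly right for the intended meaning of the symbols --- but you should state explicitly that $P_{\mathcal{I}(G)[n+j|j]}(x)$ must be interpreted as $\det\bigl((xI-\mathcal{I}(G))[n+j|j]\bigr)$. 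The paper's own write-up never confronts this point because it works out only case (i), where no off-diagonal minor occurs.
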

\begin{proof}
	If $v_j\overset{1}{\rightarrow} u$, then by a suitable ordering of vertices of $G_j$, we have
	$$\mathcal{I}(G_j)=\begin{bmatrix}
		A(G_u) & \mathbf{0}_{n\times 1} & \vec{A}(G_d) &E_j\\
		\mathbf{0}_{1\times n} & 0 &\mathbf{0}_{1\times n} & 0\\
		\vec{A}(G_d)^T & \mathbf{0}_{n\times 1} &  A(G_u) & \mathbf{0}_{n\times 1}\\
		E_j^T & 0 & \mathbf{0}_{1\times n} & 0
	\end{bmatrix},
	$$
	where $E_j$ denotes the $n\times 1$ matrix with $(j,1)$-th entry is $1$ and all other entries are $0$. Now
	\begin{eqnarray}
		P_{\mathcal{I}(G_j)}(x)&=&|xI-\mathcal{I}(G_j)| \nonumber\\
		&=&\left|\begin{matrix}
			xI-A(G_u) & \mathbf{0}_{n\times 1} & -\vec{A}(G_d) & -E_j\\
			\mathbf{0}_{1\times n} & x & \mathbf{0}_{1\times n} & 0\\
			-\vec{A}(G_d)^T & \mathbf{0}_{n\times 1} &  xI-A(G_u) & \mathbf{0}_{n\times 1}\\
			-E_j^T & 0 & \mathbf{0}_{1\times n} & x
		\end{matrix}\right| \nonumber\\
		&=&(-1)^{(n+1)+(n+1)}x\left|\begin{matrix}
			xI-A(G_u) & -\vec{A}(G_d) & -E_j\\
			-\vec{A}(G_d)^T &  xI-A(G_u) & \mathbf{0}_{n\times 1}\\
			-E_j^T & \mathbf{0}_{1\times n} & x
		\end{matrix}\right| \nonumber\\
		&=&x\left[(-1)^{(2n+1)+(2n+1)}x\left|\begin{matrix}
			xI-A(G_u) & -\vec{A}(G_d)\\
			-\vec{A}(G_d)^T & xI-A(G_u)
		\end{matrix}\right|\right.\nonumber\\
		& & \left.+(-1)^{(2n+1)+j}\times(-1)\left|\begin{matrix}
			(xI-A(G_u))[0|j] & -\vec{A}(G_d) & -E_j\\
			-\vec{A}(G_d)^T[0|j] & xI-A(G_u) & \mathbf{0}_{n\times 1}
		\end{matrix}\right|\right]\nonumber\\
		&=& x^2P_{\mathcal{I}(G)}(x)-xP_{\mathcal{I}(G)[j]}(x). \nonumber
	\end{eqnarray}
	
	This completes the proof of part~(i). Similar to part~(i), the remaining parts can also be proved.
\end{proof}


\begin{thm}\label{thm-adj-complete-kpartite}
	\begin{enumerate}[(i)]
		\item The $\mathcal{I}$-spectrum of $K^M_{k(m)}$ is $2m(k-1)$ with multiplicity $1$, $-2m$ with multiplicity $k-1$, and $0$ with multiplicity $(2m-1)k$.
		\item The $\mathcal{I}$-spectrum of $K^D_{k(m)}$ is $\pm m(k-1)$ with multiplicity $1$, $\pm m$ with multiplicity $k-1$, and $0$ with multiplicity $2k(m-1)$.
	\end{enumerate}
\end{thm}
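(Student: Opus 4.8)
The plan is to identify the integrated adjacency matrix of each graph as a Kronecker product and then read off the spectrum from the known spectrum of the complete $k$-partite graph $K_{k(m)}$. First I would record the spectrum of $B := A(K_{k(m)})$, which is the central computation. Writing the vertices of $K_{k(m)}$ as pairs (part index, within-part index), the adjacency matrix decomposes as $B = (J_k - I_k) \otimes J_m$, since two vertices are adjacent precisely when they lie in different parts. The spectrum of $J_k - I_k$ is $k-1$ (multiplicity $1$) and $-1$ (multiplicity $k-1$), while that of $J_m$ is $m$ (multiplicity $1$) and $0$ (multiplicity $m-1$). By the Kronecker product rule for eigenvalues, $B$ therefore has eigenvalues $m(k-1)$ with multiplicity $1$, $-m$ with multiplicity $k-1$, and $0$ with multiplicity $k(m-1)$.

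For part~(i), I would observe that in $K^M_{k(m)}$ the undirected part is exactly $K_{k(m)}$, and the directed part has, for each pair of vertices in distinct parts, exactly one arc in each direction; hence $\vec{A}(G_d)$ is symmetric and equals $B$ as well. Substituting $A(G_u) = \vec{A}(G_d) = B$ into the block form \eqref{adjmatrix block} gives $\mathcal{I}(K^M_{k(m)}) = \bigl[\begin{smallmatrix} B & B \\ B & B \end{smallmatrix}\bigr] = J_2 \otimes B$. Since $J_2$ has eigenvalues $2$ and $0$, the Kronecker product rule yields the eigenvalues $2\mu$ (for each eigenvalue $\mu$ of $B$) together with $0$ repeated $km$ times. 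Collecting the zero contributions then gives $2m(k-1)$ (multiplicity $1$), $-2m$ (multiplicity $k-1$), and $0$ (multiplicity $k(m-1) + km = (2m-1)k$), as claimed.

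For part~(ii), I would note that $K^D_{k(m)}$ has no edges and, for every pair of vertices in distinct parts, exactly one arc in each direction, so it satisfies the hypothesis of Observation~\ref{observations}(2). Hence $\mathcal{I}(K^D_{k(m)}) = (J_2 - I_2) \otimes \vec{A}(K^D_{k(m)})$ with $\vec{A}(K^D_{k(m)}) = B$, and by that observation the $\mathcal{I}$-eigenvalues are exactly $\pm\mu$ ranging over the eigenvalues $\mu$ of $B$. Applying the spectrum of $B$ then gives $\pm m(k-1)$ (each of multiplicity $1$), $\pm m$ (each of multiplicity $k-1$), and $0$ (multiplicity $2k(m-1)$).

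The only genuine computation is the spectrum of $B$; everything else is bookkeeping with Kronecker products. The step most likely to need care is the multiplicity accounting for the eigenvalue $0$ in part~(i), where zeros arise both from the vanishing eigenvalue of $J_2$ and from the zero eigenvalues of $B$; I would verify that all multiplicities sum to $2km$ as a sanity check. As a further consistency check I would confirm that the sum of eigenvalues is $0$ and that the sum of their squares equals $4e(G)+2a(G)$, in agreement with Theorem~\ref{thm-traceA}.
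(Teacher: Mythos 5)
Your proposal is correct and matches the paper's own argument: the paper likewise writes $\mathcal{I}(K^M_{k(m)})=J_2\otimes[(J_k-I_k)\otimes J_m]$ and $\mathcal{I}(K^D_{k(m)})=(J_2-I_2)\otimes[(J_k-I_k)\otimes J_m]$, and reads off the spectra from the eigenvalues of $J_s$ via the Kronecker product eigenvalue rule. Your routing of part~(ii) through Observation~\ref{observations}(2) and your explicit multiplicity bookkeeping are only cosmetic variations on the same computation.
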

\begin{proof}
	We have $\mathcal{I}(K^M_{k(m)})=J_2\otimes[(J_k-I_k)\otimes J_m]$ and $\mathcal{I}(K^D_{k(m)})=(J_2-I_2)\otimes[(J_k-I_k)\otimes J_m]$. For a positive integer $s$, the eigenvalues of $J_s$ are $s$ with multiplicity $1$, and $0$ with multiplicity $s-1$. The proof follows by applying these in
	\cite[Lemma~3.24]{bapatbook}: ``Let $A$ and $B$ be symmetric matrices of order $m\times m$ and $n\times n$, respectively. Then the eigenvalues of $A\otimes B$ are given by $\lambda_i(A)\lambda_j(B)$ for $i=1,2,\ldots,m$ and $j=1,2,\ldots,n$''.
\end{proof}

\begin{cor}
	\begin{itemize}
		\item[(i)] The $\mathcal{I}$-spectrum of $K^M_n$ is $2(n-1)$ with multiplicity $1$, $-2$ with multiplicity $n-1$, and $0$ with multiplicity $n$.
		\item[(ii)] The $\mathcal{I}$-spectrum of $K^D_n$ is $\pm(n-1)$ with multiplicity $1$, and $\pm 1$ with multiplicity $n-1$.
	\end{itemize}
\end{cor}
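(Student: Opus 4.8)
The plan is to recognize both $K^M_n$ and $K^D_n$ as the extreme special case of the complete $k$-partite structures already handled in Theorem~\ref{thm-adj-complete-kpartite}, namely the case in which every part is a singleton. First I would check, purely at the level of the definitions given in Section~\ref{S2 prelims}, that $K^M_n=K^M_{n(1)}$ and $K^D_n=K^D_{n(1)}$. Indeed, when each $V_i$ contains exactly one vertex (so $m=1$ and $k=n$), the defining requirement of $K^M_{k(m)}$ that $u\sim v$, $u\rightarrow v$ and $v\rightarrow u$ hold for every $u\in V_i$, $v\in V_j$ with $i\neq j$ becomes precisely the requirement that every pair of distinct vertices be joined by an edge and by an arc in each direction, which is exactly the defining property of the complete mixed graph $K^M_n$. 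The identical check, dropping the undirected adjacency, identifies $K^D_n$ with $K^D_{n(1)}$.

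Next I would simply substitute $k=n$ and $m=1$ into the spectral data furnished by Theorem~\ref{thm-adj-complete-kpartite} and simplify. For part~(i), the eigenvalue $2m(k-1)$ becomes $2(n-1)$ with multiplicity $1$, the eigenvalue $-2m$ becomes $-2$ with multiplicity $k-1=n-1$, and the zero eigenvalue has multiplicity $(2m-1)k=n$. For part~(ii), $\pm m(k-1)$ becomes $\pm(n-1)$ with multiplicity $1$, $\pm m$ becomes $\pm 1$ with multiplicity $n-1$, and the zero eigenvalue has multiplicity $2k(m-1)=0$, so it contributes nothing to the spectrum.

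There is essentially no hard step here: the whole content is the identification of $K^M_n$ and $K^D_n$ as singleton-part complete $k$-partite graphs, after which the corollary is a direct specialization of an already proved theorem. The only point deserving a moment of care is the multiplicity bookkeeping, in particular confirming that the zero block in part~(ii) genuinely vanishes when $m=1$, and checking that the listed multiplicities sum to $2n$ in each case ($1+(n-1)+n=2n$ for $K^M_n$, and $2+2(n-1)=2n$ for $K^D_n$), so that the stated list accounts for all eigenvalues of the order-$2n$ matrix $\mathcal{I}(G)$.
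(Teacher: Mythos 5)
Your proposal is correct and follows exactly the paper's own route: the paper proves this corollary by setting $m=1$ and $k=n$ in Theorem~\ref{thm-adj-complete-kpartite}, which is precisely your specialization. Your additional checks (the identification $K^M_n=K^M_{n(1)}$, $K^D_n=K^D_{n(1)}$, the vanishing of the zero-eigenvalue block when $m=1$, and the multiplicities summing to $2n$) are sound bookkeeping that the paper leaves implicit.
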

\begin{proof}
	By taking $m=1$, and $k=n$ in Theorem~\ref{thm-adj-complete-kpartite}, the result follows. 
\end{proof}
An \textit{oriented graph} of a graph $G$ is the directed graph obtained from $G$ by replacing each edge in $G$ by an arc. 
\begin{thm}\label{cycle spectrum}
	\begin{enumerate}[(i)]
		\item If $G$ is the oriented graph of $P_n$ $(n\geq 2)$ with all its arcs have the same direction, then the $\mathcal{I}$-spectrum of $G$ is $1$, $-1$, and $0$ with multiplicity $n-1$, $n-1$, and $2$, respectively.
		\item If $G$ is the oriented graph of $P_n$ $(n\geq 2)$ with all its arcs have the alternating direction, then the $\mathcal{I}$-spectrum of $G$ is $0$ with multiplicity $n$, and $2\cos\frac{\pi k}{n+1}$ for $k=1,2,\ldots,n$.
		\item If $G$ is the oriented graph of $C_n$ $(n\geq 3)$ with all its arcs have the same direction, then the $\mathcal{I}$-spectrum of $G$ is $1$ and $-1$ each with multiplicity $n$.
		\item If $G$ is the oriented graph of $C_n$ $(n\geq 4)$ with all its arcs have the alternating direction, then the $\mathcal{I}$-spectrum of $G$ is $0$ with multiplicity $n$, and $2\cos\frac{2\pi k}{n}$ for $k=1,2,\ldots,n$.
	\end{enumerate}
\end{thm}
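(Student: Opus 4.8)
The plan is to exploit the identity $\mathcal{I}(G)=A(G^A)$ established in Section~\ref{S3 adjacency matrix}, so that each spectrum reduces to the ordinary adjacency spectrum of an explicitly identified associated graph $G^A$. In every case $G$ is an oriented graph, so its undirected part is empty, $A(G_u)=\mathbf{0}$, and hence by \eqref{adjmatrix block} the matrix $\mathcal{I}(G)$ has the off-diagonal block form $\begin{bmatrix}\mathbf{0}&\vec{A}(G_d)\\\vec{A}(G_d)^T&\mathbf{0}\end{bmatrix}$; equivalently, in $G^A$ every edge joins a vertex of $A_1=\{v_1',\ldots,v_n'\}$ to a vertex of $A_2=\{v_1'',\ldots,v_n''\}$, with $v_i'\sim v_j''$ precisely when $v_i\to v_j$ in $G$. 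Thus the whole problem becomes: draw $G^A$ for each orientation and recognize it as a disjoint union of standard graphs.

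First I would treat the two same-direction cases. For part~(i), orienting $P_n$ as $v_1\to v_2\to\cdots\to v_n$ produces in $G^A$ exactly the edges $v_i'\sim v_{i+1}''$ for $i=1,\ldots,n-1$; since each such $v_i'$ and $v_{i+1}''$ meets no other edge, while $v_n'$ and $v_1''$ are isolated, one gets $G^A\cong (n-1)K_2\cup 2K_1$. Part~(iii) is the same computation on the cycle: orienting $C_n$ as $v_1\to v_2\to\cdots\to v_n\to v_1$ gives the edges $v_i'\sim v_{i+1}''$ for $i=1,\ldots,n$ (indices mod $n$), now with no isolated vertices, so $G^A\cong nK_2$. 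The spectra then follow from $\mathrm{Spec}(K_2)=\{1,-1\}$ and $\mathrm{Spec}(K_1)=\{0\}$.

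The heart of the proof is parts~(ii) and~(iv), the alternating orientations, where the key observation is that alternating orientation makes every odd-indexed vertex a source and every even-indexed vertex a sink. Consequently, in $G^A$ the vertices $v_i''$ with $i$ odd (heads that never occur) and $v_i'$ with $i$ even (tails that never occur) are isolated --- exactly $n$ of them --- while the remaining $n$ vertices $v_1',v_2'',v_3',v_4'',\ldots$ are joined consecutively by the arcs $v_{2k-1}\to v_{2k}$ and $v_{2k+1}\to v_{2k}$. I would verify that this chain is precisely a path $P_n$ in case~(ii), giving $G^A\cong P_n\cup nK_1$, and that in case~(iv) it closes up to a cycle $C_n$, giving $G^A\cong C_n\cup nK_1$. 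Substituting the classical spectra $2\cos\frac{\pi k}{n+1}$ $(k=1,\ldots,n)$ of $P_n$ and $2\cos\frac{2\pi k}{n}$ $(k=1,\ldots,n)$ of $C_n$, together with the $n$ zeros contributed by the isolated vertices, yields the stated $\mathcal{I}$-spectra.

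The main obstacle I anticipate is bookkeeping rather than any deep difficulty: one must fix the alternating convention precisely (which forces $n$ to be even in part~(iv) for a consistent orientation of $C_n$), correctly classify each of the $2n$ vertices of $G^A$ as isolated or not, and check that the non-isolated vertices are traversed in the order $v_1',v_2'',v_3',v_4'',\ldots$ with no extra or missing adjacencies, so that the induced subgraph is exactly $P_n$ (resp. $C_n$) and not merely some graph with the same number of edges.
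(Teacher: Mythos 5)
Your proposal is correct and follows essentially the same route as the paper: both reduce each case to identifying the associated graph $G^A$ (as $(n-1)K_2\cup 2K_1$, $P_n\cup nK_1$, $nK_2$, and $C_n\cup nK_1$, respectively) and then invoke $\mathcal{I}(G)=A(G^A)$ together with the classical spectra of $K_2$, $P_n$, and $C_n$. The only difference is that you spell out the source/sink bookkeeping that the paper leaves implicit, which is a welcome but not substantively different elaboration.
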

\begin{proof}
	\begin{enumerate}[(i)]
		\item Here $G^A$ is the union of $(n-1)K_2$, and two isolated vertices. Consequently the spectrum of $A(G^A)$ is $1$, $-1$, and $0$ with multiplicity $n-1$, $n-1$, and $2$, respectively.
		
		\item Here $G^A$ is the union of $P_n$, and $n$ isolated vertices. It is known that the spectrum of $A(P_n)$ is $2\cos\frac{\pi k}{n+1}$ for $k=1,2,\ldots,n$. So, the spectrum of $A(G^A)$ is $0$ with multiplicity $n$, and $2\cos\frac{\pi k}{n+1}$ for $k=1,2,\ldots,n$.
		
		\item Here $G^A=nK_2$ and so the spectrum of $A(G^A)$ is $1$ and $-1$ each with multiplicity $n$.
		
		\item Here $G^A$ is the union of $C_n$ and $n$ isolated vertices. It is known that the spectrum of $A(C_n)$ is $2\cos\frac{2\pi k}{n}$ for $k=1,2,\ldots,n$.	 So, the spectrum of $A(G^A)$ is $0$ with multiplicity $n$, and $2\cos\frac{2\pi k}{n}$ for $k=1,2,\ldots,n$. 
	\end{enumerate}
	Since the spectrum of $\mathcal{I}(G)$ and $A(G^A)$ are the same, we get the result.
	%
\end{proof}
\section{Mixed components of a mixed graph}\label{S6 components}
\begin{defn}\normalfont
	Let $G$ be a mixed graph. A submixed graph $G'$ of $G$ having at least one arc is said to be a \textit{special submixed graph} of $G$, if it satisfies the following conditions.
	\begin{enumerate}[(i)]
		\item Any two distinct vertices of $G'$ are joined by an alternating walk;
		\item If a component $H$ of $G'_u$ has at least one vertex having in-arc in $G'$, and at least one vertex having out-arc in $G'$, then for each vertex $v$ of $H$, there exists a closed alternating walk in $G'$ which starts and ends at $v$, and containing odd number of arcs.
	\end{enumerate}
\end{defn}

\begin{defn}\label{ASM}\normalfont
	Let $G$ be a mixed graph. A submixed graph $G'$ of $G$ is said to be a \textit{mixed component} of $G$ if it satisfies one of the following three conditions.
	\begin{enumerate}[(i)]
		\item $G'$ is a component of $G_u$, and each vertex of $G'$ has out-degree zero in $G$.
		\item $G'$ is a component of $G_u$, and each vertex of $G'$ has in-degree zero in $G$.
		\item $G'$ is a maximal special submixed graph of $G$.
	\end{enumerate}
\end{defn}

We call a mixed component satisfying the conditions (i), (ii) and (iii) mentioned above as a \textit{mixed component of Type~I, Type~II} and \textit{Type~III}, respectively. 

\begin{example}\normalfont\label{exampmixed component}
	Consider the mixed graph $G$ shown in Figure~\ref{ASMfig}. The mixed graphs $G_1,G_2,G_3,$ and $G_4$ shown in the same figure are some special submixed graphs of $G$. The mixed graphs $G_2$, $G_4$, $G_5$ and $G_6$ are the mixed components of $G$. Since $G_1$ and $G_3$ are submixed graphs of $G_2$, they are not maximal special submixed graphs of $G$. So, they are not mixed components of $G$. Furthermore, $G_5$ and $G_6$ are not special submixed graphs of $G$, since they have no arcs.
	\begin{figure}[ht]
		\begin{center}
			\includegraphics[scale=1]{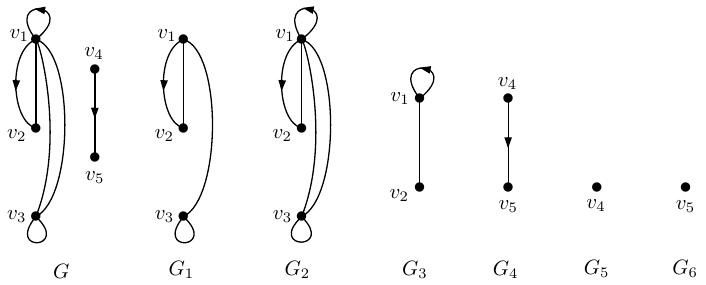}
		\end{center}\caption{A mixed graph $G$ and it's special submixed graphs $G_1$, $G_2$, $G_3$, $G_4$, $G_5$ and $G_6$}\label{ASMfig}
	\end{figure}
\end{example}	

\noindent\textbf{Particular cases}:
The structure of the mixed components of a mixed graph $G$ is detailed below for the cases when $G$ is either a directed graph or a graph.
\begin{enumerate}[(1)]
	\item If $G$ is a directed graph:
	\begin{enumerate}[(i)]
		\item A mixed component of Type~I in $G$ is a vertex $v\in V_G$ whose out-degree in $G$ is zero.
		\item A mixed component of Type~II in $G$ is a vertex $v\in V_G$ whose in-degree in $G$ is zero.
		\item A mixed component of Type~III in $G$ is a non-trivial subdirected graph $G'$ of $G$ which is maximal with respect to the following conditions:
		\begin{enumerate}[(a)]
			\item Any two distinct vertices in $G'$ are connected by an alternating walk in $G'$.
			\item For each vertex $v\in V_{G'}$ that has  both an in-arc and an out-arc in $G'$, there exists a closed alternating walk of odd length in $G'$ which starts and ends at $v$.
		\end{enumerate}
	\end{enumerate}
	\item If $G$ is a graph:
	\begin{itemize}
		\item A subgraph $G'$ of $G$ is a mixed component of $G$ if and only if $G'$ is a component of $G$. 
		\item Note that each component of $G$ is counted twice in the total number of mixed components.  Hence, the number of mixed components of $G$ is  twice the number of components of $G$.
	\end{itemize}
\end{enumerate}	
\begin{observation}\label{observation-ASM}\normalfont
	Let $G$ be a mixed graph. Then the following properties hold:
	\begin{enumerate}[(i)]
		\item  Let $G_1,G_2,\ldots,G_k$ be the components of $G_u$. If $H$ is a mixed component of $G$, then $H_u=\underset{i\in I}{\cup} G_i$, where $I\subseteq\{1,2,\ldots,k\}$. Otherwise, the maximality condition of a special submixed graph  will be violated for Type~III mixed component. For Type~I and Type~II mixed components, it is straightforward.
		\item Each vertex of $G$ belongs to either one or two mixed components of $G$.
		\item A vertex $v$ of $G$ is part of exactly one mixed component if and only if there exists a closed alternating walk in $G$ that starts and ends at $v$ and contains an odd number of arcs.
		\item Each arc of $G$ is contained in exactly one mixed component.
		\item Each edge of $G$ belongs to either one or two mixed components.
		\item An edge $e$ of $G$ is part of exactly one mixed component if and only if there exists a closed alternating walk in $G$ that starts and ends at an endpoint $v$ of $e$ and contains an odd number of arcs.
	\end{enumerate}
\end{observation}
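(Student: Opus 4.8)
The entire argument is organized around the associated graph $G^A$ and the projection $\pi$ sending both copies $v_i'$ and $v_i''$ to $v_i$. Recall that the $A$-copies $v_1',\dots,v_n'$ and the $B$-copies $v_1'',\dots,v_n''$ each span a copy of $G_u$, while an arc $v_i\to v_j$ contributes exactly the cross edge $v_i'\sim v_j''$; hence a walk in $G^A$ crosses between the two sides precisely when the underlying walk of $G$ traverses an arc, and two successive crossings force the corresponding arcs to be traversed in opposite directions. This is the same correspondence between walks in $G^A$ and alternating walks in $G$ that was used to describe the entries of $\mathcal{I}(G)^k=A(G^A)^k$, and the form I shall use repeatedly is its specialization: for a vertex $v_i$, the copies $v_i'$ and $v_i''$ lie in the same component of $G^A$ if and only if $G$ has a closed alternating walk at $v_i$ with an odd number of arcs, since a walk from $v_i'$ to $v_i''$ (opposite primes) projects to exactly such a walk, and conversely.

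The plan is to prove that $C\mapsto\pi(C)$ is a bijection from the connected components of $G^A$ onto the mixed components of $G$, where for a component $C$ I also write $\pi(C)$ for the submixed graph of $G$ with vertex set $\{v_i: v_i'\in C\text{ or }v_i''\in C\}$, carrying an arc for each cross edge of $C$ and an edge for each $A$- or $B$-edge of $C$, and then to read off the six assertions. If $C$ lies entirely among the $A$-copies, then no $v_i'\in C$ can meet a cross edge (its other endpoint would be a $B$-copy forced into $C$), so every vertex of $\pi(C)$ has out-degree zero and $\pi(C)$ is a component of $G_u$, a Type~I component; a $C$ inside the $B$-copies gives a Type~II component symmetrically. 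For a $C$ meeting both sides one must check that $\pi(C)$ is a \emph{special} submixed graph: the defining condition~(i) is the projection of the connectivity of $C$, and for condition~(ii) one uses the parity fact above, namely if a component $K$ of $\pi(C)_u$ has an out-arc vertex $w$ (so $w'\in C$) and an in-arc vertex $p$ (so $p''\in C$), then $w'$ and $p''$, lying in the same $C$, are joined by a walk that projects to an odd-arc alternating walk from $w$ to $p$ inside $\pi(C)$, and flanking this by the edge-paths in $K$ from any $v\in K$ to $w$ and from $p$ back to $v$ produces the required odd closed alternating walk at $v$.

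Granting the bijection, the six parts follow. For part~(i): if a mixed component $H=\pi(C)$ meets a component $G_\ell$ of $G_u$ in a vertex $v$, then $v'\in C$ or $v''\in C$, and since the $A$-copies (respectively $B$-copies) of $G_\ell$ are mutually connected through the $G_u$-edges of $G_\ell$, all of $G_\ell$ is dragged into $C$, so $G_\ell\subseteq H_u$ and $H_u=\bigcup_{i\in I}G_i$; equivalently, the direct maximality argument indicated in the statement adjoins an omitted vertex or edge of $G_\ell$ and verifies that conditions~(i) and~(ii) persist, the delicate point being exactly that the connecting alternating walk has the parity needed to supply the odd closed walks of condition~(ii). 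For part~(ii), each vertex $v_i$ lies in $\pi(C)$ for precisely the component(s) $C$ containing its two copies $v_i'$ and $v_i''$, giving one or two components; it lies in exactly one iff $v_i'$ and $v_i''$ share a component, i.e.\ iff the odd closed alternating walk exists, which is part~(iii). For part~(iv), an arc $v_i\to v_j$ is carried by the single cross edge $v_i'\sim v_j''$, which lies in one component of $G^A$ and in no Type~I or Type~II component, these having no arcs. For part~(v), an edge $v_i\sim v_j$ is carried by the two edges $v_i'\sim v_j'$ and $v_i''\sim v_j''$, lying in one or two components; and they share a component exactly when $v_i'$ and $v_i''$ are connected, i.e.\ when the odd closed alternating walk at the endpoint $v_i$ exists, which is part~(vi).

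The genuine obstacle is the bijectivity, and within it the direction that every maximal special submixed graph (a Type~III mixed component) $M$ equals $\pi(C)$ for a single component $C$. Taking an arc of $M$ and the component $C$ of $G^A$ through its cross edge, one must show $\pi(C)=M$, which reduces to proving that $\pi(C)$ is itself maximal and that two maximal special submixed graphs sharing an arc coincide. The core difficulty is that the alternating condition is fragile under concatenation: two alternating walks glued at a common vertex need not alternate across the join, so one cannot naively merge special submixed graphs. The mechanism that repairs this is precisely the odd closed alternating walks furnished by condition~(ii): they allow the direction of a walk to be reversed at a vertex, and I expect the decisive lemma of the write-up to be that, inside a component $C$ of $G^A$, a vertex $v$ has both copies in $C$ if and only if $v$ carries such an odd closed alternating walk, which is what aligns condition~(ii) with the connectivity of $G^A$ and forces the maximality needed for the bijection.
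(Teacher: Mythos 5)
Your route inverts the paper's logical order, and that matters here. The paper states this Observation \emph{before} Theorem~\ref{ASMthm} and justifies it directly from the definitions: part~(i) by the maximality argument it sketches (adjoin an omitted vertex of a component $G_\ell$ of $G_u$ together with an incident edge; prepending or appending arc-free edges to a walk preserves both alternation and the parity of its arcs, so conditions (i)--(ii) of a special submixed graph persist, contradicting maximality), and parts (ii)--(vi) by definitional case checks. Indeed the paper's proof of Theorem~\ref{ASMthm} itself \emph{uses} part~(i), when in Case~3 it decomposes $H_u$ into components $G_i$ of $G_u$ to build $H'$. You instead derive all six parts from the claim that $C\mapsto\pi(C)$ is a bijection from the components of $G^A$ onto $\mathcal{S}(G)$. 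Your reductions of (i)--(vi) to that bijection are correct (the walk/alternating-walk correspondence, the parity of crossings, and the counting arguments all check out, granted the multiset convention distinguishing Type~I from Type~II copies), and the bijection is true --- it is exactly Theorem~\ref{ASMthm}.

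The genuine gap is that you never prove the bijection. What you actually establish is one half of one direction: a component $C$ confined to one side projects to a Type~I or Type~II mixed component, and a $C$ meeting both sides projects to a \emph{special} submixed graph (your flanking argument for condition~(ii) is sound). Missing are (a) the maximality of $\pi(C)$, so that $\pi(C)$ is in fact a mixed component, and (b) surjectivity, i.e.\ that every Type~III mixed component equals $\pi(C)$ for some $C$. Your closing paragraph explicitly defers both (``I expect the decisive lemma of the write-up to be\ldots'' is a plan, not a proof), and it defers them at exactly the point you yourself identify as the crux: alternation is fragile under concatenation, a lifted alternating walk may land on the wrong copy of its endpoint, and rerouting it requires the odd-closed-walk lemma together with the case analysis that the paper carries out in Claims~1 and~2 and Subclaim~2.1 of Theorem~\ref{ASMthm}. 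Without (a) and (b) the counting statements (ii)--(vi) do not follow: knowing that $v'$ and $v''$ lie in one or two components of $G^A$ controls the sets $\pi(C)$ containing $v$, but says nothing about the mixed components of $G$ unless every $\pi(C)$ is maximal and no mixed component falls outside the image of $\pi$. And quoting Theorem~\ref{ASMthm} instead of proving it is not an option in the paper's order of exposition, since that theorem's proof leans on part~(i) of this very Observation; the direct definitional argument the paper indicates is the way to keep the logic non-circular.
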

Let $\mathcal{S}(G)$ denote the multiset of all mixed components of a mixed graph $G$. If a component of $G$ is a graph, then it can be viewed as a mixed component of Type~I or Type~II. Therefore, we take two copies of this component in $\mathcal{S}(G)$, and designating one as a mixed component of Type~I and the other as a mixed component of Type~II.
\begin{thm}\label{ASMthm}
	If $G$ is a mixed graph, then there exists a one-to-one correspondence between $\mathcal{S}(G)$ and the set of all components of $G^A$.
\end{thm}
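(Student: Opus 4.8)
The plan is to exploit the identity $\mathcal{I}(G)=A(G^A)$ recorded in \eqref{adjmatrix block}, together with the bipartition $V_{G^A}=A\,\overset{.}{\cup}\,B$, where $A=\{v_1',\dots,v_n'\}$ and $B=\{v_1'',\dots,v_n''\}$. Recall from the construction of $G^A$ that every edge inside $A$ or inside $B$ comes from an edge of $G$, whereas every edge between $A$ and $B$ comes from an arc of $G$; moreover an $A$-to-$B$ step traverses an arc forward and a $B$-to-$A$ step traverses an arc backward. Hence in any walk of $G^A$ the level-crossings alternate automatically, so walks of $G^A$ project exactly to the alternating walks (and arc-free walks) of $G$, and the parity of the number of crossings -- recorded by whether the two ends lie in the same part or in opposite parts -- equals the parity of the number of arcs traversed. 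In particular $v_i',v_j'$ (resp. $v_i',v_j''$) lie in one component of $G^A$ iff $v_i,v_j$ are joined in $G$ by an arc-free or even-arc (resp. odd-arc) alternating walk, and $v_i',v_i''$ lie in one component iff $v_i$ carries a closed alternating walk with an odd number of arcs (cf. Observation~\ref{observation-ASM}(iii)). I will define an explicit map $\Phi$ from the components of $G^A$ to $\mathcal{S}(G)$, splitting the argument according to whether a component lies entirely in $A$, entirely in $B$, or meets both parts, and then exhibit its inverse.

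For a component $C$ contained in $A$, no vertex $v_i'\in C$ has a neighbour in $B$, so every such $v_i$ has out-degree $0$ in $G$; since the subgraph of $G^A$ induced on $A$ is isomorphic to $G_u$, the set $\{v_i:v_i'\in C\}$ spans a component of $G_u$ all of whose vertices have out-degree $0$, i.e. a Type~I mixed component, and I set $\Phi(C)$ to be this component. Symmetrically a component contained in $B$ yields a Type~II mixed component. This also reconciles the multiset convention for $\mathcal{S}(G)$: if a component $X$ of $G$ is arc-free, then its two copies $\{v':v\in X\}$ and $\{v'':v\in X\}$ are two distinct components of $G^A$, matching the two copies of $X$ (one of Type~I, one of Type~II) placed in $\mathcal{S}(G)$.

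The essential case is a component $C$ meeting both $A$ and $B$; here I define $\Phi(C)$ to be the submixed graph $H$ with vertex set $\{v_i:v_i'\in C\text{ or }v_i''\in C\}$, with an arc $v_i\to v_j$ exactly when $v_i'\sim v_j''$ is an edge of $C$, and with an edge $v_i\sim v_j$ exactly when $v_i'\sim v_j'$ or $v_i''\sim v_j''$ is an edge of $C$. Since $C$ meets both parts it contains an $A$--$B$ edge, so $H$ has an arc, and I must verify that $H$ is a \emph{maximal special} submixed graph, i.e. a Type~III mixed component. Condition~(i) follows by projecting, for any two vertices of $H$, a path of $C$ joining their copies to an alternating walk lying inside $H$. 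For condition~(ii) I analyse a component $Q$ of $H_u$: by the construction an out-arc at a vertex of $Q$ forces $Q'\subseteq C$ and an in-arc forces $Q''\subseteq C$, so if $Q$ has both an in-arc vertex and an out-arc vertex then $Q'\cup Q''\subseteq C$; consequently every $v\in Q$ has both $v',v''$ in the connected set $C$, which projects to the required closed odd-arc alternating walk at $v$. Finally $H$ is maximal, since a special submixed graph strictly containing $H$ would, through condition~(i), force a copy of one of its additional vertices (or of an endpoint of an additional edge or arc) into the connected set $C$, so it cannot extend beyond the data of $C$.

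It remains to produce the inverse and conclude. For a Type~I (resp. Type~II) mixed component I return its $A$-copy (resp. $B$-copy), which the second paragraph shows to be a component of $G^A$; for a maximal special submixed graph $H^*$ carrying an arc $v\to w$ I return the component $C$ of $G^A$ containing $v'$ (hence also $w''$), and I check $\Phi(C)=H^*$: condition~(i) for $H^*$ forces every vertex, edge and arc of $H^*$ to have its relevant copies inside $C$, so $H^*\subseteq\Phi(C)$, and as $\Phi(C)$ is special, maximality of $H^*$ yields equality; verifying that the two assignments are mutually inverse then finishes the proof. The main obstacle is precisely this Type~III analysis: one must track carefully how the two copies $v',v''$ of a single vertex may fall into different components of $G^A$ -- this is exactly the dichotomy of Observation~\ref{observation-ASM}(ii),(iii) between a vertex lying in one or two mixed components, with out-arcs following $v'$ and in-arcs following $v''$ -- and correctly record the parity of each projected alternating walk so as to land in the intended copy, together with the analogous splitting of edges across one or two components (Observation~\ref{observation-ASM}(v),(vi)), so that the reconstructed edge and arc sets of $\Phi(C)$ are exactly those of the mixed component.
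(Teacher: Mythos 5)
Your strategy mirrors the paper's proof (the paper builds the map from mixed components to components of $G^A$; you build its inverse $\Phi$ by projection, with the same three-way split into Type I/II/III and the same walk-lifting dictionary), and the hardest part of the forward direction is done correctly: your verification of condition (ii) for $\Phi(C)$ via ``an out-arc forces $Q'\subseteq C$ and an in-arc forces $Q''\subseteq C$'' is a clean repackaging of the paper's case analysis, and condition (i) by projection is unproblematic. The genuine gap lies in the two places where you must \emph{lift} alternating walks of a special submixed graph back into $G^A$: the maximality of $\Phi(C)$, and the inclusion $H^*\subseteq\Phi(C)$ in the inverse direction. In both you appeal to condition (i) alone. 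That is not sufficient: the lift of an alternating walk starting at $v$ begins at $v'$ when its first arc is forward but at $v''$ when its first arc is backward, and $v''$ need not lie in $C$, so condition (i) by itself does not ``force the relevant copies inside $C$.''

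Concretely, let $G$ have two vertices $u,v$ and the two opposite arcs $u\to v$ and $v\to u$. This digon $D$ satisfies condition (i) (the arc $u\to v$ is an alternating walk joining $u$ and $v$), yet $G^A$ splits into the two components $\{u',v''\}$ and $\{v',u''\}$. Here $\Phi(\{u',v''\})$ is the mixed component consisting of $u,v$ and the single arc $u\to v$, and $D$ is a submixed graph satisfying condition (i) that strictly contains it while extending beyond the data of $C$ (the arc $v\to u$ lifts to $v'u''\notin C$). So your maximality argument, as stated, cannot rule out this extension; what rules it out is that $D$ fails condition (ii) (it has no closed alternating walk with an odd number of arcs). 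The correct argument---which is exactly what the paper does in Cases ii--v of its Claim 1 and in Subclaim 2.1---is: whenever a lifted walk would have to start at the ``wrong'' copy $v''$, note that its backward first arc gives the component of $v$ in the undirected part an in-arc vertex while $v$ itself carries an out-arc, so condition (ii) of the special submixed graph supplies a closed alternating walk at $v$ with an odd number of arcs, whose lift joins $v'$ to $v''$ inside $C$. You explicitly flag this two-copies/parity issue in your final paragraph as ``the main obstacle,'' but you defer it rather than resolve it; as written, the maximality step and the step ``$H^*\subseteq\Phi(C)$'' do not go through.
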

\begin{proof}
	Let  $V_G=\{v_1,v_2,\ldots,v_n\}$. Let $G_1,G_2,\ldots,G_k$ be the components of $G_u$. Let $G'_1,G'_2,\ldots,G'_k$ (resp. $G''_1,G''_2,\ldots,G''_k$) be the  components of the subgraph spanned by the vertex subset $\{v'_1,v'_2,\ldots,v'_n\}$ (resp. $\{v''_1,v''_2,\ldots,v''_n\}$) of $G^A$. Let $\mathcal{B}$ be the set of all components of $G^A$. We define a mapping $f$ from $\mathcal{S}(G)$ to $\mathcal{B}$ as follows. 
	
	Consider a mixed component $H$ of $G$.
	
	\textit{Case~1.} If $H$ is of Type~I, then $H=G_i$ for some $i=1,2,\ldots,k$. So, $G'_i$ is a component of $G^A$ and we associate $H$ with $G'_i$ under $f$.
	
	\textit{Case~2.} If $H$ is of Type~II, then $H=G_i$ for some $i=1,2,\ldots,k$. So, $G''_i$ is a component of $G^A$ and we associate $H$ with $G''_i$ under $f$.
	
	\textit{Case~3.} If $H$ is of Type~III, then $H$ is associated with a component $H'$ of $G^A$, which is constructed as follows: Let $G_i$ be a component of $H_u$ for some $i\in\{1,2,\ldots,k\}$.
	
	(a) Take $G'_i$, if at least one vertex of $G_i$ has non-zero out-degree in $H$.
	
	(b) Take $G''_i$, if at least one vertex of $G_i$ has non-zero in-degree in $H$.
	
	(c) For each arc $v_j\rightarrow v_k$ in $H$, join $v'_j$ and $v''_k$ by an edge.
	
	Take $H'$ to be the subgraph of $G^A$ obtained by combining one copy of each $G'_i$ and $G''_i$ mentioned above in (a) and (b), respectively along with the set of all edges formed in (c).
	
	\textbf{Claim~1:} $H'$ is a component of $G^A$.
	
	First we show that $H'$ is connected. Consider two distinct vertices $u,v\in V_{H'}$. The following cases arise:
	
	\textit{Case~i.}  $u,v\in V_{G'_r}$ for some $r\in\{1,2,\ldots,k\}$.\\
	Clearly a path exists with in $H'$ joining $u$ and $v$.
	
	\textit{Case~ii.} $u\in V_{G'_r}$ and $v\in V_{G'_l}$ for $r\neq l$.\\
	Here, $u=v'_i$ and $v=v'_j$ for some $i,j\in\{1,2,\ldots,n\}$ and $i\neq j$. We have $v_i,v_j\in V_H$. Since $H$ is a mixed component of $G$ and $v_i\in V_{G_r}$ and $v_j\in V_{G_l}$ for $r\neq l$, there exists an alternating walk joining $v_i$ and $v_j$ in $H$. This implies that at least one of the pair $(v'_i,v'_j)$, $(v'_i,v''_j)$, $(v''_i,v'_j)$ or $(v''_i,v''_j)$ of vertices is joined by a walk in $H'$. This assures that there exists a path joins any one of the above said pairs of vertices of $H'$. If there is a path joining $v'_i$ and $v'_j$ in $H'$, then the case is resolved. If there is a path joining $v'_i$ and $v''_j$ in $H'$, then both $G'_l$ and $G''_l$ are subgraphs of $H'$. This implies that at least one vertex of $G_l$ has non-zero in-degree, and one vertex of $G_l$ has non-zero out-degree in $H$. Since $H$ is a mixed component of $G$, there exists a closed alternating walk in $H$ which stars and ends at $v_j$ containing odd number of arcs. So, corresponding to this walk, we get a walk in $H'$ joining $v'_j$ and $v''_j$. Therefore, there is a path in $H'$ joining $v'_j$ and $v''_j$. Combining together, we get a path joining $v'_i$ and $v'_j$ in $H'$. For the remaining two possibilities also, in similar manner, we can show that there exists a path joining $v'_i$ and $v'_j$ in $H'$.
	
	\textit{Case~iii.}  $u=v''_i$, $v=v''_j$.\\
	As in Cases~i and ii, it can be similarly shown that there exists a path joining $u$ and $v$ in $H'$.
	
	\textit{Case~iv.}  $u\in V_{G'_r}$ and $v\in V_{G''_r}$ for some $r\in\{1,2,\ldots,k\}$.\\
	Here, $u=v'_i$ and $v=v''_j$ for some $i,j\in\{1,2,\ldots,n\}$. This implies that at least one vertex of $G_r$ has non-zero in-degree and one vertex of $G_r$ has non-zero out-degree in $H$. By definition of mixed component, there exists a closed alternating walk in $H$ which starts and ends at each vertex of $G_r$ containing odd number of arcs. In particular, at $v_i$, there is a closed alternating walk in $H$ containing odd number of arcs. This implies that there is a path joining $v'_i$ and $v''_i$ in $H'$. If $i=j$, then the case is resolved. If $i\neq j$, then there is a path joining $v''_i$ and $v''_j$ in $H'$, since $v''_i,v''_j\in V_{G''_r}$. Combining together, there exists a path joining $u=v'_i$ and $v=v''_j$ in $H'$.
	
	\textit{Case~v.} $u\in V_{G'_r}$ and $v\in V_{G''_l}$ for $r\neq l$.\\
	Here, $u=v'_i$ and $v=v''_j$ for some $i,j\in\{1,2,\ldots,n\}$ and $i\neq j$.	Correspondingly, we have $v_i,v_j\in V_H$, and since $H$ is a mixed component of $G$, there exists an alternating walk joining $v_i$ and $v_j$ in $H$. This implies that one of the pair $(v'_i,v'_j)$, $(v'_i,v''_j)$, $(v''_i,v'_j)$ or $(v''_i,v''_j)$ of vertices are joined by a path in $H'$. If there is a path joining $v'_i$ and $v''_j$ in $H'$, then the case is resolved. For the remaining three possibilities, by using a similar method described in Case~ii,  we can show that there exists a path joining $v'_i$ and $v''_j$ in $H'$.
	
	Now, the maximality property of $H$ in $G$ assures the maximality of the connected subgraph $H'$ in $G^A$. Hence, $H'$ is a component of $G^A$. Proof of Claim 1 is completed.
	
	\textbf{Claim~2:} $f$ is bijection.
	
	As mentioned in Cases~1, 2 and 3, different mixed components of $G$ are associated to different components of $G^A$ under $f$. Therefore, the association $f$ is one to one.
	To establish that $f$ is onto, we have to show that for each component $C$ of $G^A$, there exists a mixed component $C^*$ of $G$ such that $f(C^*)=C$.
	
	Let $C$ be a component of $G^A$. As per the construction of $G^A$, one of the following cases must hold: $C=G'_i$ for some $i=1,2,\ldots,k$; $C=G''_i$ for some $i=1,2,\ldots,k$; or $C$ is the union of some $G'_i$, $G''_j$ along with all the edges among them in $G^A$. We analyze each of these cases in detail below.
	
	\textit{Case~a.} If $C=G'_i$ for some $i=1,2,\ldots,k$, then we take $C^*$ as the corresponding component $G_i$ of $G_u$. Then all the vertices of $C^*$ have zero out-degree in $G$, and so it is a mixed component of Type~I. By Case~1, it follows that $f(C^*)=C$.
	
	\textit{Case~b.} If $C=G''_i$ for some $i=1,2,\ldots,k$, then we take $C^*$ as the corresponding component $G_i$ of $G_u$. Since all the vertices of $C^*$ have zero in-degree in $G$ and so it is a mixed component of Type~II. By Case~2, it follows that $f(C^*)=C$.
	
	\textit{Case~c.} If $C$ is the union of some $G'_i$, $G''_j$, and all the edges among them in $G^A$, then the following  procedure is applied:
	
	\begin{itemize}
		\item We take the union of all $G_i$s and $G_j$s corresponding to $G'_i$s and $G''_j$s in $C$, respectively.
		\item For each edge $v'_iv''_j$ in $C$, we join the corresponding vertices $v_i$ and $v_j$ with an arc directed from $v_i$ to $v_j$.
	\end{itemize}
	Let $C^*$ denotes the submixed graph of $G$ obtained from Case~c described earlier. 
	
	\textbf{Subclaim~2.1:} $C^*$, as obtained in Case~c, is a mixed component of $G$. 
	
	To prove this, consider any two vertices $v_i$ and $v_j$ in $C^*$. At least one of $v'_i$ or $v''_i$ and one of $v'_j$ or $v''_j$ must be in $C$. Since $C$ is a component of $G^A$, there is a path joining at least one of the pair $(v'_i,v'_j)$, $(v'_i,v''_j)$, $(v''_i,v'_j)$, or $(v''_i,v''_j)$ of vertices in $C$. From the construction of $C^*$, this implies the corresponding walk or alternating walk joining $v_i$ and $v_j$ in $C^*$.
	
	Now, consider $u,v\in V_{G_i}$  such that $d^+(u)\neq 0$ and $d^-(v)\neq 0$ in $G$. Then $G'_i$ and $G''_i$ are subgraphs of $G^A$. Let $v_j$ be a vertex of $G_i$. Then $v'_j$ and $v''_j$ are in $C$. Since $C$ is connected, there is a path joining $v'_j$ and $v''_j$ in $C$. This implies that there is a closed alternating walk starts and ends at $v_j$ in $C^*$ containing an odd number of arcs.
	
	Finally, since $C$ is a component of $G^A$, it assures the maximality of $C^*$ in $G$. Hence $C^*$ is a mixed component of $G$. This completes the proof of Subclaim~2.1. 
	
	By Case~3, it follows that $f(C^*)=C$, completing the proof of Claim~2, and subsequently the entire proof.
\end{proof}	

For each mixed component $H$ of $G$, we uniquely associate a  component of $G^A$ in this proof. Moving forward we refer to this component as the \textit{associated component of} $H$.

The following result is a direct consequence of Theorem~\ref{ASMthm}.
\begin{cor}\label{ASMscor}
	The number of mixed components of a mixed graph $G$ is equal to the number of components of $G^A$.
\end{cor}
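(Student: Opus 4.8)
The plan is to deduce this immediately from the bijection constructed in Theorem~\ref{ASMthm}. That theorem exhibits an explicit map $f\colon\mathcal{S}(G)\to\mathcal{B}$, where $\mathcal{S}(G)$ is the multiset of all mixed components of $G$ and $\mathcal{B}$ is the set of components of $G^A$, and it proves (in Claim~2 of that proof) that $f$ is a bijection. Since a bijection between two finite collections forces them to have the same cardinality, I would simply record that $|\mathcal{S}(G)|=|\mathcal{B}|$, where the left-hand side is the number of mixed components of $G$ and the right-hand side is the number of components of $G^A$.

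The only point requiring a moment's care is matching the phrase ``number of mixed components'' in the statement with the cardinality $|\mathcal{S}(G)|$. Recall that $\mathcal{S}(G)$ was defined with the convention that a component of $G$ which happens to be a purely undirected graph is counted twice --- once as a mixed component of Type~I and once as a mixed component of Type~II. With this convention in force, the number of mixed components of $G$ is exactly $|\mathcal{S}(G)|$, so the equality $|\mathcal{S}(G)|=|\mathcal{B}|$ is precisely the assertion of the corollary.

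There is no genuine obstacle here: all the substance --- the construction of the associated component $H'$ for each mixed component $H$, the verification in Claim~1 that $H'$ is indeed a component of $G^A$, and the surjectivity argument via Cases~a--c together with Subclaim~2.1 --- was already carried out in the proof of Theorem~\ref{ASMthm}. The corollary is the purely enumerative shadow of that structural bijection, so I expect the write-up to amount to a single sentence invoking Theorem~\ref{ASMthm} and the fact that bijections preserve cardinality.
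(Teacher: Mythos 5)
Your proposal is correct and matches the paper exactly: the paper states the corollary as a direct consequence of Theorem~\ref{ASMthm}, i.e., the bijection $f$ between $\mathcal{S}(G)$ and the set of components of $G^A$ immediately yields equality of cardinalities. Your additional remark about the double-counting convention for purely undirected components is a sensible clarification but does not change the argument.
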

\begin{defn}\normalfont
	A mixed graph $G$ is said to be \textit{$r$-regular} if $d^+(u)=d^-(u)$, and $d(u)+d^+(u)=r$ for all $u\in V_G$.
\end{defn}
It is evident that $G$ is $r$-regular if and only if $r$ is an eigenvalue of $\mathcal{I}(G)$ with corresponding eigenvector $\boldsymbol{1}_{2n}$.
\begin{lemma}\label{regular}
	Let $G$ be a mixed graph. Then $G$ is $r$-regular if and only if $G^A$ is $r$-regular.
\end{lemma}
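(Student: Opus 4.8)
The plan is to prove the equivalence by translating the defining condition of $r$-regularity for $G$ into a row-sum condition on $\mathcal{I}(G)$, and then using the identity $\mathcal{I}(G)=A(G^A)$ to transfer this to a degree condition on $G^A$. The key observation, already noted in the excerpt immediately before the lemma, is that $G$ is $r$-regular precisely when $r$ is an $\mathcal{I}$-eigenvalue with eigenvector $\boldsymbol{1}_{2n}$; equivalently, when every row sum of $\mathcal{I}(G)$ equals $r$. So the entire lemma reduces to showing that the row sums of $\mathcal{I}(G)$ equal $r$ for all rows if and only if every vertex of $G^A$ has degree $r$.

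First I would compute the row sums of $\mathcal{I}(G)$ directly from the block form in \eqref{adjmatrix block}. For a row indexed by $v_i'$, the row sum is the $i$-th row sum of $A(G_u)$ plus the $i$-th row sum of $\vec{A}(G_d)$, which equals $d(v_i)+d^+(v_i)$; for a row indexed by $v_i''$, the row sum is the $i$-th row sum of $\vec{A}(G_d)^T$ plus the $i$-th row sum of $A(G_u)$, which equals $d^-(v_i)+d(v_i)$. Thus the condition ``all row sums of $\mathcal{I}(G)$ equal $r$'' is exactly ``$d(v_i)+d^+(v_i)=r$ and $d(v_i)+d^-(v_i)=r$ for all $i$'', and this pair of conditions is equivalent to $d^+(v_i)=d^-(v_i)$ together with $d(v_i)+d^+(v_i)=r$, i.e.\ to $G$ being $r$-regular in the sense of the definition.

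Next I would recall that since $\mathcal{I}(G)=A(G^A)$, the row sum of $\mathcal{I}(G)$ at any index is exactly the degree in $G^A$ of the corresponding vertex (each loop, if present, contributes $2$ to the diagonal entry and hence is counted correctly by the adjacency-matrix convention fixed in Section~\ref{S2 prelims}). Therefore all row sums of $A(G^A)$ equal $r$ if and only if $\deg_{G^A}(w)=r$ for every vertex $w\in V_{G^A}$, which is precisely the statement that $G^A$ is $r$-regular. Chaining the two equivalences gives the result.

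The only point requiring care, and what I expect to be the main (though minor) obstacle, is bookkeeping with the diagonal and loop entries: one must verify that the doubling conventions in Definition~\ref{adjacency} (a loop at $v_i$ contributing $2r$ to the diagonal, matching the factor of $2$ in the definition of $A(G)$) are consistent with counting $d(v_i)$ as the undirected degree where loops count twice, and that directed loops at $v_i$ contribute correctly to both $d^+(v_i)$ and $d^-(v_i)$ as well as to the off-diagonal $(v_i',v_i'')$ entry. Once these conventions are confirmed to align — which they do by construction — the row-sum computation is immediate and the equivalence follows cleanly.
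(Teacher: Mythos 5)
Your proof is correct and is essentially the paper's own argument: both hinge on the identities $d_{G^A}(v_i')=d(v_i)+d^+(v_i)$ and $d_{G^A}(v_i'')=d(v_i)+d^-(v_i)$, which you obtain as row sums of $\mathcal{I}(G)=A(G^A)$ while the paper reads them off directly from the construction of $G^A$. The row-sum phrasing is only a cosmetic repackaging of the same degree correspondence, so no substantive difference remains.
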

\begin{proof}
	Let $G$ be an $r$-regular mixed graph with $V_G=\{v_1,v_2,\ldots,v_n\}$. Then for $i=1,2,\ldots,n$, we have $d(v_i)+d^+(v_i)=r=d(v_i)+d^-(v_i)$. Observe that for $i=1,2,\ldots,n$, $d(v'_i)=d(v_i)+d^+(v_i)$ and $d(v''_i)=d(v_i)+d^-(v_i)$. Hence $G^A$ is $r$-regular. By retracing the above steps, the converse can also be established.
\end{proof}
\begin{thm}\label{r-regular adjacency}
	If $G$ is an $r$-regular simple mixed graph, then $\bm{\lambda}_1(G)=r$, where multiplicity of $r$ equals the number of mixed components of $G$.
\end{thm}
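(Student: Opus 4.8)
The plan is to transfer the statement entirely to the associated graph $G^A$ and then invoke the classical spectral theory of regular graphs. Since $\mathcal{I}(G)=A(G^A)$, the $\mathcal{I}$-eigenvalues of $G$ are exactly the eigenvalues of $A(G^A)$; in particular $\bm{\lambda}_1(G)=\lambda_1(A(G^A))$. Because $G$ is simple, $G^A$ is a simple graph, and by Lemma~\ref{regular} the hypothesis that $G$ is $r$-regular is equivalent to $G^A$ being $r$-regular. Thus it suffices to prove the following reduced statement: for a simple $r$-regular graph $H$, the largest eigenvalue of $A(H)$ is $r$, and its multiplicity equals the number of connected components of $H$. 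I would then conclude by Corollary~\ref{ASMscor}, which identifies the number of components of $G^A$ with the number of mixed components of $G$.

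For the reduced statement, first I would note that $\mathbf{1}$ is an eigenvector of $A(H)$ for the eigenvalue $r$, since every row of $A(H)$ sums to the common degree $r$; this already shows that $r$ is an eigenvalue (this is also recorded in the remark preceding Lemma~\ref{regular}). To see that no eigenvalue exceeds $r$, I would use that $A(H)$ is a nonnegative symmetric matrix whose maximum row sum equals the maximum degree $r$, so its largest eigenvalue is at most $r$. Combined with the previous observation, this yields $\lambda_1(A(H))=r$, that is, $\bm{\lambda}_1(G)=r$.

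For the multiplicity, I would decompose $H$ into its connected components $H_1,\dots,H_c$. After ordering the vertices component-wise, $A(H)$ is block diagonal with blocks $A(H_1),\dots,A(H_c)$, so the spectrum of $A(H)$ is the multiset union of the spectra of these blocks. Each $H_i$ is again $r$-regular, since regularity is inherited by components, and is connected, so by the Perron--Frobenius theorem the spectral radius $r$ is a \emph{simple} eigenvalue of $A(H_i)$. Hence $r$ occurs with multiplicity exactly $c$ in $A(H)$. Finally, by Corollary~\ref{ASMscor}, $c$ equals the number of mixed components of $G$, which completes the argument.

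The step carrying the real content is the multiplicity count, which rests on the Perron--Frobenius theorem to guarantee that $r$ is a simple eigenvalue on each connected component; the remaining steps are bookkeeping that repackage Lemma~\ref{regular} and Corollary~\ref{ASMscor}. The one point to watch is that the argument uses simplicity, so that $A(H)$ is a genuine $0/1$ adjacency matrix with nonnegative entries and the maximum-row-sum bound and Perron--Frobenius both apply cleanly; this is precisely why the hypothesis ``simple'' appears, and it is guaranteed by the construction of $G^A$ from a simple $G$.
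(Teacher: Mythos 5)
Your proof is correct and follows essentially the same route as the paper: reduce to the associated graph via $\mathcal{I}(G)=A(G^A)$, use Lemma~\ref{regular} to transfer $r$-regularity, and use Corollary~\ref{ASMscor} to convert components of $G^A$ back into mixed components of $G$. The only difference is that where the paper simply cites the classical fact (Cvetkovi\'c et al., Theorem~3.3.1) that an $r$-regular simple graph has largest adjacency eigenvalue $r$ with multiplicity equal to its number of components, you prove that fact inline via the row-sum bound and Perron--Frobenius applied to each connected component.
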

\begin{proof}
	By Lemma~\ref{regular}, $G^A$ is an $r$-regular simple graph. Since $\mathcal{I}(G)$ and $A(G^A)$ have the same spectrum, the result follows in view of Corollary~\ref{ASMscor} and \cite[Theorem~3.3.1]{cvetkovic}: ``If $G$ is an $r$-regular simple graph, then $\lambda_1(G)=r$ whose multiplicity is equal to the number of components of $G$''.
\end{proof}
\begin{defn}\normalfont
	A mixed graph $G$ is said to be \textit{uniconnected} if $G$ has exactly one mixed component.
\end{defn}

\begin{example}\normalfont\label{exampuniconneced}
	The mixed graph $G$ shown in Figure~\ref{uniconnectedexamp} is uniconnected.
	\begin{figure}[ht]
		\begin{center}
			\includegraphics[scale=1]{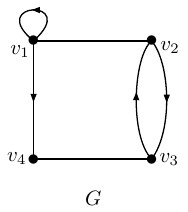}
		\end{center}\caption{An uniconnected mixed graph $G$}\label{uniconnectedexamp}
	\end{figure}
\end{example}
\begin{lemma}\label{uniconnected connected}
	A mixed graph $G$ is uniconnected if and only if $G^A$ is a connected graph.
\end{lemma}
\begin{proof}
	Follows directly from Corollary~\ref{ASMscor}.
\end{proof}
\begin{thm}\label{alternating cycle spectrum}
	Let $G$ be a simple uniconnected mixed graph on $n$ vertices. Then the following holds.
	\begin{enumerate}[(i)]
		\item If $G$ has an alternating path of length $2n-1$ that includes all vertices, all arcs, and each edge of $G$ exactly twice, then the $\mathcal{I}$-spectrum of $G$ is $2\cos\frac{\pi k}{2n+1}$ for $k=1,2,\ldots,2n$.
		\item If $G$ has an alternating cycle of length $2n-1$  that includes all vertices, all arcs, and each edge of $G$ exactly twice, then the $\mathcal{I}$-spectrum of $G$ is $2\cos\frac{\pi k}{2n+1}$ for $k=1,2,\ldots,2n$.
		\item If $G$ has an alternating cycle of length $2n$  that includes all vertices, all arcs, and each edge of $G$ exactly twice, then the $\mathcal{I}$-spectrum of $G$ is $2\cos\frac{\pi k}{n}$ for $k=1,2,\ldots,2n$.
	\end{enumerate}
\end{thm}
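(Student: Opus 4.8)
The plan is to reduce the entire theorem to identifying the associated graph $G^A$ and then reading off its spectrum, exploiting the identity $\mathcal{I}(G)=A(G^A)$, which makes the $\mathcal{I}$-spectrum of $G$ equal to the spectrum of $A(G^A)$. Since $G$ is uniconnected, Lemma~\ref{uniconnected connected} gives that $G^A$ is connected; it has exactly $2n$ vertices, and by the construction of $G^A$ it has $2e(G)+a(G)$ edges. In each case the hypothesis fixes the length of the prescribed alternating walk, which pins down $2e(G)+a(G)$: in (i) and (ii) it equals $2n-1$, so $G^A$ is a connected graph on $2n$ vertices with $2n-1$ edges, i.e.\ a tree; in (iii) it equals $2n$, so $G^A$ is connected with $2n$ vertices and $2n$ edges, i.e.\ unicyclic. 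Because the target spectra are exactly those of $P_{2n}$ and $C_{2n}$, the whole statement reduces to proving $G^A\cong P_{2n}$ in (i),(ii) and $G^A\cong C_{2n}$ in (iii).

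The key step, which I expect to demand the most care, is translating the prescribed alternating walk in $G$ into a walk $W'$ in $G^A$ and controlling its repetitions. Using the correspondence between alternating walks in $G$ and walks in $G^A$ underlying the computation of the entries of $\mathcal{I}(G)^k=A(G^A)^k$, traversing an edge keeps $W'$ inside one copy $\{v'_i\}$ or $\{v''_i\}$, while traversing an arc switches copies. Hence an edge of $G$ occurring twice with an \emph{odd} number of arcs between its occurrences (condition~(iv)) maps to the two \emph{distinct} copies $v'_iv'_j$ and $v''_iv''_j$ in $G^A$, while each arc, occurring exactly once (condition~(v)), maps to a single cross edge $v'_iv''_j$. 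Thus the $2e(G)$ edge-traversals and $a(G)$ arc-traversals map to $2e(G)+a(G)$ distinct edges of $G^A$; since this is the total number of edges of $G^A$, the image $W'$ is a trail using every edge exactly once, i.e.\ an Eulerian trail. Verifying this bookkeeping carefully, in particular that conditions~(iii)--(v) force two different copies rather than the same copy to be reused, is the technical heart of the argument.

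From here the three cases close by elementary structural facts. For (i), the vertex conditions (i)--(ii) force the two occurrences of a repeated vertex onto the distinct copies $v'_i,v''_i$, so $W'$ repeats no vertex; being a walk of length $2n-1$ on $2n$ distinct vertices that uses all $2n-1$ edges, it is a Hamiltonian path exhausting $G^A$, whence $G^A=P_{2n}$. For (ii), $G^A$ is a tree admitting an Eulerian trail; a tree on $2n\ge 2$ vertices has at least two leaves, while an Eulerian trail permits at most two odd-degree vertices, so $G^A$ has exactly two leaves and all internal vertices of degree two, giving $G^A=P_{2n}$. For (iii), the number of arcs is $a(G)=2n-2e(G)$, which is even, so $W'$ returns to its starting copy and is an Eulerian \emph{circuit}; every vertex of $G^A$ then has even degree, and a connected unicyclic graph with all even degrees is precisely $C_{2n}$. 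Invoking the known spectra of $A(P_{2n})$ and $A(C_{2n})$ (as already used in Theorem~\ref{cycle spectrum}) yields $2\cos\frac{\pi k}{2n+1}$ in (i),(ii) and $2\cos\frac{2\pi k}{2n}=2\cos\frac{\pi k}{n}$ in (iii), for $k=1,\dots,2n$, completing the proof.
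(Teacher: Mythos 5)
Your proposal is correct and follows essentially the same route as the paper: both reduce the theorem to showing $G^A = P_{2n}$ in (i),(ii) and $G^A = C_{2n}$ in (iii) via the correspondence between the prescribed alternating walk in $G$ and a walk in $G^A$, and then read off the known spectra of the path and the cycle. The only difference is one of rigor: where the paper simply asserts that the image walk is a path (resp.\ cycle) containing all edges of $G^A$, you justify this with the edge count $2e(G)+a(G)$, the Eulerian-trail observation, and the tree/unicyclic degree arguments, which fills in details the paper's terser proof leaves implicit.
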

\begin{proof}
	Since $G$ is uniconnected, Lemma~\ref{uniconnected connected} implies that $G^A$ is connected.
	
	Under the conditions given in part~(i), $G^A$ has a path containing all its edges, which implies $G^A=P_{2n}$. Therefore, the spectrum of $A(G^A)$ is $2\cos\frac{\pi k}{2n+1}$ for $k=1,2,\ldots,2n$.
	
	Consider the conditions given in part~(ii). Since the length of the alternating cycle is $2n-1$ (an odd number) $G$ contains an odd number of arcs. This implies that $G^A=P_{2n}$ and so the spectrum of $A(G^A)$ is $2\cos\frac{\pi k}{2n+1}$ for $k=1,2,\ldots,2n$.
	
	Under the conditions given in part~(iii), $G^A$ has a cycle containing all its edges, i.e., $G^A=C_{2n}$. Consequently, the spectrum of $A(G^A)$ is $2\cos\frac{\pi k}{n}$ for $k=1,2,\ldots,2n$.
	
	Since the spectrum of $\mathcal{I}(G)$ and $A(G^A)$ are the same, the results follow.
\end{proof}

\begin{thm}\label{simple eigenvalue}
	A simple mixed graph $G$ is uniconnected if and only if $\bm{\lambda}_1(G)$ is a simple eigenvalue with a positive eigenvector.
\end{thm}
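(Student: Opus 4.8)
The plan is to push everything over to the associated graph $G^A$ and then invoke the Perron--Frobenius theory for the adjacency matrix of a simple graph. Recall that since $G$ is simple, $G^A$ is a simple graph, and we have established $\mathcal{I}(G)=A(G^A)$; hence $\bm{\lambda}_1(G)=\lambda_1(A(G^A))$ and the two matrices have the same eigenvectors. Combining this with Lemma~\ref{uniconnected connected}, the theorem is equivalent to the assertion that $G^A$ is connected if and only if $\lambda_1(A(G^A))$ is a simple eigenvalue admitting an everywhere-positive eigenvector. I would record this reduction at the outset so that the remaining work concerns only the nonnegative symmetric matrix $A(G^A)$.

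For the forward direction, assume $G$ is uniconnected, so that $G^A$ is connected. Then $A(G^A)$ is a nonnegative symmetric irreducible matrix, and the Perron--Frobenius theorem for connected graphs (in the standard form available in \cite{cvetkovic}) guarantees that its largest eigenvalue is simple and carries a positive eigenvector. Transporting this back through $\mathcal{I}(G)=A(G^A)$ shows that $\bm{\lambda}_1(G)$ is simple with a positive eigenvector, as required.

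For the converse I would argue contrapositively. Suppose $G^A$ is disconnected with components $C_1,\ldots,C_m$, $m\geq 2$. A suitable reordering of the vertices puts $A(G^A)$ into block-diagonal form $\mathrm{diag}(A_1,\ldots,A_m)$, where each $A_i=A(C_i)$ is the adjacency matrix of a connected simple graph with its own spectral radius $\lambda_1(A_i)$, and set $\lambda_1=\max_i \lambda_1(A_i)=\lambda_1(A(G^A))$. If $\bm{\lambda}_1(G)$ admitted a positive eigenvector $X$, then the restriction of $X$ to each block would be a positive vector; but an eigenvector of a block-diagonal matrix restricts on each block either to zero or to an eigenvector of $A_i$ for $\lambda_1$, so positivity forces $\lambda_1(A_i)=\lambda_1$ for every $i$ (a positive eigenvector of a nonnegative matrix necessarily belongs to the spectral radius). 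The $m$ single-block Perron vectors—positive on one component and zero elsewhere—are then linearly independent eigenvectors for $\lambda_1$, so its multiplicity is at least $m\geq 2$, contradicting simplicity. Hence $G^A$ is connected, i.e.\ $G$ is uniconnected.

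The main obstacle I expect lies in the converse, specifically in justifying cleanly the two facts it rests on: that a positive eigenvector of a nonnegative symmetric matrix must correspond to its largest eigenvalue, and that the block-diagonal structure makes the per-component Perron vectors contribute additively to the multiplicity of $\lambda_1$. Both are consequences of applying Perron--Frobenius component-wise, so the effort is in invoking the theorem at the right level of granularity rather than in any genuine computation.
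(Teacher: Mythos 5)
Your proof is correct and takes essentially the same route as the paper: both reduce the statement to the associated graph via $\mathcal{I}(G)=A(G^A)$ together with Lemma~\ref{uniconnected connected}, at which point the paper simply cites \cite[Corollary~1.3.8]{cvetkovic} (``a simple graph $G$ is connected if and only if $\lambda_1(G)$ is a simple eigenvalue with a positive eigenvector''). The only difference is that you re-derive that cited corollary yourself via Perron--Frobenius (irreducibility for the forward direction, a block-diagonal multiplicity argument for the converse), which is sound but supplies a proof of what the paper treats as a black-box citation.
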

\begin{proof}
	Since $\mathcal{I}(G)=A(G^A)$, the proof follows in view of Lemma~\ref{uniconnected connected} and \cite[Corollary~1.3.8]{cvetkovic}: ``A simple graph $G$ is connected if and only if $\lambda_1(G)$ is a simple eigenvalue with a positive eigenvector''. 
\end{proof}


\section{Determinant}\label{S7 determinant}

For a mixed component $H$ of a mixed graph $G$, we define the following notations:

$\Theta_H:=\{v\in V_H\mid \text{there exits an alternating path in }H\text{ which contains }v\text{ twice}\}$,

$\Omega_H:=\{e\in E_H\mid \text{there exits an alternating path in }H\text{ which contains }e\text{ twice}\}$,

$t(H):=|E_H|+|\Omega_H|+|\vec{E}_H|$.
\begin{lemma}\label{vertexedgecount}
	Let $G$ be a mixed graph, and $H$ a mixed component of $G$. Then the associated component of $H$ in $G^A$ has $|V_H|+|\Theta_H|$ vertices, and $t(H)$ edges.
\end{lemma}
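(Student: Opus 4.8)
The plan is to count the vertices and the edges of the associated component $H'$ of $H$ inside $G^A$ by tracking, for each vertex and each edge of $H$, how many of its two ``copies'' survive in $H'$. Throughout I would use the translation dictionary between $H$ and $G^A$: an undirected edge $a\sim b$ of $H$ produces the two same-side edges $a'\sim b'$ and $a''\sim b''$, whereas an arc $a\to b$ produces the single crossing edge $a'\sim b''$. Consequently, projecting a walk of $G^A$ down to $G$, the arcs traversed are exactly the crossings between the primed block $A_1=\{v'_1,\dots,v'_n\}$ and the doubly primed block $A_2=\{v''_1,\dots,v''_n\}$; these crossings automatically alternate forward/backward, and the number of arcs traversed between two positions equals the number of side-switches between them, so two positions lie on the same side iff an even number of arcs separates them. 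The first fact I would record from this is that a simple path of $G^A$ always projects to an alternating path of $H$: conditions (i)--(v) in the definition of an alternating path translate respectively into ``each vertex of $G^A$ is used at most once'', ``two copies $w',w''$ of a vertex lie on opposite sides so an odd number of arcs separates them'', ``each same-side edge is used at most once'', and ``each crossing edge is used at most once'', all of which hold for a simple path.

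With this dictionary in hand, for the vertex count I would show that each $v\in V_H$ has one or two copies in $H'$, and exactly two precisely when $v\in\Theta_H$. That every $v$ has at least one copy is immediate from the construction of $H'$: the component $G_i$ of $H_u$ containing $v$ contributes $G'_i$ or $G''_i$, and for a Type~III component every such $G_i$ carries an arc by condition~(i) of a special submixed graph (the one-sided Type~I and Type~II cases give the statement directly, with $\Theta_H=\emptyset$ since those $H$ have no arcs). For the equivalence I use that $H'$ is a connected component of $G^A$ (Claim~1 in the proof of Theorem~\ref{ASMthm}): if $v\in\Theta_H$, the witnessing alternating path contains $v$ twice with an odd number of arcs in between, so the two occurrences project to $v'$ and $v''$ on opposite sides, forcing both into the single component $H'$; conversely, if both $v',v''\in H'$, I join them by a simple path of $H'$ and project it to an alternating path of $H$ containing $v$ twice, whence $v\in\Theta_H$. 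Summing $1+[v\in\Theta_H]$ over $V_H$ yields $|V_H|+|\Theta_H|$ vertices.

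The edge count is organized in the same way. Each arc of $H$ contributes exactly one crossing edge of $H'$ by step~(c) of the construction, giving $|\vec{E}_H|$ edges; each undirected edge $e=a\sim b$ lies in a single component $G_i$ of $H_u$ and contributes one copy if only one of $G'_i,G''_i$ lies in $H'$, and both copies $a'\sim b'$ and $a''\sim b''$ when both do. So it remains to prove that both copies lie in $H'$ iff $e\in\Omega_H$. The forward implication mirrors the vertex case: an alternating path containing $e$ twice has, by condition~(iv), an odd number of arcs between the two traversals, so the two occurrences sit on opposite sides, and both $a'\sim b'$ and $a''\sim b''$ appear in the corresponding walk, which lies in $H'$ because it uses at least one crossing edge. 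For the converse I would invoke the elementary fact that in any connected graph every two edges lie on a common simple path (take a shortest path between the vertex sets $\{a',b'\}$ and $\{a'',b''\}$ inside $H'$ and extend it along $e'=a'\sim b'$ and $e''=a''\sim b''$); projecting this simple path back to $H$ yields an alternating path traversing $e$ twice, so $e\in\Omega_H$. Summing $1+[e\in\Omega_H]$ over $E_H$ and adding the $|\vec{E}_H|$ arc-edges gives $|E_H|+|\Omega_H|+|\vec{E}_H|=t(H)$; loops and directed loops are absorbed verbatim, since a loop at $v$ doubles exactly when $v\in\Theta_H$ and a directed loop is an ordinary crossing edge.

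The step I expect to be the main obstacle is the careful verification of the dictionary, and in particular the two ``backward'' directions, where connectivity in $G^A$ must be upgraded to an honest alternating \emph{path} of $H$: one must certify that the projection of a simple path really satisfies all of conditions (i)--(v), and for edges one needs the common-simple-path lemma rather than a bare connectivity argument. Once the parity bookkeeping (arcs traversed $=$ side-switches) is pinned down, the remainder reduces to the two summations above.
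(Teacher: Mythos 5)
Your proof is correct and shares the paper's outer skeleton --- each vertex (edge) of $H$ has one or two copies in the associated component $H'$, and everything reduces to the biconditional ``two copies iff membership in $\Theta_H$ (resp.\ $\Omega_H$)'' --- but you prove that biconditional by a genuinely different mechanism. The paper argues through the mixed-component axioms: if both copies of $v$ lie in $H'$, then by the construction in the proof of Theorem~\ref{ASMthm} the component of $H_u$ containing $v$ has a vertex of positive out-degree and one of positive in-degree, so the Type~III condition supplies a closed alternating walk at $v$ with an odd number of arcs, which the paper then asserts (without proof) can be converted into an alternating path; the reverse implications are dismissed with ``by retracing the above'', and for edges the paper patches things by tacking $e$ onto both ends of a closed alternating path. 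You instead set up an explicit lift/projection dictionary (same-side edges $\leftrightarrow$ edges, crossing edges $\leftrightarrow$ arcs, arcs traversed $=$ side switches), from which both directions of each biconditional are immediate: simple paths of $H'$ project to alternating paths of $H$, alternating paths of $H$ lift into $H'$, and your common-simple-path lemma (two vertex-disjoint edges of a connected graph lie on a common simple path, via the shortest-path construction) replaces the paper's most fragile step. The paper's route buys brevity by reusing Theorem~\ref{ASMthm}; yours buys self-containment and fills exactly the two gaps (the ``retracing'' and the walk-to-path extraction) that the paper leaves open. One caveat, which afflicts the paper's proof equally and is in fact exposed by your closing remark: for a loop $e$ at $v$, the two loop copies lie in $H'$ precisely when $v\in\Theta_H$, yet $e$ can never belong to $\Omega_H$, since a second traversal of a loop forces $v$ to occur at least three times, violating condition~(i) of an alternating path; so for mixed graphs with loops the count $t(H)$ itself is off, and your phrase ``absorbed verbatim'' papers over this just as the published proof does. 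Since the lemma is only ever applied to simple mixed graphs (Theorem~\ref{determinant thm}), this is a defect of the statement's generality rather than of your argument.
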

\begin{proof}
	Let $H'$ denote the associated component of $H$ in $G^A$. First, we determine the number of vertices in $H'$. It is obvious that each vertex of $V_H$ corresponds to at least one vertex and at most two vertices in $H'$. 
	Now, we claim that $v\in \Theta_H$ if and only if $H'$ has two vertices corresponding to $v$.
	Let $v\in V_H$. Suppose $H'$ has two vertices corresponding to $v$. According to the proof of Theorem~\ref{ASMthm}, the component of $H_u$ containing $v$ includes a vertex with non-zero in-degree, and a vertex with non-zero out-degree in $H$. As per the definition of mixed component, there exists an alternating walk in $H$ starting and ending at $v$ and containing odd number of arcs. This implies that there exists an alternating path in $H$ starting and ending at $v$ and containing odd number of arcs. Therefore, we have $v\in \Theta_H$. By retracing the above we can prove the converse. Thus there are $2|\Theta_H|+|V_H\setminus \Theta_H|$ vertices in $H'$. Since $\Theta_H\subseteq V_H$, we have $|V_H\setminus \Theta_H|=|V_H|-|\Theta_H|$. So, the total number of vertices of $H'$ is $|V_H|+|\Theta_H|$.
	
	Next, we determine the number of edges in $H'$. Each arc in $H$ corresponds to exactly one edge in $H'$. Each edge in $H$ corresponds to at least one edge and at most two edges in $H'$.
	Now, we claim that $e\in \Omega_H$ if and only if $H'$ has two edges corresponding to $e$.
	Let $e\in \Omega_H$. Suppose $H'$ has two edges corresponding to $e$. According to the proof of Theorem~\ref{ASMthm}, the component of $H_u$ containing $e$ includes a vertex with non-zero in-degree, and a vertex with non-zero out-degree in $H$. As per the definition of mixed component, $H$ has an alternating walk (in particular, an alternating path) starting and ending at an end vertex of $e$ and containing odd number of arcs. Suppose the above path does not contain the edge $e$ twice, we can extend it by taking $e$ as the first and last edge of the path.
	Therefore, we have $e\in \Omega_H$. By retracing the above we can prove the converse.
	Thus there are $2|\Omega_H|+|E_H\setminus \Omega_H|+|\vec{E}_H|$ edges in $H'$. Since $\Omega_H\subseteq E_H$, we have $|E_H\setminus \Omega_H|=|E_H|-|\Omega_H|$. So, the total number of edges in $H'$ is $t(H)=|E_H|+|\Omega_H|+|\vec{E}_H|$.
\end{proof}
\begin{defn}\normalfont\label{APdefinition}
	Let $G$ be a mixed graph, and let $H$ be a mixed component of $G$.
	We say $H$ satisfies the \textit{associated path property} (shortly \textit{AP property}) if $H$ is either a path, or $H$ has an alternating path which contains 
	(i) each vertex $v$ of $H$ exactly twice if $v\in \Theta_H$; otherwise, exactly once,
	(ii) all the arcs of $H$,  and
	(iii) each edge $e$ of $H$ exactly twice if $e\in \Omega_H$; otherwise, exactly once. 
\end{defn}

\begin{example}\normalfont\label{exampAPproperty}
	Consider the mixed graph $G$ shown in Figure~\ref{AP property}. The mixed graph $H$, also depicted in Figure~\ref{AP property}, is a mixed component of $G$ having the AP property. It includes an alternating path: $v_1,\{v_1,v_2\},v_2,(v_2,v_3),v_3,$ $(v_3,v_3),v_3,(v_3,v_4),v_4,(v_5,v_4),v_{5},(v_{5},v_1),v_1,\{v_1,v_2\},v_2$. Here, $\Theta_H=\{v_1,v_2,v_3\}$ and $\Omega_H=\{\{v_1,v_2\}\}$.
	\begin{figure}[ht]
		\begin{center}
			\includegraphics[scale=1]{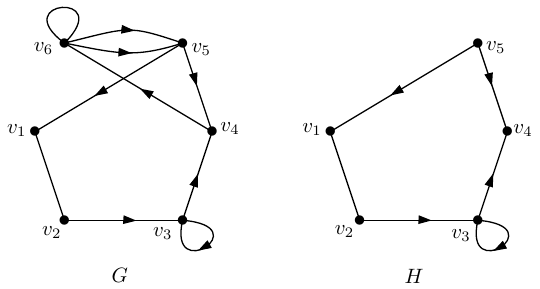}
		\end{center}\caption{The mixed graph $G$ and it's mixed component $H$ having the AP property}\label{AP property}
	\end{figure}
\end{example}

Let $G$ be a mixed graph, and $H$ be a mixed component of $G$ having the AP property. Notice that if $H$ is a path, then it's length is $t(H)$; otherwise, $H$ has an alternating path as described in Definition~\ref{APdefinition} whose length is also $t(H)$.

\begin{lemma}\label{APpath}
	Let $G$ be a mixed graph. A mixed component of $G$ has the AP property if and only if it's associated component in $G^A$ is a path.
\end{lemma}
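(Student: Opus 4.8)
The plan is to exploit the tight numerical match between $H$ and its associated component $H'$ in $G^A$ given by Lemma~\ref{vertexedgecount}, namely that $H'$ has $|V_H|+|\Theta_H|$ vertices and $t(H)$ edges, together with the basic dictionary between alternating walks in $H$ and ordinary walks in $H'$ coming from the construction of $G^A$: an edge $v_i\sim v_j$ of $H$ lifts to the two edges $v'_i\sim v'_j$ and $v''_i\sim v''_j$, one in each copy $A=\{v'_1,\ldots,v'_n\}$, $B=\{v''_1,\ldots,v''_n\}$, while an arc $v_i\to v_j$ lifts to the single ``crossing'' edge $v'_i\sim v''_j$. Thus any alternating walk in $H$ lifts to a walk in $H'$ in which each arc-traversal switches between $A$ and $B$, and consecutive crossings necessarily alternate in direction precisely because the walk is alternating; conversely, any walk in $H'$ projects, via $v',v''\mapsto v$, to an alternating walk in $H$. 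The key parity fact I will use repeatedly is that two occurrences of a repeated vertex (or edge) of an alternating path lie in different copies of $H'$ if and only if the number of arcs between the two occurrences is odd.

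For the forward implication, first dispose of the degenerate case in which $H$ is an undirected path: then $H$ is a Type~I or Type~II mixed component and $H'\cong H$, so $H'$ is a path. Otherwise $H$ carries an alternating path $P$ as in Definition~\ref{APdefinition}, of length $t(H)$. I would lift $P$ to a walk $P'$ in $H'$. By condition~(i) of the AP property every vertex of $H$ occurs in $P$, and by the parity fact each $v\in\Theta_H$, occurring twice with an odd number of arcs between the occurrences, lifts to both copies $v',v''$, while each $v\notin\Theta_H$ lifts to its unique copy; by Lemma~\ref{vertexedgecount} these are exactly all vertices of $H'$. Since $P'$ has $t(H)+1=|V_H|+|\Theta_H|$ vertex-occurrences and meets every one of the $|V_H|+|\Theta_H|$ vertices of $H'$, it meets each exactly once, so $P'$ is a Hamiltonian path. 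As a simple path it traverses $t(H)$ distinct edges, and $H'$ has exactly $t(H)$ edges, so $P'$ uses all of them; a Hamiltonian path using every edge of a graph forces the graph to coincide with that path, whence $H'=P_{|V_H|+|\Theta_H|}$.

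For the converse, suppose $H'$ is a path. If $H$ has no arcs then $H$ is a Type~I/II component with $H\cong H'$, so $H$ is a path and the AP property holds by its first clause. In general I would take the natural traversal of $H'$, its unique Hamiltonian path, which uses every vertex and every edge exactly once, and project it to an alternating walk $P$ in $H$. It then remains to check that $P$ is an alternating path obeying (i)--(iii): the arc conditions follow because each arc of $H$ corresponds to exactly one crossing edge of $H'$, each used once; the vertex and edge multiplicities follow from the two characterizations in Lemma~\ref{vertexedgecount} ($v\in\Theta_H$, resp. $e\in\Omega_H$, iff $H'$ carries two copies of it); and the odd-arc-count requirements, that is, the alternating-path axioms on repeated vertices and repeated edges, follow from the parity fact, since the two copies of any repeated vertex or edge lie one in $A$ and one in $B$, forcing an odd number of crossings between them.

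The main obstacle I anticipate is the bookkeeping in the parity fact, precisely matching ``two occurrences in $P$ separated by an odd number of arcs'' in $H$ with ``two distinct copies $v',v''$'' in $H'$ in both the vertex and the edge versions, and confirming that the lift $P'$ of an alternating path is a \emph{simple} path rather than merely a walk. Once that correspondence is pinned down, the counting identity $t(H)=|V_H|+|\Theta_H|-1$ forced by Lemma~\ref{vertexedgecount} does the rest, converting ``Hamiltonian path using all edges'' into the conclusion that $H'$ is a path.
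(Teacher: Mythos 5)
Your proof is correct and takes essentially the same route as the paper's: lift the alternating path guaranteed by the AP property to the associated component and use the vertex/edge counts of Lemma~\ref{vertexedgecount} to conclude it is a spanning path exhausting all edges, and conversely project the path $H'$ back down and verify the AP conditions. The only difference is that you make explicit two points the paper leaves implicit (the parity argument showing repeated occurrences lift to distinct copies, and the occurrence-count argument that the lift is a simple path), which is a tightening of the same argument rather than a different approach.
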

\begin{proof}
	Let $H$ be a mixed component of $G$, and let $H'$ be the associated component of $H$ in $G^A$. Suppose $H$ has the AP property. If $H$ is a path, then $H'$ is a path. Suppose $H$ has an alternating path $P$ as described in Definition~\ref{APdefinition}. Then $P$ contains: the vertices in $\Theta_H$ exactly twice, the vertices in $V_H\setminus \Theta_H$ exactly once, the arcs in $\vec{E}_H$ exactly once, the edges in $\Omega_H$ exactly twice, and the edges in $E_H\setminus \Omega_H$ exactly once. Let $P'$ be the corresponding path of $P$ in $H'$. Then $P'$ contains exactly $2|\Theta_H|+|V_H\setminus \Theta_H|=|V_H|+|\Theta_H|$ vertices, and $2|\Omega_H|+|E_H\setminus \Omega_H|+|\vec{E}_H|=t(H)$ edges of $H'$. According to Lemma~\ref{vertexedgecount}, $P'$ contains all the vertices and edges of $H'$, proving that $H'$ is a path. 
	
	Conversely, suppose that $H'$ is a path.  Lemma~\ref{vertexedgecount} ensures $H'$ contains $|V_H|+|\Theta_H|$ vertices, and $t(H)$ edges.
	Let $P$ be the sequence of vertices, edges, and arcs in $H$ corresponding to the path $H'$. Then $P$ may be a path or an alternating path in $H$. Suppose $P$ is a path. Then $\vec{E}_H=\emptyset$ and $|\Omega_H|=\emptyset$, and so $H$ is a path of length $|E_H|$. Suppose $P$ is an alternating path. Then $P$ contains $|V_H|+|\Theta_H|$ vertices, $|E_H|+|\Omega_H|$ edges, and $|\vec{E}_H|$ arcs. As per the definition of alternating path, the above is possible only when $H$ has the AP property. 
\end{proof}	

\begin{defn}\normalfont\label{ACdefinition}
	Let $G$ be a mixed graph, and let $H$ be a mixed component of $G$. 
	We say $H$ satisfies the \textit{associated cycle property} (shortly AC property) if $H$ is either a cycle, or $H$ has an alternating cycle having an even number of arcs with starting vertex $u$ which contains 
	(i) $u$ thrice if $u\in \Theta_H$; otherwise, twice,
	(ii) each vertex $v\neq u$ of $H$ exactly twice if $v\in \Theta_H$; otherwise, exactly once, 
	(iii) all the arcs of $H$,  and
	(iv) each edge $e$ of $H$ exactly twice if $e\in \Omega_H$; otherwise, exactly once. 
\end{defn}
\begin{example}\normalfont\label{exampACproperty}
	Consider the mixed graph $G$ shown in Figure~\ref{AC property}. The mixed graph $H$, also depicted in Figure~\ref{AC property}, is a mixed component of $G$ having the AC property. It includes an alternating cycle:  $v_1,\{v_1,v_2\},v_2,(v_2,v_3),v_3,$ $(v_3,v_3),v_3,(v_3,v_4),v_4,(v_5,v_4),v_{5},(v_{5},v_1),v_1,\{v_1,v_2\},$ $v_2,(v_1,v_2),v_1$ of length $8$. Here, $\Theta_H=\{v_1,v_2,v_3\}$ and $\Omega_H=\{\{v_1,v_2\}\}$.
	\begin{figure}[ht]
		\begin{center}
			\includegraphics[scale=1]{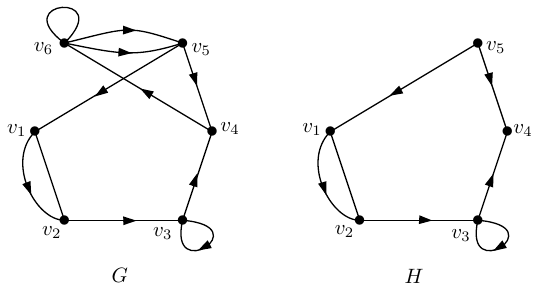}
		\end{center}\caption{The mixed graph $G$ and it's mixed component $H$ having the AC property}\label{AC property}
	\end{figure}
\end{example}
Let $G$ be a mixed graph, and $H$ be a mixed component of $G$ having the AC property. Notice that if $H$ is a cycle, then it's length is $t(H)$; otherwise, $H$ has an alternating cycle as described in Definition~\ref{ACdefinition} whose length is also $t(H)$.
\begin{lemma}\label{ACcycle}
	Let $G$ be a mixed graph. A mixed component of $G$ has the AC property if and only if it's associated component in $G^A$ is a cycle.
\end{lemma}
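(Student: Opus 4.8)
The plan is to mirror the proof of Lemma~\ref{APpath}, replacing ``path'' by ``cycle'' throughout and using Lemma~\ref{vertexedgecount} to control the number of vertices and edges in the associated component $H'$ of $H$ in $G^A$. Recall that $H'$ has $|V_H|+|\Theta_H|$ vertices and $t(H)=|E_H|+|\Omega_H|+|\vec{E}_H|$ edges. Since $H'$ is a component of $G^A$ it is connected, so to prove $H'$ is a cycle it suffices to exhibit a closed walk in $H'$ that traverses every vertex exactly once and every edge exactly once; a connected graph admitting such a spanning closed walk is forced to be a cycle on its vertex set.

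First I would prove the forward implication. Suppose $H$ has the AC property. If $H$ is a cycle then $\vec{E}_H=\emptyset$, so $H$ is a Type~I (equivalently Type~II) mixed component and $H'$ is a copy $G'_i$ (resp. $G''_i$) of $H$, hence a cycle. Otherwise $H$ carries the alternating cycle $C$ of Definition~\ref{ACdefinition}, whose length is $t(H)$. I would push $C$ forward to the corresponding closed walk $C'$ in $H'$ exactly as in the proof of Theorem~\ref{ASMthm}: the parity of the number of arcs traversed so far records whether the current vertex sits on its primed or its double-primed copy. By the edge-count half of Lemma~\ref{vertexedgecount}, the two traversals of an edge $e\in\Omega_H$ correspond to the two distinct copies of $e$ in $H'$, while an edge outside $\Omega_H$ and each arc correspond to a single edge; hence $C'$ uses all $t(H)$ edges of $H'$ once each. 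Similarly the vertex-count half shows $C'$ meets each of the $|V_H|+|\Theta_H|$ vertices of $H'$ exactly once, the starting vertex $u\in\Theta_H$ appearing three times in $C$ precisely because both copies $u',u''$ must be visited while $u'$ serves as the common start and end. A closed walk meeting every vertex once and every edge once forces $H'=C_{|V_H|+|\Theta_H|}$.

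For the converse, suppose $H'$ is a cycle; by Lemma~\ref{vertexedgecount} it has $|V_H|+|\Theta_H|$ vertices and $t(H)$ edges. Reading the unique cyclic traversal of $H'$ back into $H$ yields a sequence $C$ that is either a cycle (when no edge of $H'$ joins the primed part to the double-primed part, i.e. $\vec{E}_H=\emptyset$, whence $H$ is a cycle) or an alternating cycle. In the latter case I would recover each clause of Definition~\ref{ACdefinition} from the multiplicities: a vertex with two copies in $H'$ lies in $\Theta_H$ and is met twice (thrice for the start), a vertex with one copy is met once, an edge with two copies lies in $\Omega_H$ and is traversed twice, and each arc, arising from a single cross-edge of $H'$, is traversed once. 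The one genuinely new point compared with Lemma~\ref{APpath} is the \emph{parity} of the arc count: since $C'$ is a closed walk returning to its starting copy, it crosses between the primed and double-primed parts an even number of times, and each crossing is exactly an arc of $H$; therefore $C$ contains an even number of arcs, as the AC property requires.

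The main obstacle I anticipate is not a hard estimate but the bookkeeping that makes the correspondence $C\leftrightarrow C'$ precise: one must argue that the two visits of a $\Theta_H$-vertex (resp. the two traversals of an $\Omega_H$-edge) really land on the two different copies in $H'$ rather than repeating one copy, and that the start vertex is counted correctly. This is exactly the parity/copy assignment already validated in Theorem~\ref{ASMthm} and Lemma~\ref{vertexedgecount}, so I would lean on those results rather than redo the analysis, and the evenness-of-arcs parity argument above is what closes the gap between ``cycle'' and ``even alternating cycle''.
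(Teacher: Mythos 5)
Your proposal is correct and takes essentially the same approach as the paper's proof: both directions rest on the vertex and edge counts of Lemma~\ref{vertexedgecount} and on the correspondence between the alternating cycle $C$ in the mixed component $H$ and the closed walk $C'$ in its associated component of $G^A$, with the converse splitting into the cycle case ($\vec{E}_H=\emptyset$) and the alternating-cycle case exactly as the paper does. Your explicit parity argument for why the pulled-back sequence must contain an \emph{even} number of arcs (a closed walk crosses between the primed and double-primed parts an even number of times) is a small strengthening of the paper's proof, which asserts this fact without justification.
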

\begin{proof}
	Let $H$ be a mixed component of $G$, and let $H'$ be the associated component of $H$ in $G^A$. Assume that $H$ has the AC property. If $H$ is a cycle, then $H'$ is a cycle. Suppose $H$ has an alternating cycle $C$ as described in Definition~\ref{ACdefinition}. Then $C$ contains the vertices in $\Theta_H$ exactly twice (excluding the repeat of the starting vertex in the end), the vertices in $V_H\setminus \Theta_H$ exactly once (excluding the repeat of the starting vertex in the end), the arcs in $\vec{E}_H$ exactly once, the edges in $\Omega_H$ exactly twice, and the edges in $E_H\setminus \Omega_H$ exactly once. Let $C'$ be the associated cycle of $C$ in $H'$. Then $C'$ contains exactly $2|\Theta_H|+|V_H\setminus \Theta_H|=|V_H|+|\Theta_H|$ vertices (excluding the repeat of the starting vertex in the end) and $2|\Omega_H|+|E_H\setminus \Omega_H|+|\vec{E}_H|=t(H)$ edges of $H'$. In view of Lemma~\ref{vertexedgecount}, we conclude that $C'$ contains all the vertices and edges of $H'$, thus making $H'$ a cycle. 
	
	Conversely, we assume that $H'$ is a cycle. According to Lemma~\ref{vertexedgecount}, $H'$ contains $|V_H|+|\Theta_H|$ vertices and $t(H)$ edges.
	Let $C$ be the sequence of vertices, edges and arcs in $H$ corresponding to the cycle $H'$. Then $C$ could be either a cycle  or an alternating cycle having even number of arcs in $H$. If $C$ is a cycle, then $\vec{E}_H=\emptyset$ and $|\Omega_H|=\emptyset$, which means $H$ must be a cycle of length $|E_H|$. If $C$ is an alternating cycle, then it contains $|V_H|+|\Theta_H|$ vertices (excluding the repeat of the starting vertex in the end), $|E_H|+|\Omega_H|$ edges, and $|\vec{E}_H|$ arcs. As per the definition of alternating cycle, this is possible only when $H$ has the AC property. 
\end{proof}

A subgraph $H$ of a graph $G$ is called an \textit{elementary subgraph} if each component of $H$ is either an edge or a cycle. Denote by $c(H)$ and $c_1(H)$, the number of components in $H$ which are cycles and edges, respectively.

\begin{thm}\label{determinant thm}
	Let $G$ be a mixed graph in which each mixed component  either possesses the AP property or the AC property. Suppose $G$ has $q$ mixed components, $F_1,F_2,\ldots,F_q$, each with the AP property. Additionally $G$ has $l$ mixed components, $H_1,H_2,\ldots,H_l$, each possessing the AC property, such that $t(H_i)$ is even for $i=1,2,\ldots,l$. Then 
	\begin{align}\label{determinant formula}
		\det \mathcal{I}(G)=\begin{cases}
			2^{p(G)}(-1)^{2n-p(G)-\underset{i=1}{\overset{q}{\sum}}\frac{t(F_i)+1}{2}}\times \underset{S\subseteq N_l}{\sum}(-1)^{|S|-\underset{i\in S}{\sum}\frac{t(H_i)}{2}}, & \textnormal{if }t(F_i)~\textnormal{is odd for }i=1,2,\ldots,q;\\
			0, & \textnormal{otherwise}.
		\end{cases}
	\end{align}
	where 
	(i) $p(G)$ denotes the number of mixed components of $G$ that have the AC property;
	(ii) $N_l=\{1,2,\ldots,l\}$.
\end{thm}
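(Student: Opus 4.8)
The plan is to reduce everything to the adjacency matrix of the associated graph $G^A$ and then exploit the fact that, under the hypotheses, $G^A$ is a vertex-disjoint union of paths and cycles. First I would invoke the identity $\mathcal{I}(G)=A(G^A)$, so that $\det\mathcal{I}(G)=\det A(G^A)$. By Corollary~\ref{ASMscor} the components of $G^A$ are exactly the associated components of the mixed components of $G$; by Lemma~\ref{APpath} each mixed component $F_i$ with the AP property yields a path component, and by Lemma~\ref{ACcycle} each mixed component $H$ with the AC property yields a cycle component. Hence $G^A$ is a disjoint union of $q$ paths and $p(G)$ cycles, and since $A(G^A)$ is block diagonal with one block per component, $\det A(G^A)$ is the product of the determinants of these blocks.

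Next I would evaluate each block. By Lemma~\ref{vertexedgecount}, the path associated to $F_i$ has $t(F_i)$ edges, hence $t(F_i)+1$ vertices, i.e.\ it is $P_{t(F_i)+1}$; the cycle associated to $H_i$ has $t(H_i)$ vertices, i.e.\ it is $C_{t(H_i)}$. Using the standard evaluations (e.g.\ the Sachs coefficient theorem, counting spanning elementary subgraphs), $\det A(P_m)=(-1)^{m/2}$ if $m$ is even and $0$ if $m$ is odd, while $\det A(C_m)=2$ if $m$ is odd and $-2+2(-1)^{m/2}$ if $m$ is even. In particular, if some $t(F_i)$ is even then $P_{t(F_i)+1}$ has an odd number of vertices, its determinant is $0$, and the whole product vanishes; this gives the ``otherwise'' case of \eqref{determinant formula}. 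I would therefore assume every $t(F_i)$ is odd, so that each path contributes $(-1)^{(t(F_i)+1)/2}$ and the path product is $(-1)^{\sum_{i=1}^{q}(t(F_i)+1)/2}$.

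For the cyclic blocks I would split the $p(G)$ cycles into the $p(G)-l$ with odd $t$, each contributing the factor $2$, and the $l$ with even $t$ (namely $H_1,\dots,H_l$), each contributing $2\bigl((-1)^{t(H_i)/2}-1\bigr)$. The odd-cycle factors give $2^{p(G)-l}$, and pulling a factor $2$ out of each even-cycle term gives $2^{l}\prod_{i=1}^{l}\bigl((-1)^{t(H_i)/2}-1\bigr)$; together these produce the prefactor $2^{p(G)}$. Expanding via $\prod_{i=1}^{l}(a_i-1)=\sum_{S\subseteq N_l}\bigl(\prod_{i\in S}a_i\bigr)(-1)^{l-|S|}$ with $a_i=(-1)^{t(H_i)/2}$, and using $(-1)^{x}=(-1)^{-x}$, I would rewrite $\prod_{i=1}^{l}\bigl((-1)^{t(H_i)/2}-1\bigr)=(-1)^{l}\sum_{S\subseteq N_l}(-1)^{|S|-\sum_{i\in S}t(H_i)/2}$, which supplies exactly the subset sum in \eqref{determinant formula}.

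Assembling the path product, the power $2^{p(G)}$, and the factor $(-1)^{l}$ from the cycle expansion, it remains to reconcile the sign with the claimed exponent $2n-p(G)-\sum_{i=1}^{q}(t(F_i)+1)/2$; this sign bookkeeping is the main and only delicate obstacle. I expect to settle it by a global parity count: $G^A$ has exactly $2n$ vertices, so summing vertex counts over components gives $2n=\sum_{i=1}^{q}(t(F_i)+1)+\sum_{\text{cycles}}t(H_j)$. Reducing mod $2$ and using that the $p(G)-l$ odd cycles each contribute an odd number yields the congruence $\sum_{i=1}^{q}(t(F_i)+1)+l+p(G)\equiv 0 \pmod 2$. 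This is precisely the relation needed to convert $(-1)^{\sum(t(F_i)+1)/2}(-1)^{l}$ into $(-1)^{2n-p(G)-\sum(t(F_i)+1)/2}$, completing the first case of \eqref{determinant formula}.
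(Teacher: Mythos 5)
Your proposal is correct, and I could verify every step, but it organizes the determinant computation differently from the paper after the shared opening. Both arguments reduce to $\det A(G^A)$ and use Lemma~\ref{APpath}, Lemma~\ref{ACcycle} and Lemma~\ref{vertexedgecount} to see that $G^A$ is a disjoint union of $q$ paths $P_{t(F_i)+1}$ and $p(G)$ cycles, exactly $l$ of which are even. From there the paper makes a single global application of the spanning-elementary-subgraph formula $\det A(G)=\sum(-1)^{n-c_1(H)-c(H)}2^{c(H)}$: each path is forced into its unique perfect matching (which exists iff $t(F_i)$ is odd), each odd cycle is forced to appear as itself, and the only freedom is the choice, for each even cycle, between the cycle and one of its two perfect matchings; this choice produces the subset sum over $S\subseteq N_l$ with weight $2^{|S|}$ directly in the stated shape. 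You instead exploit block-diagonality, writing $\det A(G^A)$ as the product of per-component determinants evaluated in closed form ($\det A(P_m)=(-1)^{m/2}$ for even $m$ and $0$ for odd $m$; $\det A(C_m)=2$ for odd $m$ and $-2+2(-1)^{m/2}$ for even $m$ --- all correct), and then you recover the subset sum by expanding $\prod_{i=1}^{l}\bigl((-1)^{t(H_i)/2}-1\bigr)$. The price of your route is the final sign reconciliation, and you settle it correctly: counting vertices gives $2n=\sum_{i=1}^{q}\bigl(t(F_i)+1\bigr)+\sum_{\text{cycles}}t(\cdot)$, and since the $p(G)-l$ odd cycles contribute odd lengths, this yields $\sum_{i=1}^{q}\bigl(t(F_i)+1\bigr)+p(G)+l\equiv 0\pmod 2$, which is exactly the congruence needed to turn your sign $(-1)^{\sum_i(t(F_i)+1)/2+l}$ into the paper's $(-1)^{2n-p(G)-\sum_i(t(F_i)+1)/2}$. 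In short: your version is more modular, with the zero case and the per-block values immediate from standard facts, while the paper's single enumeration delivers the formula in its published form without any product-to-sum expansion or parity bookkeeping.
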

\begin{proof}
	Since $\mathcal{I}(G)=A(G^A)$, we have $\det \mathcal{I}(G)=\det A(G^A)$. In view of \cite[Theorem~3.8]{bapatbook}: ``Let $G$ be a simple graph. Then 
	\begin{equation}\label{determinant}
		\det A(G)=\sum(-1)^{n-c_1(H)-c(H)}2^{c(H)},
	\end{equation} 
	where the summation is over all spanning elementary subgraphs $H$ of $G$'', to find $\det A(G^A)$, first we have to determine all the spanning elementary subgraphs of $G^A$. 
	
	Let $\bm F_i$ denotes the associated component of $F_i$ in $G^A$ for $i=1,2,\ldots,q$. According to Lemma~\ref{APpath}, these associated components are paths. From Lemma~\ref{vertexedgecount}, the length of $\bm F_i$ is $t(F_i)$ for $i=1,2,\ldots,q$. If at least one $t(F_i)$ is even for $i=1,2,\ldots,q$, then $G^A$ has no spanning elementary subgraph. By \eqref{determinant}, this implies that $\det A(G^A)=0$. If each $t(F_i)$ is odd for $i=1,2,\ldots,q$, then each spanning elementary subgraph of $G^A$ has exactly $\frac{t(F_i)+1}{2}$ edges from $\bm F_i$. This implies that each elementary spanning subgraph of $G^A$ has totally $\sum_{i=1}^{q}\frac{t(F_i)+1}{2}$ edges from $\underset{i=1}{\overset{q}{\cup}}\bm F_i$.
	
	According to Lemma~\ref{ACcycle}, the number of mixed components of $G$ having the AC property is equal to the number of components of $G^A$ that are cycles. Therefore, $G^A$ has $p(G)$ components that are cycles. Let $\bm H_i$ denotes the associated component of $H_i$ in $G^A$ for $i=1,2,\ldots,l$. From Lemma~\ref{vertexedgecount}, it follows that $\bm H_i$ is an even cycle of length $t(H_i)$, while the remaining $p(G)-l$ cycles in $G^A$ are of odd length. All the odd cycles of $G^A$ appear unchanged in each spanning elementary subgraph of $G^A$.
	
	At this point, we have established that the spanning elementary subgraphs of $G^A$ have $\sum_{i=1}^{q}\frac{t(F_i)+1}{2}$ edges and $p(G)-l$ cycles in common. The variation among these spanning elementary subgraphs depends on the components $\bm H_1,\bm H_2,\ldots,\bm H_l$ of $G^A$.
	
	For each $i=1,2,\ldots,l$, since $\bm H_i$ is a cycle of even length, it can either appear as it is or as two different sets of $\frac{t(H_i)}{2}$ independent edges in a spanning elementary subgraph of $G^A$. 
	
	Thus, for each $S\subseteq N_l$, there are $2^{|S|}$ spanning elementary subgraphs of $G^A$, each containing $\sum_{i=1}^{q}\frac{t(F_i)+1}{2}+\underset{i\in S}{\sum}\frac{t(H_i)}{2}$ edges and $p(G)-|S|$ cycles. This leads to the following contribution for each subset $S\subseteq N_l$ in the summation given in \eqref{determinant}
	$$2^{|S|}\times(-1)^{2n-p(G)+|S|-\sum_{i=1}^{q}\frac{t(F_i)+1}{2}-\underset{i\in S}{\sum}\frac{t(H_i)}{2}}2^{p(G)-|S|}.$$ Hence the result follows.
\end{proof}
\begin{example}\normalfont
	Consider the mixed graph $G$ shown in  Figure~\ref{determinantexample}. The mixed graphs $H_1,H_2,G_1,$ $G_2,F_1,F_2$, and $F_3$ shown in  Figure~\ref{determinantexample}, are the mixed components of $G$. Among these $F_1$, $F_2$ and $F_3$ have the AP property, while $H_1$, $H_2$, $G_1$, and $G_2$ have the AC property. Here $n=12$, $p(G)=4$, $q=3$, $l=2$, $t(F_1)=3$, $t(F_2)=1$, $t(F_3)=1$, $t(H_1)=6$ and $t(H_2)=4$. Thus from \eqref{determinant formula}, $\det \mathcal{I}(G)=2^4(-1)^{2\times10-(\frac{3+1}{2}+\frac{1+1}{2}+\frac{1+1}{2})}\times((-1)^0+(-1)^{1-\frac{6}{2}}+(-1)^{1-\frac{4}{2}}+(-1)^{2-\frac{6}{2}-\frac{4}{2}})=2^4(1+1-1-1)=0$. This fact is also evident since $\mathcal{I}(G)$ has two identical columns.
	\begin{figure}[ht]
		\begin{center}
			\includegraphics[scale=1]{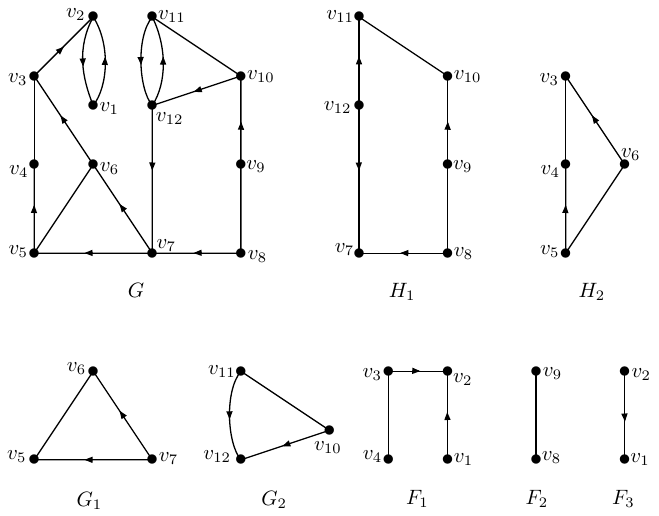}
		\end{center}\caption{The mixed graph $G$; the mixed components $H_1$, $H_2$, $G_1$, and $G_2$ of $G$ having the AC property; the mixed components $F_1$, $F_2$, and $F_3$ of $G$ having the AP property.}\label{determinantexample}
	\end{figure}
\end{example}

\section{Bounds}\label{S8 bounds}
Let $G$ be a mixed graph. We define the following notations:

$\Delta_1(G):=\max \{d(u)+d^+(u)\mid u\in V_G\}$,

$\Delta_2(G):=\max \{d(u)+d^-(u)\mid u\in V_G\}$,

$\delta_1(G):=\min \{d(u)+d^+(u)\mid u\in V_G\}$,

$\delta_2(G):=\min \{d(u)+d^-(u)\mid u\in V_G\}$. 
\begin{thm}\label{thm-bound1}
	If $G$ is a simple mixed graph, then $|\lambda|\leq \max\{\Delta_1(G),\Delta_2(G)\}$ for each eigenvalue $\lambda$ of $\mathcal{I}(G)$.
\end{thm}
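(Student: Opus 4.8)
The plan is to exploit the identity $\mathcal{I}(G)=A(G^A)$ together with the observation that the row sums of $\mathcal{I}(G)$ are precisely the quantities defining $\Delta_1(G)$ and $\Delta_2(G)$. First I would compute these row sums directly from the block form~\eqref{adjmatrix block}. For a vertex $v_i$, the row indexed by $v'_i$ receives a contribution from the upper-left block $A(G_u)$, whose $i$-th row sum equals the undirected degree $d(v_i)$ (each loop contributing $2$ through the diagonal convention $2r$), and a contribution from the block $\vec{A}(G_d)$, whose $i$-th row sum equals the out-degree $d^+(v_i)$. Hence the $v'_i$ row sum of $\mathcal{I}(G)$ equals $d(v_i)+d^+(v_i)$. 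Symmetrically, the $v''_i$ row receives $d(v_i)$ from the lower-right block $A(G_u)$ and $d^-(v_i)$ from $\vec{A}(G_d)^T$, so its row sum equals $d(v_i)+d^-(v_i)$.

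Consequently the maximum over all $2n$ row sums is exactly $\max\{\Delta_1(G),\Delta_2(G)\}$. Since every entry of $\mathcal{I}(G)$ is a nonnegative integer, this maximum row sum coincides with the maximum absolute row sum, i.e. the induced norm $\|\mathcal{I}(G)\|_\infty$.

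Next I would bound an arbitrary eigenvalue by this quantity via the standard extremal-coordinate argument. Let $\lambda$ be an eigenvalue of $\mathcal{I}(G)$ with eigenvector $X=(x_1,\ldots,x_{2n})^T$, and choose an index $k$ with $|x_k|=\max_t|x_t|>0$. Writing $m_{kt}$ for the entries of $\mathcal{I}(G)$ and reading off the $k$-th coordinate of $\mathcal{I}(G)X=\lambda X$, I obtain $|\lambda|\,|x_k|=\bigl|\sum_t m_{kt}x_t\bigr|\le\sum_t m_{kt}|x_t|\le |x_k|\sum_t m_{kt}$, where the final inequality uses $|x_t|\le|x_k|$ together with $m_{kt}\ge 0$. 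Dividing by $|x_k|>0$ shows that $|\lambda|$ is at most the $k$-th row sum, hence at most $\max\{\Delta_1(G),\Delta_2(G)\}$.

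I do not expect a genuine obstacle here: the bound is the usual $\infty$-norm (Gershgorin-type) estimate specialized to a nonnegative symmetric matrix, and the only paper-specific content is the row-sum computation identifying the two families of row sums with $d(u)+d^+(u)$ and $d(u)+d^-(u)$. The one point I would state carefully is the loop convention that makes each loop contribute $2$ to the undirected row sum, matching the definition of $\mathcal{I}(G)$; for a simple mixed graph there are no loops or directed loops, so the diagonal vanishes and this subtlety does not even intervene.
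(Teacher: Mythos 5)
Your proof is correct. It is not, however, quite the paper's route: the paper's proof is a one-liner that transfers the problem to the associated graph via $\mathcal{I}(G)=A(G^A)$, observes that $\Delta(G^A)=\max\{\Delta_1(G),\Delta_2(G)\}$, and then cites Proposition~1.1.1 of Cvetkovi\'{c}--Rowlinson--Simi\'{c} (the bound $|\lambda|\le\Delta(G)$ for the adjacency matrix of a simple graph). Your argument in effect opens up that citation and proves it inline: your identification of the $2n$ row sums of $\mathcal{I}(G)$ with the quantities $d(u)+d^+(u)$ and $d(u)+d^-(u)$ is exactly the content of the paper's equality $\Delta(G^A)=\max\{\Delta_1(G),\Delta_2(G)\}$, since $d_{G^A}(v'_i)=d(v_i)+d^+(v_i)$ and $d_{G^A}(v''_i)=d(v_i)+d^-(v_i)$; you then finish with the standard extremal-coordinate estimate instead of a citation. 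What your version buys: it is self-contained; it never actually needs the identity $\mathcal{I}(G)=A(G^A)$ (you invoke it in your opening sentence but then make no use of it, so you could delete it); and since it uses only nonnegativity of the entries, it proves the bound for arbitrary mixed graphs with loops and multiple edges or arcs, whereas the paper's hypothesis of simplicity is there chiefly so that $G^A$ is a simple graph and the cited proposition applies verbatim. What the paper's version buys: brevity, and consistency with its systematic strategy of deducing every spectral statement about $\mathcal{I}(G)$ from the corresponding classical statement about $A(G^A)$.
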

\begin{proof}
	Notice that $\Delta(G^A)=\max\{\Delta_1(G),\Delta_2(G)\}$. Since $\mathcal{I}(G)$ and $A(G^A)$ have the same spectrum, the result follows in view of \cite[Proposition~1.1.1]{cvetkovic}: ``Let $G$ be a simple graph. Then $|\lambda|\leq \Delta(G)$ for each eigenvalue $\lambda$ of $A(G)$''. 
\end{proof}

\begin{thm}\label{thm-bound2}
	If $G$ is a simple mixed graph, then $$\min\{\delta_1(G),\delta_2(G)\}\leq\frac{1}{2}\left(\delta_1(G)+\delta_2(G)\right)\leq\bm{\lambda}_1(G)\leq \max\{\Delta_1(G),\Delta_2(G)\}.$$
\end{thm}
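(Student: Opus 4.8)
The plan is to transfer everything to the associated graph, using $\mathcal{I}(G)=A(G^A)$, so that $\bm\lambda_1(G)$ is simply the largest adjacency eigenvalue of the simple graph $G^A$ and the four quantities in the display become functions of the degree sequence of $G^A$. I would dispose of the two outer inequalities first. The leftmost one, $\min\{\delta_1(G),\delta_2(G)\}\le\tfrac12(\delta_1(G)+\delta_2(G))$, is just the elementary fact that the mean of two reals is at least their minimum. For the rightmost one I would observe that $\max\{\Delta_1(G),\Delta_2(G)\}$ is precisely the maximum degree of $G^A$ --- reading off from the block form \eqref{adjmatrix block} that $v'_i$ has degree $d(v_i)+d^+(v_i)$ and $v''_i$ has degree $d(v_i)+d^-(v_i)$ --- and then quote Theorem~\ref{thm-bound1}, which already bounds every eigenvalue of $\mathcal{I}(G)$ in absolute value by $\max\{\Delta_1(G),\Delta_2(G)\}$; in particular $\bm\lambda_1(G)$ satisfies it.

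The heart of the argument is the middle inequality $\tfrac12(\delta_1(G)+\delta_2(G))\le\bm\lambda_1(G)$, which I would prove by a Rayleigh quotient estimate. Since $\mathcal{I}(G)$ is real symmetric of order $2n$, we have $\bm\lambda_1(G)\ge \mathbf{1}_{2n}^{T}\mathcal{I}(G)\mathbf{1}_{2n}/(\mathbf{1}_{2n}^{T}\mathbf{1}_{2n})$, where the numerator is the sum of all row sums of $\mathcal{I}(G)$ and the denominator equals $2n$. Using \eqref{adjmatrix block}, each of the $n$ rows indexed by a primed vertex has row sum $d(v_i)+d^+(v_i)\ge\delta_1(G)$, and each of the $n$ rows indexed by a doubly-primed vertex has row sum $d(v_i)+d^-(v_i)\ge\delta_2(G)$. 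Summing these $2n$ lower bounds and dividing by $2n$ gives exactly $\tfrac12(\delta_1(G)+\delta_2(G))$, as required.

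The step that needs care, and where a naive approach falls short, is this last estimate. The familiar bound $\lambda_1\ge\bar d$ only produces the average degree of $G^A$, while the cruder bound $\lambda_1\ge\delta(G^A)=\min\{\delta_1(G),\delta_2(G)\}$ is too weak to reach the claimed constant $\tfrac12(\delta_1(G)+\delta_2(G))$. The point I would emphasize is that the all-ones vector exploits the balanced split of $V_{G^A}$ into the $n$ primed vertices (whose degrees are governed by $\delta_1$) and the $n$ doubly-primed vertices (whose degrees are governed by $\delta_2$); because the two classes have equal cardinality, the average degree is forced to be at least $\tfrac12(\delta_1(G)+\delta_2(G))$ rather than merely $\min\{\delta_1(G),\delta_2(G)\}$, and that is exactly what makes the quotient land on the sharp value.
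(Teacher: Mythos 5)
Your proposal is correct and follows essentially the same route as the paper: the middle inequality via the Rayleigh quotient of $\mathcal{I}(G)$ at the all-ones vector $\mathbf{1}_{2n}$, with the row sums read off from the block form \eqref{adjmatrix block}, and the upper bound quoted from Theorem~\ref{thm-bound1}. Your closing remark about why the balanced split into primed and doubly-primed vertices yields $\tfrac12(\delta_1(G)+\delta_2(G))$ rather than the weaker $\min\{\delta_1(G),\delta_2(G)\}$ is a nice articulation of what the paper's computation implicitly exploits, but it is the same argument.
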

\begin{proof}
	Let $G$ has $n$ vertices. Let $X$ be a column vector in $\mathbb{R}^{2n}$ with the usual Euclidean norm. Consider the extremal representation
	\begin{eqnarray*}
		\nonumber\bm{\lambda}_1(G)&=&\underset{\lVert X\rVert=1}{\max}\{X^T\mathcal{I}(G)X\}\\\nonumber
		&=&\underset{X\neq 0}{\max}\bigg\{\frac{X^T\mathcal{I}(G)X}{X^TX}\bigg\}.
	\end{eqnarray*}
	This implies that
	\begin{eqnarray}
		\nonumber
		\bm{\lambda}_1(G)&\geq&\frac{\boldsymbol{1}_{2n}^T\mathcal{I}(G)\boldsymbol{1}_{2n}}{\boldsymbol{1}_{2n}^T\boldsymbol{1}_{2n}}\\\nonumber
		&=&\frac{1}{2n}\bigg(\underset{u\in V_G}{\sum}(d(u)+d^+(u))+\underset{u\in V_G}{\sum}(d(u)+d^-(u))\bigg)\\\nonumber
		&\geq&\frac{1}{2}(\delta_1(G)+\delta_2(G))\\\nonumber
		&\geq&\min\{\delta_1(G),\delta_2(G)\}.
	\end{eqnarray}	
	By Theorem~\ref{thm-bound1}, $\bm{\lambda}_1(G)\leq \max\{\Delta_1(G),\Delta_2(G)\}$. Hence, the result follows.
\end{proof}

The largest $\mathcal{I}$-eigenvalue of a mixed graph $G$, i.e., $\bm{\lambda}_1(G)$ is referred to as the $\mathcal{I}$-\textit{spectral radius of $G$}.

\begin{thm}
	Let $G$ be a mixed graph on $n$ vertices. Then $\displaystyle\frac{1}{2}(\bm{\lambda}_1(G)+\bm{\lambda}_{2n}(G))\leq\lambda_1(G_u)$.
\end{thm}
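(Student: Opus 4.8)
The plan is to exploit the $2\times 2$ block form \eqref{adjmatrix block} of $\mathcal{I}(G)$, in which both diagonal blocks equal $A(G_u)$, together with a sign-reflection of the top eigenvector. Writing $\mathcal{I}(G)=\begin{bmatrix} A(G_u) & \vec{A}(G_d)\\ \vec{A}(G_d)^T & A(G_u)\end{bmatrix}$, I would let $X=\begin{bmatrix} x\\ y\end{bmatrix}$ be a unit eigenvector of $\mathcal{I}(G)$ for $\bm{\lambda}_1(G)$, where $x,y\in\mathbb{R}^n$, so that $\lVert x\rVert^2+\lVert y\rVert^2=1$. The first step is to record the Rayleigh quotient $\bm{\lambda}_1(G)=X^T\mathcal{I}(G)X=x^TA(G_u)x+y^TA(G_u)y+2x^T\vec{A}(G_d)y$, where the two cross terms have been combined using $y^T\vec{A}(G_d)^Tx=x^T\vec{A}(G_d)y$.

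The key idea is to introduce the reflected vector $X'=\begin{bmatrix} x\\ -y\end{bmatrix}$, which is again a unit vector. Computing $X'^T\mathcal{I}(G)X'$ reproduces the same diagonal contributions but flips the sign of the coupling term, giving $X'^T\mathcal{I}(G)X'=x^TA(G_u)x+y^TA(G_u)y-2x^T\vec{A}(G_d)y$. Adding the two Rayleigh quotients cancels the off-diagonal contribution, so that $X^T\mathcal{I}(G)X+X'^T\mathcal{I}(G)X'=2\left(x^TA(G_u)x+y^TA(G_u)y\right)$. Since $X'$ is a unit vector, the Rayleigh--Ritz bound yields $X'^T\mathcal{I}(G)X'\geq\bm{\lambda}_{2n}(G)$, and therefore $\bm{\lambda}_1(G)+\bm{\lambda}_{2n}(G)\leq 2\left(x^TA(G_u)x+y^TA(G_u)y\right)$.

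It then remains to bound the right-hand side by $2\lambda_1(G_u)$. Because $A(G_u)$ is real symmetric with largest eigenvalue $\lambda_1(G_u)$, one has $x^TA(G_u)x\leq\lambda_1(G_u)\lVert x\rVert^2$ and $y^TA(G_u)y\leq\lambda_1(G_u)\lVert y\rVert^2$; summing and using $\lVert x\rVert^2+\lVert y\rVert^2=1$ gives $x^TA(G_u)x+y^TA(G_u)y\leq\lambda_1(G_u)$. Combining this with the previous estimate produces $\tfrac12(\bm{\lambda}_1(G)+\bm{\lambda}_{2n}(G))\leq\lambda_1(G_u)$, as required.

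I expect no serious obstacle here; the only genuine idea is the sign-reflection $X\mapsto X'$, which is available precisely because the two diagonal blocks of $\mathcal{I}(G)$ coincide and the coupling occurs solely through the off-diagonal block $\vec{A}(G_d)$. The routine points to check are that $X'$ remains a unit vector (so the lower bound $X'^T\mathcal{I}(G)X'\geq\bm{\lambda}_{2n}(G)$ is legitimate) and that the reflection changes the sign of the cross term $2x^T\vec{A}(G_d)y$ without altering its magnitude.
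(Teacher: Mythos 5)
Your proof is correct. The paper disposes of this theorem in one line by invoking \cite[Proposition~1.3.16]{cvetkovic} --- if $M=\begin{bmatrix}P&Q\\Q^T&R\end{bmatrix}$ is real symmetric, then $\lambda_1(M)+\lambda_n(M)\leq\lambda_1(P)+\lambda_1(R)$ --- applied with $P=R=A(G_u)$ and $Q=\vec{A}(G_d)$, i.e.\ exactly the block decomposition \eqref{adjmatrix block} you start from. What you have done is reprove that proposition in the special case $P=R$: the sign-reflection $X\mapsto X'=\begin{bmatrix}x\\-y\end{bmatrix}$, the identities $\bm{\lambda}_1(G)=X^T\mathcal{I}(G)X$ and $X'^T\mathcal{I}(G)X'\geq\bm{\lambda}_{2n}(G)$, and the final bound $x^TA(G_u)x+y^TA(G_u)y\leq\lambda_1(G_u)\left(\lVert x\rVert^2+\lVert y\rVert^2\right)=\lambda_1(G_u)$ constitute precisely the standard argument behind the cited result. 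So the mathematical route is the same; the only difference is that your version is self-contained where the paper outsources the key inequality to a citation. All your steps check out, including the two points you flag ($\lVert X'\rVert=1$, and the reflection flipping only the sign of the cross term $2x^T\vec{A}(G_d)y$), and your argument requires no simplicity assumption on $G$, which matches the theorem's hypothesis (``mixed graph,'' not ``simple mixed graph'').
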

\begin{proof}
	The proof follows by taking $M=\mathcal{I}(G)$, $P=R=A(G_u)$, and $Q=\vec{A}(G_d)$ in \cite[Proposition~1.3.16]{cvetkovic}: ``If $M$ is a symmetric $n\times n$ matrix with real entries and 
	$$M=\begin{bmatrix}
		P&Q\\
		Q^T&R
	\end{bmatrix},$$ then $\lambda_1(M)+\lambda_n(M)\leq\lambda_1(P)+\lambda_1(R)$''.
\end{proof}

\begin{thm}\label{thm-bound4}
	Let $G$ be a mixed graph on $n$ vertices. Then $\bm{\lambda}_{2n-1}(G)\leq\displaystyle\frac{1}{n}[2e(G)+2l(G)+a(G)]\leq\bm{\lambda}_1(G)$ and $\bm{\lambda}_{2n}(G)\leq\displaystyle\frac{1}{n}[2e(G)+2l(G)-a(G)]\leq\bm{\lambda}_2(G)$.
\end{thm}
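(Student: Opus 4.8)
The plan is to apply the Rayleigh quotient and Courant--Fischer min-max characterizations of the eigenvalues of the real symmetric matrix $\mathcal{I}(G)$ to two specific test vectors, namely $\boldsymbol{1}_{2n}$ and $Y:=\begin{bmatrix}\boldsymbol{1}_n^T & -\boldsymbol{1}_n^T\end{bmatrix}^T$, the same vectors that appear in Theorem~\ref{thm-rs regular}. Writing the Rayleigh quotient as $R(X)=\dfrac{X^T\mathcal{I}(G)X}{X^TX}$, I would first evaluate it at these two vectors using the block form~\eqref{adjmatrix block}. Reading off the row sums of $\mathcal{I}(G)$ and applying Proposition~\ref{pro-degreesum} (so that $\sum_u d(u)=2e(G)$ and $\sum_u d^+(u)=\sum_u d^-(u)=a(G)$), a direct computation gives
$$\boldsymbol{1}_{2n}^T\mathcal{I}(G)\boldsymbol{1}_{2n}=4e(G)+2a(G),\qquad Y^T\mathcal{I}(G)Y=4e(G)-2a(G),$$
while $\boldsymbol{1}_{2n}^T\boldsymbol{1}_{2n}=Y^TY=2n$. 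Hence $R(\boldsymbol{1}_{2n})=\tfrac1n[2e(G)+a(G)]$ and $R(Y)=\tfrac1n[2e(G)-a(G)]$.

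The key observation is that these two vectors are orthogonal ($\boldsymbol{1}_{2n}^TY=0$) and, moreover, the off-diagonal coupling vanishes, i.e. $\boldsymbol{1}_{2n}^T\mathcal{I}(G)Y=0$; this follows from the same entry-sum computation, since the contributions of $\vec{A}(G_d)$ and $\vec{A}(G_d)^T$ cancel (the surviving terms are $-\sum_u d^+(u)+\sum_u d^-(u)=0$). Consequently, on the two-dimensional subspace $S=\operatorname{span}\{\boldsymbol{1}_{2n},Y\}$ the matrix $\mathcal{I}(G)$ acts diagonally in the basis $\{\boldsymbol{1}_{2n},Y\}$, so for every nonzero $Z=a\boldsymbol{1}_{2n}+bY\in S$,
$$R(Z)=\frac{a^2[2e(G)+a(G)]+b^2[2e(G)-a(G)]}{n(a^2+b^2)},$$
which, because $a(G)\ge 0$, always lies between $\tfrac1n[2e(G)-a(G)]$ and $\tfrac1n[2e(G)+a(G)]$. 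In particular $\min_{Z\in S}R(Z)=\tfrac1n[2e(G)-a(G)]$ and $\max_{Z\in S}R(Z)=\tfrac1n[2e(G)+a(G)]$.

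With these ingredients the four inequalities follow immediately. The bounds $\tfrac1n[2e(G)+a(G)]\le\bm{\lambda}_1(G)$ and $\bm{\lambda}_{2n}(G)\le\tfrac1n[2e(G)-a(G)]$ come from the extremal characterizations $\bm{\lambda}_1(G)=\max_{X\ne 0}R(X)$ and $\bm{\lambda}_{2n}(G)=\min_{X\ne 0}R(X)$ evaluated at $\boldsymbol{1}_{2n}$ and $Y$. For the remaining two, I would apply Courant--Fischer to the subspace $S$ (of dimension $2$): the min-max form gives $\bm{\lambda}_{2n-1}(G)=\min_{\dim V=2}\max_{0\ne X\in V}R(X)\le\max_{Z\in S}R(Z)=\tfrac1n[2e(G)+a(G)]$, and the max-min form gives $\bm{\lambda}_2(G)=\max_{\dim V=2}\min_{0\ne X\in V}R(X)\ge\min_{Z\in S}R(Z)=\tfrac1n[2e(G)-a(G)]$.

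The computations are routine; the one genuinely load-bearing step is verifying that the cross term $\boldsymbol{1}_{2n}^T\mathcal{I}(G)Y$ vanishes. This is exactly what decouples the two test vectors and makes $S$ an $\mathcal{I}(G)$-invariant (indeed diagonalized) plane, so that the two-dimensional Courant--Fischer estimates pin down $\bm{\lambda}_2(G)$ and $\bm{\lambda}_{2n-1}(G)$ and not merely $\bm{\lambda}_1(G)$ and $\bm{\lambda}_{2n}(G)$. Everything else is bookkeeping on degree sums already packaged in Proposition~\ref{pro-degreesum}.
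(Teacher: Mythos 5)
Your proof is correct, but it takes a genuinely different route from the paper. The paper's proof is a two-line citation: it forms the $2\times 2$ quotient matrix $\begin{bmatrix}\frac{2e(G)}{n} & \frac{a(G)}{n}\\ \frac{a(G)}{n} & \frac{2e(G)}{n}\end{bmatrix}$ whose entries are the average row sums of the blocks in \eqref{adjmatrix block}, and then invokes the block-partition interlacing result \cite[Corollary~1.3.13]{cvetkovic}, whose conclusion for a $2\times 2$ quotient of a $2n\times 2n$ symmetric matrix is exactly the four stated inequalities. What you do instead is re-derive that interlacing statement by hand in this special case: your vectors $\boldsymbol{1}_{2n}$ and $Y$ are precisely the lifts of the eigenvectors $(1,1)^T$ and $(1,-1)^T$ of the paper's quotient matrix, the vanishing cross term $\boldsymbol{1}_{2n}^T\mathcal{I}(G)Y=0$ is what diagonalizes the quotient form in that basis, and the two-dimensional Courant--Fischer estimates then give $\bm{\lambda}_{2n-1}(G)$ and $\bm{\lambda}_2(G)$, while the plain Rayleigh bounds give $\bm{\lambda}_1(G)$ and $\bm{\lambda}_{2n}(G)$. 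The trade-off: the paper's argument is shorter and generalizes instantly to finer partitions, while yours is self-contained, needs no external interlacing theorem, and makes visible exactly where the numbers $\frac{1}{n}[2e(G)\pm a(G)]$ come from. All of your computations check out, including the index bookkeeping in Courant--Fischer.

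One small inaccuracy worth fixing: the vanishing of $\boldsymbol{1}_{2n}^T\mathcal{I}(G)Y$ does \emph{not} make $S=\operatorname{span}\{\boldsymbol{1}_{2n},Y\}$ an $\mathcal{I}(G)$-invariant subspace, nor does $\mathcal{I}(G)$ ``act diagonally'' on $S$; invariance would require $\mathcal{I}(G)\boldsymbol{1}_{2n}\in S$, which holds essentially only when $G$ is regular in the sense of Theorem~\ref{thm-rs regular}. What you actually have, and all you need, is that the restriction of the \emph{quadratic form} $X\mapsto X^T\mathcal{I}(G)X$ to $S$ is diagonal in the basis $\{\boldsymbol{1}_{2n},Y\}$. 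Your formula for $R(Z)$ uses only this, so the argument stands; just phrase that step in terms of the form rather than the operator.
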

\begin{proof}
	Consider the matrix $\mathcal{I}(G)$ as described in \eqref{adjmatrix block}. Here, the sums of the entries of $A(G_u)$ and $\vec{A}(G_d)$ are $2e(G)+2l(G)$ and $a(G)$, respectively. As per \cite[Corollary~1.3.13]{cvetkovic}, the eigenvalues $\displaystyle\frac{1}{n}[2e(G)+2l(G)+a(G)]$ and $\displaystyle\frac{1}{n}[2e(G)+2l(G)-a(G)]$
	of the matrix 
	$$\begin{bmatrix}
		\frac{2e(G)+2l(G)}{n} & \frac{a(G)}{n}\\
		\frac{a(G)}{n} & \frac{2e(G)+2l(G)}{n}
	\end{bmatrix}$$
	interlace the eigenvalues of $\mathcal{I}(G)$. This completes the proof.
\end{proof}

\begin{thm}\label{thm-bound5}
	Let $G$ be a simple mixed graph on $n$ vertices. Then $$\displaystyle\bm{\lambda}_1(G)\leq \left(\frac{(2n-1)(2e(G)+a(G))}{n}\right)^{\frac{1}{2}}.$$
\end{thm}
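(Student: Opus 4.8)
The plan is to exploit the two trace identities established in Theorem~\ref{thm-traceA}, namely $\sum_{i=1}^{2n}\bm{\lambda}_i(G)=0$ and $\sum_{i=1}^{2n}\bm{\lambda}_i(G)^2=4e(G)+2a(G)$, and then to bound $\bm{\lambda}_1(G)$ against the remaining $2n-1$ eigenvalues by the Cauchy--Schwarz inequality. This is the classical argument for bounding the spectral radius of a trace-zero real symmetric matrix by the square root of a fraction of the sum of squares of its eigenvalues, specialised here to $\mathcal{I}(G)$, which has order $2n$.

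First I would isolate $\bm{\lambda}_1(G)$ using the vanishing of the trace: since $\sum_{i=1}^{2n}\bm{\lambda}_i(G)=0$, we have $\sum_{i=2}^{2n}\bm{\lambda}_i(G)=-\bm{\lambda}_1(G)$. Applying the Cauchy--Schwarz inequality (equivalently, the power-mean inequality) to the $2n-1$ real numbers $\bm{\lambda}_2(G),\ldots,\bm{\lambda}_{2n}(G)$ gives
$$\sum_{i=2}^{2n}\bm{\lambda}_i(G)^2\;\geq\;\frac{1}{2n-1}\left(\sum_{i=2}^{2n}\bm{\lambda}_i(G)\right)^{2}\;=\;\frac{\bm{\lambda}_1(G)^2}{2n-1}.$$
Next I would substitute the second identity. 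Writing $\sum_{i=2}^{2n}\bm{\lambda}_i(G)^2=\big(4e(G)+2a(G)\big)-\bm{\lambda}_1(G)^2$, the inequality above becomes $4e(G)+2a(G)-\bm{\lambda}_1(G)^2\geq \bm{\lambda}_1(G)^2/(2n-1)$. Rearranging yields $\bm{\lambda}_1(G)^2\cdot\tfrac{2n}{2n-1}\leq 4e(G)+2a(G)=2\big(2e(G)+a(G)\big)$, hence $\bm{\lambda}_1(G)^2\leq \frac{(2n-1)\big(2e(G)+a(G)\big)}{n}$, and taking square roots gives the claimed bound.

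There is no substantive obstacle in this argument; the only point requiring a word of care is the square-root step, which needs $\bm{\lambda}_1(G)\geq 0$. This is immediate from $\sum_{i=1}^{2n}\bm{\lambda}_i(G)=0$: the largest eigenvalue of a trace-zero symmetric matrix cannot be negative, since otherwise every eigenvalue would be negative and their sum could not be zero. When $\bm{\lambda}_1(G)=0$ the inequality holds trivially, and otherwise $\bm{\lambda}_1(G)>0$ lets us pass to square roots while preserving the inequality, completing the proof.
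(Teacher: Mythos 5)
Your proof is correct, and it takes a genuinely different route from the paper. The paper's proof is a one-line reduction: it observes that $\mathcal{I}(G)=A(G^A)$ where $G^A$ is a simple graph on $2n$ vertices with $2e(G)+a(G)$ edges, and then cites the known bound $\lambda_1\leq\bigl(\tfrac{2m(n-1)}{n}\bigr)^{1/2}$ for simple graphs (Theorem~3.15 in Bapat's book) applied to $G^A$. You instead work directly with $\mathcal{I}(G)$, combining the two trace identities of Theorem~\ref{thm-traceA} with the Cauchy--Schwarz inequality applied to $\bm{\lambda}_2(G),\ldots,\bm{\lambda}_{2n}(G)$; your algebra checks out, $\bm{\lambda}_1(G)^2\cdot\tfrac{2n}{2n-1}\leq 2\bigl(2e(G)+a(G)\bigr)$ does rearrange to the stated bound, and your justification that $\bm{\lambda}_1(G)\geq 0$ (a trace-zero symmetric matrix cannot have all eigenvalues negative) correctly handles the square-root step. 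In effect you have reproduced, inside the paper's own framework, the standard proof of the very theorem the paper cites as a black box. What each approach buys: the paper's argument is shorter and consistent with its systematic strategy of transferring every spectral question to the associated graph $G^A$, but it depends on an external reference; your argument is fully self-contained, resting only on Theorem~\ref{thm-traceA}, which the paper proves, and it makes transparent exactly where the constants $2n-1$ and $n$ come from.
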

\begin{proof}
	$G^A$ is a simple graph on $2n$ vertices, and the number of edges of $G^A$ is $2e(G)+a(G)$. Since $\mathcal{I}(G)$ and $A(G^A)$ have the same spectrum, the proof follows from \cite[Theorem~3.15]{bapatbook}: ``Let $G$ be a simple graph on $n$ vertices and $m$ edges. Then $\lambda_1(G)\leq \left(\frac{2m(n-1)}{n}\right)^{\frac{1}{2}}$''.
\end{proof}
\begin{thm}\label{thm-bound6}
	Let $G$ be a simple uniconnected mixed graph. Then $\bm{\lambda}_1(G-u)<\bm{\lambda}_1(G)$ for $u\in V_G$.
\end{thm}
\begin{proof}
	According to Lemma~\ref{uniconnected connected}, $G^A$ is a simple connected graph. For $u\in V_G$, we consider the corresponding vertices $u'$ and $u''$ of $G^A$. Let $G'=G-u$ and $G_1=G^A-u'$. Then $G'^A=G_1-u''=G^A-\{u',u''\}$. From  \cite[Proposition~1.3.9]{cvetkovic}: ``Let $G$ be a simple connected graph. If $u\in V_G$, then $\lambda_1(G-u)<\lambda_1(G)$'', we have $\lambda_1(G_1)<\lambda_1(G^A)$. Since $G'^A$ is an induced subgraph of $G_1$, as per \cite[Corollary~1.3.12]{cvetkovic}: ``Let $G$ be a simple graph and let $H$ be an induced subgraph of $G$. Then the eigenvalues of $A(H)$ interlace the eigenvalues of $A(G)$'', we have $\lambda_1(G'^A)\leq \lambda_1(G_1)$. This implies that $\lambda_1(G'^A)<\lambda_1(G^A)$. Since $\bm{\lambda}_1(G)=\lambda_1(G^A)$
	and $\bm{\lambda}_1(G')=\lambda_1(G'^A)$, the result follows.
\end{proof}

\begin{thm}\label{thm-bound7}
	Let $G$ be a simple uniconnected mixed graph. Then $\bm{\lambda}_1(G-a)<\bm{\lambda}_1(G)$ for $a\in \vec{E}_G$.
\end{thm}
\begin{proof}
	In view of Lemma~\ref{uniconnected connected}, $G^A$ is a simple connected graph. For $a\in \vec{E}_G$, we consider the corresponding edge $e$ of $G^A$.
	Let $G'=G-a$. Then $G'^A=G^A-e$. Since $\bm{\lambda}_1(G)=\lambda_1(G^A)$ and $\bm{\lambda}_1(G')=\lambda_1(G'^A)$, the result follows from  \cite[Proposition~1.3.10]{cvetkovic}: ``Let $G$ be a simple connected graph. If $e\in E_G$, then $\lambda_1(G-e)<\lambda_1(G)$''. 
\end{proof}

\begin{thm}\label{ineq}
	Let $G$ be an $r$-regular simple mixed graph on $n$ vertices. If $G'$ is an induced submixed graph of $G$ with $n'<n$ vertices, then $$\bm{\lambda}_{2n}(G)\leq \displaystyle\frac{nd'-rn'}{n-n'}\leq\bm{\lambda}_2(G),$$
	where $d'=\displaystyle\frac{1}{2n'}\underset{u\in V_{G'}}{\sum}[2d(u)+d^+(u)+d^-(u)]$.
\end{thm}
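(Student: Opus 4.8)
The plan is to transfer the whole statement to the associated graph $G^A$, where it becomes a classical bound for induced subgraphs of regular graphs obtained via quotient-matrix interlacing. By Lemma~\ref{regular}, the $r$-regularity of $G$ gives that $G^A$ is an $r$-regular simple graph on $2n$ vertices; and since $G'$ is induced in $G$, exactly as in the proof of Theorem~\ref{thm-interlace} its associated graph $G'^A$ is the induced subgraph of $G^A$ on the $2n'$ vertices $S:=\{v_i',v_i''\mid v_i\in V_{G'}\}$. Because $\mathcal{I}(G)=A(G^A)$, so that $\bm{\lambda}_i(G)=\lambda_i(G^A)$, it suffices to prove $\lambda_{2n}(G^A)\le \frac{nd'-rn'}{n-n'}\le\lambda_2(G^A)$.

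The first key step is to identify $d'$ as the average degree of the induced subgraph $G'^A$. Recall from the proof of Lemma~\ref{regular} that a vertex $v_i'$ of $G^A$ is joined to $v_j'$ exactly when $v_i\sim v_j$ in $G$ and to $v_j''$ exactly when $v_i\to v_j$; restricting to those neighbours lying in $S$ (i.e.\ with $v_j\in V_{G'}$) shows $\deg_{G'^A}(v_i')=d(v_i)+d^+(v_i)$ and, symmetrically, $\deg_{G'^A}(v_i'')=d(v_i)+d^-(v_i)$, where $d,d^+,d^-$ are computed \emph{within} $G'$. Summing over the $2n'$ vertices of $S$ produces total degree $\sum_{v_i\in V_{G'}}[2d(v_i)+d^+(v_i)+d^-(v_i)]$, so the average degree of $G'^A$ is precisely $d'$.

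Next I would form the quotient matrix of $A(G^A)$ for the partition $\{S,\ V_{G^A}\setminus S\}$. Using $r$-regularity, the average number of edges from $S$ to its complement per vertex of $S$ is $r-d'$, and counting these edges from the other side fixes the remaining two entries, giving
\[
B=\begin{bmatrix} d' & r-d'\\[2pt] \dfrac{n'(r-d')}{n-n'} & \;r-\dfrac{n'(r-d')}{n-n'}\end{bmatrix}.
\]
Each row of $B$ sums to $r$, so $r$ is an eigenvalue of $B$ with eigenvector $\mathbf{1}_2$; computing the trace, the second eigenvalue is $\mu=d'-\frac{n'(r-d')}{n-n'}=\frac{nd'-rn'}{n-n'}$. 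Applying \cite[Corollary~1.3.13]{cvetkovic}, the eigenvalues $r\ge\mu$ of $B$ interlace those of $A(G^A)$. Since $G^A$ is $r$-regular, $r=\lambda_1(G^A)$ is its largest eigenvalue, so the interlacing (for a two-part partition) forces $\lambda_{2n}(G^A)\le\mu\le\lambda_2(G^A)$; transferring back through $\mathcal{I}(G)=A(G^A)$ yields the claim.

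The main obstacle I anticipate is the degree bookkeeping in the second step: one must read off the induced degrees of $v_i'$ and $v_i''$ in $G^A$ correctly and, crucially, be clear that the $d(u),d^+(u),d^-(u)$ appearing in $d'$ denote degrees within $G'$ and not within $G$ (the latter reading would collapse $d'$ to $r$ and render the bound false). Once that identification is secured, the remainder is routine $2\times2$ linear algebra together with the cited interlacing theorem.
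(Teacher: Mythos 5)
Your proposal is correct and takes essentially the same approach as the paper: reduce to the associated graph $G^A$ via Lemma~\ref{regular}, identify $d'$ (with degrees computed within $G'$) as the average degree of the induced subgraph $G'^A$, and conclude by interlacing. The only difference is that the paper invokes \cite[Theorem~3.5.7]{cvetkovic} as a black box, and that cited theorem is precisely the quotient-matrix interlacing computation you carry out explicitly.
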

\begin{proof}
	According to Lemma~\ref{regular}, $G^A$ is an $r$-regular simple graph. Clearly, $G'^A$ is an induced subgraph of $G^A$ with $2n'$ vertices, and $d'=\frac{1}{2n'}\underset{u\in V_{G'}}{\sum}[2d(u)+d^+(u)+d^-(u)]$ is the average degree of $G'^A$.  In view of \cite[Theorem~3.5.7]{cvetkovic}: ``Let $G$ be an $r$-regular simple graph with $n$ vertices. Let $G_1$ be an induced subgraph of $G$ with $n_1$ vertices and average degree $d_1$. Then $$\frac{n_1(r-\lambda_{n}(G))}{n}+\lambda_{n}(G)\leq d_1\leq \frac{n_1(r-\lambda_{2}(G))}{n}+\lambda_{2}(G)\text{'',}$$ 
	we get the inequalities 
	$$\lambda_{2n}(G^A)\leq \frac{nd'-rn'}{n-n'}\leq \lambda_{2}(G^A).$$
	Since $\mathcal{I}(G)$ and $A(G^A)$ have the same spectrum, the result follows.
\end{proof}


\begin{lemma}\label{alphalemma}
	For a mixed graph $G$, $2\alpha(G)\leq\alpha(G^A)$.
\end{lemma}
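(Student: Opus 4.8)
The plan is to prove the equivalent statement $\alpha(G^A)\ge 2\alpha(G)$ by exhibiting an explicit independent set in $G^A$ whose cardinality is exactly twice the independence number of $G$. The natural device is a ``doubling'' construction: since every vertex $v_i$ of $G$ gives rise to the two vertices $v_i'$ and $v_i''$ of $G^A$, one lifts a maximum independent set of $G$ to the set of all of its primed and double-primed copies.

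Concretely, I would first fix a maximum independent set $S=\{v_{i_1},\dots,v_{i_k}\}$ of $G$, so that $k=\alpha(G)$, and then set
$$ S^A = \{\, v_i' : v_i \in S \,\} \cup \{\, v_i'' : v_i \in S \,\}. $$
Because the primed and double-primed vertices are distinct, $|S^A|=2k=2\alpha(G)$. It then remains only to verify that $S^A$ is an independent set of $G^A$; once this is done, the inequality $\alpha(G^A)\ge |S^A|=2\alpha(G)$ is immediate, which is the claim.

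The verification is a short case analysis on an arbitrary pair of distinct vertices $x,y\in S^A$, driven by the two rules defining $G^A$. If $x=v_i'$ and $y=v_j'$, or if both are double-primed, then $x$ and $y$ are adjacent in $G^A$ precisely when $v_i\sim v_j$ in $G$ (the first defining rule, equivalently the fact that each copy of $V_G$ spans a subgraph isomorphic to $G_u$); this is impossible since $v_i,v_j\in S$ are non-adjacent. If $x=v_i'$ and $y=v_j''$ with $i\ne j$, then by the second defining rule adjacency would force an arc $v_i\rightarrow v_j$ in $G$, which is again excluded because $v_i$ and $v_j$ lie in the independent set $S$. Hence no two vertices of $S^A$ are adjacent.

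The step demanding the most care is the mixed (primed versus double-primed) case, since it is the only place where the arcs of $G$ — rather than its edges — control adjacency in $G^A$, and thus the only place where the orientation of $G$ enters. One should also treat the degenerate subcase $i=j$, i.e.\ the pair $v_i',v_i''$: these are adjacent in $G^A$ exactly when $G$ has a directed loop at $v_i$. The argument therefore goes through verbatim for simple mixed graphs (which have no directed loops), and in the general case such loop-bearing vertices do not belong to independent sets, so the construction is unaffected.
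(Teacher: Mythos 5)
Your proposal is correct and follows essentially the same route as the paper: lift a maximum independent set $S$ of $G$ to its primed and double-primed copies in $G^A$ and check independence via the two defining rules of $G^A$. In fact your treatment is slightly more careful than the paper's, which asserts the independence of the doubled set without the explicit case analysis or the discussion of the degenerate pair $v_i',v_i''$ arising from directed loops.
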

\begin{proof}
	Let $V_G=\{v_1,v_2,\ldots,v_n\}$. Without loss of generality, let $U=\{v_1,v_2,\ldots,v_k\}$ be an independent set of $G$ with $|U|=\alpha(G)$. This implies that no two vertices in $U$ are adjacent in $G$. Therefore, no two vertices in $U'=\{v_1',v_2',\ldots,v_k',v_1'',v_2'',\ldots,v_k''\}$ are adjacent in $G^A$. Thus $U'$ is an independent set of $G^A$, and $\alpha(G^A)\geq |U'|=2\alpha(G)$.
\end{proof}
\begin{thm}
	Let $G$ be a simple mixed graph on $n$ vertices. Let $n^+$ and $n^-$ denote the number of positive and negative eigenvalues of $\mathcal{I}(G)$, respectively. Then $\alpha(G)\leq \min\left\{\frac{2n-n^+}{2},\frac{2n-n^-}{2}\right\}$.
\end{thm}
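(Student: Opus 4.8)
The plan is to transfer the question to the associated graph $G^A$ and apply the classical inertia (Cvetkovi\'c) bound for the independence number of a simple graph, and then pull the conclusion back to $G$ using Lemma~\ref{alphalemma}. The crucial point is that $\mathcal{I}(G)=A(G^A)$, so the integers $n^+$ and $n^-$ are exactly the numbers of positive and negative eigenvalues of $A(G^A)$, and $G^A$ is a simple graph on $2n$ vertices.

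First I would invoke the inertia bound for simple graphs: if $H$ is a simple graph on $m$ vertices whose adjacency matrix has $p$ positive and $q$ negative eigenvalues, then $\alpha(H)\leq\min\{m-p,\,m-q\}$. I expect this to be quotable from \cite{cvetkovic} in the same style as the other propositions cited from that source in this section. Applying it with $H=G^A$, $m=2n$, $p=n^+$, and $q=n^-$ yields
\[
\alpha(G^A)\leq\min\{2n-n^+,\,2n-n^-\}.
\]

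Then I would combine this with Lemma~\ref{alphalemma}, namely $2\alpha(G)\leq\alpha(G^A)$, to obtain
\[
2\alpha(G)\leq\alpha(G^A)\leq\min\{2n-n^+,\,2n-n^-\},
\]
and divide by $2$ (which commutes with the minimum) to reach $\alpha(G)\leq\min\left\{\tfrac{2n-n^+}{2},\tfrac{2n-n^-}{2}\right\}$, as required. Since the whole argument is a short chaining of two inequalities, there is no genuine computational obstacle; the only things to check are the identification of $n^+,n^-$ with the eigenvalue counts of $A(G^A)$ (immediate from $\mathcal{I}(G)=A(G^A)$) and the availability of the inertia bound in the cited source. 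Should that bound not already be quoted, the one extra ingredient is its standard interlacing proof: an independent set $U$ of size $\alpha(G^A)$ yields a zero principal submatrix $A(G^A)[U]$, and Cauchy interlacing then forces at least $\alpha(G^A)$ eigenvalues of $A(G^A)$ to be nonnegative and at least $\alpha(G^A)$ to be nonpositive, whence $\alpha(G^A)\leq\min\{2n-n^+,\,2n-n^-\}$.
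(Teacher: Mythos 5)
Your proposal is correct and follows exactly the paper's own argument: the paper likewise applies the inertia bound \cite[Theorem~3.10.1]{cvetkovic} to the simple graph $G^A$ on $2n$ vertices (using $\mathcal{I}(G)=A(G^A)$ to identify $n^+$ and $n^-$) and then combines it with Lemma~\ref{alphalemma} to get $2\alpha(G)\leq\alpha(G^A)\leq\min\{2n-n^+,\,2n-n^-\}$. Your optional interlacing derivation of the inertia bound is a nice safeguard but unnecessary, since that bound is indeed quotable from the cited source.
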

\begin{proof}
	Since $\mathcal{I}(G)=A(G^A)$, we have that $A(G^A)$ has $n^+$ positive eigenvalues, and $n^-$ negative eigenvalues. Then proof follows from Lemma~\ref{alphalemma} and \cite[Theorem~3.10.1]{cvetkovic}: ``Let $G$ be a simple graph on $n$ vertices. Let $n^+$ and $n^-$ denote the number of positive and negative eigenvalues of $A(G)$, respectively. Then $\alpha(G)\leq\min \{n-n^+,n-n^-\}$''. 
\end{proof}

\begin{thm}\label{thm-bound10}
	Let $G$ be an $r$-regular simple mixed graph on $n$ vertices. Then $\displaystyle\alpha(G)\leq\frac{-n\bm{\lambda}_{2n}(G)}{r-\bm{\lambda}_{2n}(G)}.$
\end{thm}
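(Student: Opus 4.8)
The plan is to transfer the problem to the associated graph $G^A$ and apply the classical Hoffman ratio bound, mirroring the strategy used for the preceding bounds in this section.

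First I would use Lemma~\ref{regular} to conclude that $G^A$ is an $r$-regular simple graph. Since $G$ has $n$ vertices, $G^A$ has $2n$ vertices, and because $\mathcal{I}(G)=A(G^A)$ the two matrices share the same spectrum; in particular the least eigenvalue of $A(G^A)$ equals $\bm{\lambda}_{2n}(G)$. Next I would apply the Hoffman ratio bound for regular graphs (as recorded in \cite{cvetkovic}) to $G^A$, which gives
\[
\alpha(G^A)\le \frac{-2n\,\bm{\lambda}_{2n}(G)}{r-\bm{\lambda}_{2n}(G)}.
\]
Finally, Lemma~\ref{alphalemma} yields $2\alpha(G)\le\alpha(G^A)$; combining this with the displayed inequality and dividing by $2$ gives the claim.

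The computation is essentially immediate once the reduction is set up: the factor $2$ coming from the $2n$ vertices of $G^A$ cancels exactly against the factor $2$ appearing in Lemma~\ref{alphalemma}, so no monotonicity argument in $\bm{\lambda}_{2n}(G)$ is required. The only point needing attention is confirming that $r-\bm{\lambda}_{2n}(G)>0$, so that dividing through preserves the direction of the inequality. This is precisely where Theorem~\ref{r-regular adjacency} enters, since it identifies $r=\bm{\lambda}_1(G)$ as the largest eigenvalue of $\mathcal{I}(G)$, while $\bm{\lambda}_{2n}(G)$ is by convention the smallest; thus the denominator is positive whenever $G$ carries at least one edge or arc, which is the only case in which the bound is meaningful.
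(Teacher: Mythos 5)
Your proposal is correct and follows essentially the same route as the paper's own proof: pass to the associated graph $G^A$ via Lemma~\ref{regular}, apply the Hoffman-type ratio bound from \cite{cvetkovic} together with $\bm{\lambda}_1(G)=r$ from Theorem~\ref{r-regular adjacency}, and finish with Lemma~\ref{alphalemma}. The factor-of-two cancellation you describe is exactly how the paper's argument works, so nothing further is needed.
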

\begin{proof}
	From Lemma~\ref{regular}, $G^A$ is an $r$-regular simple graph on $2n$ vertices. As per Theorem~\ref{r-regular adjacency}, we have $\lambda_1(G)=r$. Since  $\mathcal{I}(G)$ and $A(G^A)$ have the same spectrum, the proof follows from Lemma~\ref{alphalemma} and \cite[Theorem~3.10.2]{cvetkovic}: ``If $G$ is a simple regular graph on $n$ vertices, then  $\alpha(G)\leq\frac{-n\lambda_{n}(G)}{\lambda_1(G)-\lambda_{n}(G)}$''.
\end{proof}

\begin{defn}\normalfont
	Let $G$ be a simple mixed graph. A \textit{clique} of $G$ is a complete submixed graph of $G$. The \textit{clique number} of $G$, denoted by $\omega(G)$, is the number of vertices in a largest clique of $G$.
\end{defn}
\begin{thm}
	Let $p^-,p^0$ and $p^+$ denote the number of  $\mathcal{I}$-eigenvalues of a simple mixed graph $G$ which are less than, equal to and greater than $0$, respectively and let $q^-,q^0$ and $q^+$ denote the number of $\mathcal{I}$-eigenvalues of $G$ which are less than, equal to and greater than $-2$, respectively. Let $s=\min \left\{1+\frac{\bm{\lambda}_1(G)}{2},\frac{p^0+p^-+1}{2},p^0+p^+-1,\right.$ $\left.q^0+q^-+1,\frac{q^0+q^+}{2}\right\}$. Then $\omega(G)\leq s$.
\end{thm}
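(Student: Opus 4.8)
The plan is to realize a largest clique of $G$ as an induced complete mixed graph, transport the problem to the associated graph via the identity $\mathcal{I}(G)=A(G^A)$, and then read off all five bounds from a single interlacing computation. The conceptual content is the observation that a clique, being a complete submixed graph, is forced to be a copy of $K^M_\omega$ once $G$ is simple, so its $\mathcal{I}$-spectrum is already known from Section~\ref{S5 spectra}.

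First I would set $\omega=\omega(G)$ and let $H$ be the induced submixed graph of $G$ on the vertex set of a largest clique. Since $G$ is simple and the clique is a complete submixed graph, no additional edges or arcs can occur among these vertices, so $H=K^M_\omega$. By Theorem~\ref{thm-interlace} the $\mathcal{I}$-eigenvalues of $H$ interlace those of $G$; writing the eigenvalues in decreasing order, Cauchy interlacing gives
\begin{equation*}
	\bm{\lambda}_i(G)\ge\bm{\lambda}_i(H)\ge\bm{\lambda}_{i+2n-2\omega}(G),\qquad i=1,2,\ldots,2\omega.
\end{equation*}
Taking $m=1$ and $k=\omega$ in Theorem~\ref{thm-adj-complete-kpartite}(i) shows that the $\mathcal{I}$-spectrum of $H=K^M_\omega$ is $2\omega-2$ with multiplicity $1$, $0$ with multiplicity $\omega$, and $-2$ with multiplicity $\omega-1$. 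Hence $\bm{\lambda}_1(H)=2\omega-2$, $\bm{\lambda}_2(H)=\cdots=\bm{\lambda}_{\omega+1}(H)=0$, and $\bm{\lambda}_{\omega+2}(H)=\cdots=\bm{\lambda}_{2\omega}(H)=-2$.

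Each quantity in $s$ then corresponds to counting eigenvalues of $H$ on one side of $0$ or of $-2$ and pushing that count into $G$ through the appropriate side of the interlacing inequality. The inequality $2\omega-2=\bm{\lambda}_1(H)\le\bm{\lambda}_1(G)$ gives $\omega\le 1+\frac{1}{2}\bm{\lambda}_1(G)$. From $0=\bm{\lambda}_{\omega+1}(H)\le\bm{\lambda}_{\omega+1}(G)$ at least $\omega+1$ eigenvalues of $\mathcal{I}(G)$ are $\ge 0$, so $\omega\le p^0+p^+-1$; dually $0=\bm{\lambda}_2(H)\ge\bm{\lambda}_{2n-2\omega+2}(G)$ forces at least $2\omega-1$ eigenvalues $\le 0$, so $\omega\le\frac{1}{2}(p^0+p^-+1)$. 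Repeating the device at the level $-2$, the bound $-2=\bm{\lambda}_{2\omega}(H)\le\bm{\lambda}_{2\omega}(G)$ makes all $2\omega$ top eigenvalues of $G$ at least $-2$, yielding $\omega\le\frac{1}{2}(q^0+q^+)$, while $-2=\bm{\lambda}_{\omega+2}(H)\ge\bm{\lambda}_{2n-\omega+2}(G)$ produces at least $\omega-1$ eigenvalues $\le-2$, giving $\omega\le q^0+q^-+1$. Taking the minimum of the five inequalities yields $\omega(G)\le s$.

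I expect the only real obstacle to be bookkeeping rather than any genuine difficulty: one must get the index shift $i+2n-2\omega$ in the lower interlacing inequalities exactly right so that the two counts come out to be $2\omega-1$ and $\omega-1$, and one should separately verify the degenerate case $\omega=1$, where the $-2$ block of $H$ is empty and the value $2\omega-2=0$ merges with the zero eigenvalues. In that case the interlacing arguments for the $-2$ bounds become vacuous, but the corresponding inequalities $\omega\le q^0+q^-+1$ and $\omega\le\frac{1}{2}(q^0+q^+)$ hold trivially, so none of the five estimates is violated.
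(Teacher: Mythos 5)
Your proof is correct and takes essentially the same route as the paper's: identify the clique (of size $k$, resp.\ $\omega$) as a copy of $K^M_\omega$ whose integrated adjacency matrix is $J_2\otimes(J_\omega-I_\omega)$ with spectrum $2(\omega-1)$, $0$ (multiplicity $\omega$), $-2$ (multiplicity $\omega-1$), apply the interlacing result (Theorem~\ref{thm-interlace}), and read off the five bounds exactly as you do. The only cosmetic differences are that the paper argues for an arbitrary clique on $k$ vertices and then maximizes over $k$, and it does not separately discuss the degenerate case $\omega=1$, which you treat explicitly.
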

\begin{proof}
	Suppose that $G$ contains a clique $H$ on $k\geq 1$ vertices. Then $\mathcal{I}(H)$ is of the form $J_2\otimes(J_k-I_k)$.
	Since $H$ is an induced submixed graph of $G$, by Theorem~\ref{thm-interlace}, the eigenvalues of $\mathcal{I}(H)$ interlace the eigenvalues of $\mathcal{I}(G)$. Since the eigenvalues of $J_2\otimes(J_k-I_k)$ are $2(k-1)$ with multiplicity $1$, $0$ with multiplicity $k$, and $-2$ with multiplicity $k-1$, we have the following:
	\begin{eqnarray}\label{1.3}
		&\bm{\lambda}_{2n-2k+1}(G)\leq 2(k-1)\leq \bm{\lambda}_1(G);\\\label{1.4}
		&\bm{\lambda}_{2n-2k+i}(G)\leq 0\leq \bm{\lambda}_i(G)~\text{for}~i=2,\ldots,k+1;\\\label{1.5}
		&\bm{\lambda}_{2n-2k+i}(G)\leq -2\leq \bm{\lambda}_i(G)~\text{for}~i=k+2,\ldots,2k.
	\end{eqnarray}
	From the second inequality of \eqref{1.3}, we get $k\leq 1+\displaystyle\frac{\bm{\lambda}_1(G)}{2}$.
	By \eqref{1.4}, we get $k\leq \displaystyle\frac{p^0+p^-+1}{2}$ and $k\leq p^0+p^+-1$.
	From \eqref{1.5}, we get $k\leq q^0+q^-+1$ and $k\leq \displaystyle\frac{q^0+q^+}{2}$. Therefore, we obtain $k\leq s$. Since $k$ is arbitrary, we get $\omega(G)\leq s$, as desired.	
\end{proof}
\section{Concluding remarks}
The integrated adjacency matrix of a mixed graph, introduced in this paper, accommodates entries corresponding to multiple edges, multiple arcs, and multiple loops. Furthermore, any mixed graph can be uniquely determined from its integrated adjacency matrix. We have established results that reveal the interplay between the eigenvalues of the integrated adjacency matrix of a mixed graph and its structural properties.

It is evident that some of the spectral graph theoretic results for simple graphs, utilized in this paper, can be extended to graphs with loops and/or multiple edges. Consequently, Theorems~\ref{thm-interlace}, \ref{r-regular adjacency}, \ref{simple eigenvalue}, \ref{thm-bound1}, \ref{thm-bound2}, \ref{thm-bound5}, \ref{thm-bound6}, \ref{thm-bound7}, \ref{ineq}, and \ref{thm-bound10}, which were proved for simple mixed graphs, can also be extended to mixed graphs with loops, directed loops, multiple edges, and/or multiple arcs. Additionally, Theorem~\ref{entry} can be extended to loopless mixed graphs. Furthermore, in part~II of this paper~\cite{Kalaimatrices2}, we introduce and study three additional matrices for mixed graphs: integrated Laplacian matrix, integrated signless Laplacian matrix, and normalized integrated Laplacian matrix, all defined using the integrated adjacency matrix.

\end{document}